\newtheorem{thm}{Theorem}[section]
\newtheorem{lem}[thm]{Lemma}
\newtheorem{cor}[thm]{Corollary}
\newtheorem{prop}[thm]{Proposition}
\newtheorem{fct}[thm]{Fact}
\newtheorem{con}{Conjecture}
\newtheorem{qu}[con]{Question}
\theoremstyle{remark}
\newtheorem*{rem}{Remark}
\newtheorem*{rems}{Remarks}
\theoremstyle{definition}
\newtheorem*{dfn}{Definition}
\newtheorem*{clm}{Claim}
\newenvironment{clmproof}[1][\proofname]{\proof[#1]}{\endproof}
\newtheorem{ex}[thm]{Example}
\DeclareMathOperator{\rng}{{rng}}
\DeclareMathOperator{\Th}{{Th}}
\DeclareMathOperator{\tp}{{tp}}
\DeclareMathOperator{\Aut}{{Aut}}
\DeclareMathOperator{\dcl}{{dcl}}
\DeclareMathOperator{\Core}{{Core}}
\newcommand{\mon}{\mathfrak C}
\newcommand{\proves}{\mathrel{\vdash}}
\newcommand{\liff}{\mathrel{\leftrightarrow}}
\newcommand{\limplies}{\mathrel{\rightarrow}}
\newcommand{\bigland}{\bigwedge}
\newcommand{\biglor}{\bigvee}
\newcommand{\Autf}[1]{\operatorname{Aut\,f}_{#1}}
\newcommand{\G}{{\mathcal G}}
\newcommand{\Z}{{\bf Z}}
\newcommand{\N}{{\bf N}}
\newcommand{\R}{{\bf R}}
\newcommand{\KP}{{\mathit{KP}}}
\newcommand{\restr}{\mathord{\upharpoonright}}
\newcommand{\wt}[1]{\widetilde{#1}}
\newcommand{\EZ}{\mathrel{ { {\bf E}_0 } } }
\newcommand{\Er}{\mathrel{E}}
\newcommand{\Fr}{\mathrel{F}}
\begin{document}

\address{
Instytut Matematyczny, Uniwersytet Wrocławski,
pl. Grunwaldzki 2/4, 50-384 Wrocław, Poland
}

\author{Krzysztof Krupi\'{n}ski}
\email[K.\ Krupi\'{n}ski]{kkrup@math.uni.wroc.pl}
\thanks{The first author is supported by NCN grant 2012/07/B/ST1/03513}

\author{Tomasz Rzepecki}
\email[T.\ Rzepecki]{tomasz.rzepecki@math.uni.wroc.pl}
\thanks{The paper is based on the Master's thesis of the second author.}

\keywords{bounded invariant equivalence relations, Borel cardinality, model-theoretic connected components}
\subjclass[2010]{03C45; 03E15; 03C60}

\title{Smoothness of bounded invariant equivalence relations}

\begin{abstract}
We generalise the main theorems from the paper ``The Borel cardinality of Lascar strong types'' by I.\ Kaplan, B.\ Miller and P.\ Simon to a wider class of bounded invariant equivalence relations. We apply them to describe relationships between fundamental properties of bounded invariant equivalence relations (such as smoothness or type-definability) which also requires finding a series of counterexamples. Finally, we apply the generalisation mentioned above to prove a conjecture from a paper by the first author and J. Gismatullin, showing that the key technical assumption of the main theorem (concerning connected components in definable group extensions) from that paper is not only sufficient but also necessary to obtain the conclusion. 
\end{abstract}

\maketitle

\section{Introduction}
\subsection{Preface}This paper will concern the Borel cardinalities of bounded, invariant equivalence relations, as well as some weak analogues in an uncountable case. More precisely, we are concerned with the connection between type-definability and smoothness of these relations -- type-definable equivalence relations are always smooth (cf.\ Fact~\ref{fct:tdsmt}), while the converse is not true in general. 
We also apply this to the study of connected components in definable group extensions.

The general motivation for the use of Borel cardinality in the context of bounded invariant equivalence relations is a better understanding of ``spaces'' of strong types (i.e., ``spaces'' of classes of such relations). For a bounded type-definable equivalence relation, its set of classes, equipped with the so-called logic topology, forms a compact Hausdorff topological space. However, for relations which are only invariant, but not type-definable, the logic topology is not necessarily Hausdorff, so it is not so useful. The question arises how to measure the complexity of the spaces of classes of such relations. One of the ideas is to investigate their Borel cardinalities, which was formalised in \autocite{KPS13}, wherein the authors asked whether the Lascar strong type must be non-smooth if it is not equal to the Kim-Pillay strong type. This question was answered in the positive in \autocite{KPS13}, and in this paper, we generalise its methods to a more general class of invariant equivalence relations, and we find an important application in the context of definable group extensions.

There are four main results:
\begin{enumerate}
\item
Theorem~\ref{thm:mainA}, a technical statement showing that some invariant equivalence relations are not smooth, which is proved by a simple modification of the proof of the main result of \autocite{KMS14} (Very similar results have been since shown in \autocite{KM14} using different -- though not unrelated -- methods, although it should be noted that the preprint of the latter was circulated after the proof of Theorem~\ref{thm:mainA} presented here was found by the authors.);
\item
Theorem~\ref{thm:mainAu} (which is an uncountable analogue of Theorem~\ref{thm:mainA}) and its Corollary~\ref{cor:mainGu}; again, it is obtained by a modification of a result of \autocite{KMS14}, although in this case it is somewhat more elaborate; this theorem contains some completely new information concerning the notion of sub-Vietoris topology introduced in this paper, which is essential for the application to definable group extensions in the last section of the paper;
\item
Theorem~\ref{thm:mainB}, in which we attempt to analyse in detail the connection between smoothness, type-definability and some other properties of bounded and invariant equivalence relations, under some additional assumptions; it uses a corollary of Theorem~\ref{thm:mainA} to show that some of these properties are stronger than others, and several (original) examples to show that they are not equivalent;
\item
Theorem~\ref{thm:mainext}, which applies Corollary~\ref{cor:mainGu} along with some ideas from \autocite{GK13} and \autocite{KPS13} in the context of definable group extensions, in order to give a criterion for type-definability of subgroups of such extensions, resulting in a proof of important technical conjectures (see  Conjectures~\ref{con:gk211} and~\ref{con:gk210} in the last section) from \autocite{GK13} in Corollary~\ref{cor:conpr}; the motivation for these conjectures is recalled in the remark following them.
\end{enumerate}

The main results discussed above are included in Sections 3, 4 and 5. The second section develops the necessary framework upon which we will base the part that comes after it -- the language in which we express the sequel. In particular, we introduce  the notions of orbital and orbital on types equivalence relations, as well as the notion of a normal form.

\subsection{Conventions}
In the following, unless otherwise stated, we assume that we have a fixed complete theory $T$  with infinite models. (The theory may be multi-sorted, and it will, of course, vary in some specific examples.)

We also fix a monster model $\mon\models T$, that is, a model which is $\kappa$-saturated and strongly $\kappa$-homogeneous for $\kappa$ a sufficiently large cardinal (and whenever we say ``small'' or ``bounded'', we mean smaller than this $\kappa$). If we assume that there is a sufficiently large and strongly inaccessible cardinal $\kappa$, we can take for $\mon$ the saturated model of cardinality $\kappa$.
We say that an equivalence relation on a product of sorts of $\mon$ is bounded if its number of classes is bounded.

We assume that all parameter sets are contained in $\mon$, every model we consider is an elementary substructure of $\mon$, and every tuple is of small length.
Often, we will denote by $M$ an arbitrary, but fixed small model.

For a small set $A\subseteq \mon$, by $A$-invariant we mean $\Aut(\mon/A)$-invariant.

For simplicity, whenever we mention definable, type-definable or invariant sets, we mean that they are (unless otherwise stated) \emph{$\emptyset$-definable}, \emph{$\emptyset$-type-definable} or \emph{$\emptyset$-invariant}, respectively.

When talking about tuples of elements of $\mon$, we will often say that they are in $\mon$ (as opposed to some product of various sorts of $\mon$), without specifying the length, when it does not matter or there is no risk of confusion. Likewise, we will often write $X\subseteq \mon$ when $X$ is a subset of some product of sorts of $\mon$.

If $X$ is some $A$-invariant set (esp.\ type-definable over $A$), we will denote by $S_X(A)$ the set of complete $A$-types of elements of $X$, and similarly we will sometimes omit $X$ (or names of sorts in multi-sorted context) in $S_X(A)$, and write simply $S(A)$ instead.

Throughout the paper, formulas and types will be routinely identified with the corresponding subsets of $\mon$, as well as the corresponding subsets of type spaces (or points, in case of complete types). Similarly, invariant sets will be identified with subsets of type spaces and equivalent $L_{\infty,\omega}$ formulas. For example, if $X\subseteq \mon$ is an $A$-invariant set, then we will identify $X$ with $\biglor_{i\in I} \bigland_{j\in J} \varphi_{i,j}(x,A)$ (where $I,J$ are possibly infinite index sets and $\varphi_{i,j}$ are first order formulas) if we have 
\[
x\in X\iff\mon\models \biglor_{i\in I} \bigland_{j\in J} \varphi_{i,j}(x,A).
\]
In this case, we also associate with $X$ the subset $X_A=\{\tp(a/A)\mid a\in X\}$ of $S(A)$; when $A=\emptyset$, and there is no risk of confusion, we will sometimes simply write $X$ instead of $X_\emptyset$.

When metrics are mentioned, they are binary functions into $[0,\infty]=\R_{\geq 0}\cup \{\infty\}$ satisfying the usual axioms (coincidence axiom, symmetry and triangle inequality), but in particular, they are allowed to (and usually will) attain $\infty$.

\subsection{Preliminaries}
It is assumed that the reader is familiar with basic concepts of model theory (e.g.\ compactness, definable sets, type-definable sets, type spaces, saturated models, indiscernible sequences) and descriptive set theory (e.g.\ Polish spaces, standard Borel spaces, Borel classes).

Furthermore, we will also use some well-known (but less widely known) facts and terms related to the following subjects.
\begin{enumerate}
\item
Borel cardinalities of Borel equivalence relations (\autocite[in particular Chapter 5]{Kan08}, \autocite[esp.\ Chapter 3]{BK96}). For a concise exposition of fundamental issues concerning this topic, the reader is referred to the preliminary sections in \autocite{KPS13} or \autocite{KMS14}. Let us only recall here that for Borel equivalence relations $E$ and $F$ on Polish (or, more generally, standard Borel) spaces $X$ and $Y$, respectively, we say that $E$ is Borel reducible to $F$, or that the Borel cardinality of $E$ is less than or equal to the Borel cardinality of $F$ (symbolically $E\leq_BF$) if there is a Borel reduction from $E$ to $F$, i.e.\ a Borel function $f : X \to Y$ such that $x_0 \Er x_1 \iff f(x_0) \Fr f(x_1)$ for all $x_0,x_1 \in X$; the relations $E$ and $F$ are Borel bireducible, or of the same Borel cardinality (symbolically $E \sim_B F$) if $E \leq_B F$ and $F \leq_B E$. The relation $E$ is smooth if $E \leq _B \Delta(Z)$, where $Z$ is a Polish space and $\Delta(Z)$ is the equality on $Z$.
\item
Strong Choquet topological spaces (\autocite{Kec95}).
\item
Lascar and Kim-Pillay strong types (\autocite[esp.\ first section]{CLPZ01}). In this paper, the relation of having the same Lascar strong type (i.e.\ the finest bounded invariant equivalence relation) will be denoted by $\equiv_L$, and the relation of having the same Kim-Pillay strong type (i.e.\ the finest bounded type-definable equivalence relation) will be denoted by $\equiv_{\KP}$.
\item
Model-theoretic connected group components (\autocite{GN08} and \autocite[first two sections]{Gis11}). Recall that for a group $G$ definable in the monster model, $G^{000}$ denotes the smallest invariant subgroup of bounded index, and $G^{00}$ -- the smallest type-definable subgroup of bounded index (both considered here without parameters, as explained in the introduction).
\item
The logic topology (\autocite[Section 2]{Pil04}). Recall that when $E$ is a type-definable equivalence relation on a type-definable set $X$, then a subset $D \subseteq X/E$ is closed if and only if its preimage by the quotient map is type-definable with parameters.
\end{enumerate}

\section{Framework}

\subsection{Bounded invariant equivalence relations}
In this chapter, we extend the theory of Borel cardinality of Lascar strong types as considered in \autocite{KPS13} to general invariant and bounded equivalence relations, to provide a uniform way of viewing bounded, invariant equivalence relations as relations on topological spaces, which will be standard Borel spaces in the countable case.

\begin{dfn}
Suppose $P$ is a product of sorts of $\mon$. We say that $P$ is \emph{countable}  if it is a product of countably many sorts.
\end{dfn}

\begin{dfn}
Suppose $X$ is a subset of some product of sorts $P$. Then we say that $P$ is the \emph{support} of $X$, and we say that $X$ is \emph{countably supported} if $P$ is countable (according to the preceding definition), and, more generally, say that it is \emph{$\lambda$-supported} for a cardinal $\lambda$ if $P$ is a $\lambda$-fold product.
\end{dfn}

\begin{dfn}[Borel invariant set, Borel class of an invariant set]
For any invariant set $X$, we say that $X$ is \emph{Borel} if the corresponding subset of $S(\emptyset)$ is, and in this case by \emph{Borel class} of $X$ we mean the Borel class of the corresponding subset of $S(\emptyset)$ (e.g.\ we say that $X$ is $F_\sigma$ if the corresponding set in $S(\emptyset)$ is $F_\sigma$, and we might say that $X$ is clopen if the corresponding subset of $S(\emptyset)$ is clopen, i.e.\ if $X$ is definable).

Similarly if $X$ is $A$-invariant, we say that it is \emph{Borel over $A$} if the corresponding subset of $S(A)$ is (and Borel class is understood analogously).

We say that a set is \emph{pseudo-closed} if it is closed over some small set (equivalently, if it is type-definable with parameters from a small set).
\end{dfn}

\begin{rem}
Notice that if both the language and $A$ are countable and $X$ is countably supported and Borel over $A$, then $S_X(A)$ -- endowed with the $\sigma$-algebra generated by formulas over $A$ -- is a standard Borel space.
\end{rem}

We will use the following descriptive-set-theoretic lemma several times.
\begin{lem}[{\autocite[Exercise 24.20]{Kec95}}]
\label{lem:cons}
Suppose $X,Y$ are compact, Polish spaces and $f\colon X\to Y$ is a continuous, surjective map. Then $f$ has a Borel section, so in particular, for any $B\subseteq Y$, $f^{-1}[B]$ is Borel if and only if $B$ is. Moreover, if they are Borel, then the two are of the same Borel class.
\end{lem}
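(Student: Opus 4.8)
The plan is to establish the three assertions in turn, deriving the second and third from an explicit Borel section together with a transfinite induction on Borel rank.

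First I would produce the section via the Kuratowski--Ryll-Nardzewski selection theorem. Since $f$ is continuous and $X$ is compact, each fibre $f^{-1}(\{y\})$ is a nonempty (by surjectivity), compact, hence closed, subset of $X$, so $y\mapsto f^{-1}(\{y\})$ is a multifunction with closed values. To invoke the selection theorem it suffices to check that, for every open $U\subseteq X$, the set $\{y\in Y : f^{-1}(\{y\})\cap U\neq\emptyset\}=f[U]$ is Borel; but $U$ is $\sigma$-compact (an open subset of a compact metric space is a countable union of compact sets), so $f[U]$ is a countable union of compact sets, hence $F_\sigma$. The theorem then yields a Borel map $s\colon Y\to X$ with $s(y)\in f^{-1}(\{y\})$, i.e.\ $f\circ s=\id_Y$. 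The equivalence ``$f^{-1}[B]$ is Borel if and only if $B$ is'' is then immediate: if $B$ is Borel then so is $f^{-1}[B]$ because $f$ is continuous, while if $f^{-1}[B]$ is Borel then $B=s^{-1}[f^{-1}[B]]$ (using $f\circ s=\id_Y$) is Borel because $s$ is.

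For the refinement that the two sets have the \emph{same} Borel class, one inequality is free: as $f$ is continuous, $f^{-1}$ sends open to open and closed to closed and commutes with complements, countable unions and countable intersections, so $B\in\Sigma^0_\xi$ (resp.\ $\Pi^0_\xi$) forces $f^{-1}[B]$ into the same class. For the reverse I would argue by transfinite induction on $\xi$, working with the \emph{saturated} sets, i.e.\ those of the form $A=f^{-1}[f[A]]=f^{-1}[B]$. The structural facts I would exploit are that, because $f$ is a continuous closed surjection of compact spaces, $A\mapsto f[A]$ is a bijection from the saturated subsets of $X$ onto all subsets of $Y$, with inverse $B\mapsto f^{-1}[B]$; that this bijection commutes with complements (here $Y\setminus f[A]=f[X\setminus A]$ for saturated $A$), with countable unions and with countable intersections; and that it sends closed saturated sets to closed sets (by compactness) and open saturated sets to open sets (combining compactness with the complement identity). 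Pushing a $\Sigma^0_\xi$- (resp.\ $\Pi^0_\xi$-) presentation of $A=f^{-1}[B]$ through this isomorphism would then place $f[A]=B$ in the same class as $A$.

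The main obstacle is exactly this induction. A presentation $A=\bigcup_n C_n$ of a saturated $\Sigma^0_\xi$ set need not have saturated pieces, and although each saturation $f^{-1}[f[C_n]]$ is still contained in $A$, passing to it can raise the Borel class (continuous images of $G_\delta$ sets need not even be Borel), so the isomorphism cannot be applied term by term. The point of the argument is therefore to reorganise the induction so that at every level one only ever takes images of closed sets (controlled by compactness) or, dually, ``co-images'' $Y\setminus f[X\setminus\,\cdot\,]$ of open sets (controlled because $f$ is closed), using that these two operations coincide on saturated sets. Showing that a saturated set of additive (resp.\ multiplicative) class $\xi$ always admits a presentation built from saturated sets of lower class --- so that this bookkeeping closes --- is the delicate step, and it is precisely here that the compactness of $X$ and the closedness of $f$ are indispensable.
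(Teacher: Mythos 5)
The paper offers no proof of this lemma at all --- it is quoted directly from Kechris with the citation [Exercise 24.20] --- so your attempt has to be judged on its own merits rather than against an argument in the text. The first two claims are handled correctly: the fibres of $f$ are nonempty and closed, the set $\{y : f^{-1}(\{y\})\cap U\neq\emptyset\}=f[U]$ is $F_\sigma$ (open subsets of a compact metric space are $\sigma$-compact), so Kuratowski--Ryll-Nardzewski gives a Borel section $s$, and the equivalence ``$B$ is Borel iff $f^{-1}[B]$ is Borel'' follows from continuity in one direction and from $B=s^{-1}\bigl[f^{-1}[B]\bigr]$ in the other.

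The ``moreover'' clause, however, is not actually proved. You correctly isolate the difficulty --- the pieces of a $\Sigma^0_\xi$-presentation of $f^{-1}[B]$ need not be saturated, and saturating them can wreck the class --- but the step you then defer, namely that a saturated set of additive class $\xi$ admits a presentation by saturated sets of lower class, is (via the very bijection between saturated subsets of $X$ and subsets of $Y$ that you set up) \emph{equivalent} to the statement being proved: if $B\in\Sigma^0_\xi(Y)$ then writing $B=\bigcup_n B_n$ with $B_n\in\Pi^0_{\xi_n}$ and pulling back gives exactly such a presentation, and conversely such a presentation pushes forward to one for $B$. So the ``delicate step'' is not a loose end of your argument; it is the entire content of the hard direction, and no argument for it is supplied. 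The genuinely elementary cases are $\xi=1$ (where $f$ closed gives $B=Y\setminus f[X\setminus f^{-1}[B]]$ open) and $\xi=2$ (where $B=\bigcup_n f[C_n]$ with $C_n$ closed and each $f[C_n]$ compact, hence closed); already at $\xi=3$ the forward image of a $\Pi^0_2$ piece need not be Borel, so one needs an additional, separately proved covering lemma (roughly: every $\Pi^0_\eta$ subset of $f^{-1}[B]$ with $\eta\ge 2$ is contained in a saturated $\Pi^0_\eta$ subset of $f^{-1}[B]$), or else the Baire-class-one section that Kechris's exercise is really about. One of these is the missing ingredient, and without it the final sentence of the lemma remains unproved.
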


The next corollary says that, in the countable case, when $X$ is invariant over a countable model, we need not specify the parameter set in order to talk about the Borel class of $X$. It is a generalisation of a well-known fact for sets which are definable or type-definable with parameters.
\begin{cor}
\label{cor:bcomp}
Let $A,B$ be any small sets. Suppose $X$ is an $A$-invariant and $B$-invariant subset of a small product of sorts.  Then if the support of $X$, the language, $A$ and $B$ are all countable, then the Borel class of $X$ over $A$ is the same as the Borel class of $X$ over $B$ (in particular, $X$ is Borel over $A$ if and only if it is Borel over $B$).

Without assumptions of countability, if $X$ is closed or $F_\sigma$ over $A$, it is also closed or $F_\sigma$ (respectively) over $B$.
\end{cor}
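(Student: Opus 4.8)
The plan is to route everything through the larger parameter set $A\cup B$, using the fact that restriction of types is a continuous surjection between compact type spaces. Write $r_A\colon S(A\cup B)\to S(A)$ and $r_B\colon S(A\cup B)\to S(B)$ for the two restriction maps; both are continuous and surjective, their domain and codomains are compact, and in the countable case (countable language, countable $A\cup B$, countably supported $X$) all three spaces are second countable and Hausdorff, hence compact Polish. The key observation is that invariance turns these restriction maps into \emph{exact} preimages: since $X$ is $A$-invariant, membership of $a$ in $X$ depends only on $\tp(a/A)$, so $X_{A\cup B}=r_A^{-1}[X_A]$, and symmetrically $X_{A\cup B}=r_B^{-1}[X_B]$ from $B$-invariance. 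The forward inclusion is immediate; for the reverse, if $r_A(p)\in X_A$ then any realisation $b\models p$ has the same type over $A$ as some element of $X$, whence $b\in X$ by $A$-invariance, so $p\in X_{A\cup B}$.

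Granting these two identities, the countable case is immediate from Lemma~\ref{lem:cons}. Applying that lemma to $r_A$ shows that $X_A$ and $r_A^{-1}[X_A]=X_{A\cup B}$ are Borel simultaneously and of the same Borel class; applying it to $r_B$ does the same for $X_B$ and $X_{A\cup B}$. Chaining the two equalities of Borel classes through $X_{A\cup B}$ yields that $X_A$ and $X_B$ are of the same Borel class, which is exactly the first assertion.

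For the second assertion I would drop Lemma~\ref{lem:cons} (the spaces are no longer Polish) and argue directly, exploiting that a continuous map from a compact space to a Hausdorff space is closed. If $X$ is closed over $A$, then $X_A$ is closed, so $X_{A\cup B}=r_A^{-1}[X_A]$ is closed by continuity of $r_A$; then $X_B=r_B[X_{A\cup B}]$ (using surjectivity of $r_B$ together with $X_{A\cup B}=r_B^{-1}[X_B]$) is closed, since $r_B$ is a closed map. The $F_\sigma$ case follows by applying this to a countable union of closed sets witnessing that $X_A$ is $F_\sigma$, using that $r_A^{-1}$ preserves countable unions and that $r_B[-]$ sends the resulting closed pieces to closed sets. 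Symmetry in $A$ and $B$ then upgrades both one-directional statements to equivalences.

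The only genuinely delicate point is the identity $X_{A\cup B}=r_A^{-1}[X_A]$: it is precisely here that invariance (rather than mere definability over $A$) is used, and one must check that $X_A$ is exactly the image-and-saturation of $X_{A\cup B}$, not merely contained in its image. Everything else is a formal consequence of the topological properties of the restriction maps and of the cited lemma.
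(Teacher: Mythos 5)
Your proof is correct and follows essentially the same route as the paper's: the paper reduces to the case $A\subseteq B$ (which implicitly is your two-step passage through $A\cup B$), uses the same identity $f^{-1}[X_A]=X_B$ coming from invariance, and then invokes Lemma~\ref{lem:cons} for the countable case and closedness of continuous maps between compact Hausdorff spaces for the rest. Your write-up just makes the reduction and the exactness of the preimage explicit.
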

\begin{proof}
Without loss of generality, we can assume that $A\subseteq B$. Then the restriction map $f\colon S(B)\to S(A)$ is a continuous surjection, and $f^{-1}[X_A]=X_B$, so by Lemma~\ref{lem:cons}, we get the result for the first part.

The second part is true because $S(B)\to S(A)$ is continuous, as well as closed (as a continuous map between compact spaces).
\end{proof}

The following definition is somewhat self-explanatory, but since we are going to use it quite often, it should be stated explicitly.
\begin{dfn}
We say that an invariant equivalence relation $E$ on $X$ \emph{refines type} if for any $a,b\in X$ whenever $a\Er b$, then $a\equiv b$ (i.e.\ $\tp(a/\emptyset)=\tp(b/\emptyset)$). Equivalently, $E$ refines type if $E\subseteq {\equiv}\restr_X$.

Similarly, we say that $E$ \emph{refines Kim-Pillay strong type} $\equiv_{\KP}$ if $E\subseteq {\equiv_{\KP}}\restr_X$ and likewise we say that \emph{Kim-Pillay type refines $E$} if ${\equiv_{\KP}}\restr_X\subseteq E$.
\end{dfn}

The next definition is very important; it will be used to interpret a bounded, invariant equivalence relation $E$ as an abstract equivalence relation on a Polish space. It is a mild generalisation of $E_L^M$ and $E_{\KP}^M$ as introduced in \autocite{KPS13}.
\begin{dfn}
Suppose $E$ is a bounded, invariant equivalence relation on an invariant set $X$, while $M$ is a model.

Then we define $E^M\subseteq S_X(M)^2\subseteq S(M)^2$ as the relation
\[
p \Er^M q \iff \textrm{there are some $a\models p$ and $b\models q$ such that }a\Er b.
\]
(And the next proposition tells us that $E$-classes are $M$-invariant, so this is equivalent to saying that for all $a\models p,\, b\models q$ we have $a\Er b$, which implies that $\Er^M$ is an equivalence relation.)
\end{dfn}
The next proposition shows that $E^M$ is well-behaved in the sense explained in parentheses, and the Borel classes of $E^M$ and $E$ are the same in the countable case (which justifies the definition of Borel class of $E$ at the beginning of this subsection).
\begin{prop}[generalisation of {\autocite[Remark 2.2(i)]{KPS13}}]
\label{prop:eqcmp}
Consider a model $M$, and some bounded, invariant equivalence relation $E$ on an invariant subset $X$ of a product of sorts $P$.

Consider the natural restriction map $\pi\colon S_{P^2}(M)\to S_P(M)^2$ (i.e.\ \\ $\pi(\tp(a,b/M))=(\tp(a/M),\tp(b/M))$). Then we have the following facts:
\begin{itemize}
\item Each $E$-class is $M$-invariant, in particular, for any $a,b\in X$
\[a \Er b\iff \tp(a,b/M)\in {\Er}_M \iff \tp(a/M)\Er^M \tp(b/M)\]
and $\pi^{-1}[E^M]=E_M$.
\item
If one of $E^M$, $E_M$, $E$ (considered as a subset of $S_{P^2}(\emptyset)$) is closed or $F_\sigma$, then all of them are closed or $F_\sigma$ (respectively). In the countable case (when the support of $E$, the language and $M$ are all countable), we have more generally that the Borel classes of $E^M, E_M, E$ are all the same.
\item
Similarly -- for $M$-invariant $Y\subseteq X$ -- the relation $E^M\restr_{Y_M}$ is closed or $F_\sigma$ [or Borel in the countable case] if and only if $E_M\cap (Y^2)_M$ is.
\end{itemize}

\end{prop}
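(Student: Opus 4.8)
The plan is to reduce all three bullets to two ingredients: the model-theoretic fact that classes of a bounded invariant equivalence relation are invariant over any model, and the descriptive-set-theoretic transfer results already in hand (Corollary~\ref{cor:bcomp} and Lemma~\ref{lem:cons}), applied to the continuous surjections $\pi$ and the various restriction maps.

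First I would establish the $M$-invariance of classes, which underlies the first bullet. Since $E$ is bounded and invariant, it is refined by $\equiv_L$ (the finest bounded invariant equivalence relation), so $a\equiv_L b$ implies $a\Er b$. Because any two elements with the same type over a model have the same Lascar strong type, it follows that $\tp(a/M)=\tp(b/M)$ implies $a\Er b$; taking $b=\sigma(a)$ for $\sigma\in\Aut(\mon/M)$ shows $\sigma([a]_E)=[a]_E$, i.e.\ each class is $M$-invariant. The chain of equivalences is then routine: $a\Er b$ trivially gives $\tp(a,b/M)\in E_M$; conversely, a witness $(a',b')\in E$ with $\tp(a',b'/M)=\tp(a,b/M)$ can be moved to $(a,b)$ by some $\sigma\in\Aut(\mon/M)$ using strong homogeneity of $\mon$, and invariance of $E$ yields $a\Er b$. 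Unfolding the definition of $\Er^M$ and replacing ``some realizations'' by $a,b$ themselves (legitimate precisely by $M$-invariance of classes) gives the equivalence with $\tp(a/M)\Er^M\tp(b/M)$. Since every point of $S_{P^2}(M)$ is of the form $\tp(a,b/M)$ and $\pi(\tp(a,b/M))=(\tp(a/M),\tp(b/M))$, this is exactly the identity $\pi^{-1}[E^M]=E_M$.

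For the second bullet I would split the comparison into two halves. The equivalence between $E$ (as a subset of $S_{P^2}(\emptyset)$) and $E_M$ is literally Corollary~\ref{cor:bcomp} applied to the invariant set $E\subseteq P^2$, which is invariant over both $\emptyset$ and $M$: this gives the same Borel class in the countable case and transfer of closed/$F_\sigma$ in general. For the equivalence between $E_M$ and $E^M$ I would use $\pi^{-1}[E^M]=E_M$ together with the fact that $\pi$ is a continuous surjection of the compact space $S_{P^2}(M)$ onto $S_P(M)^2$ (Polish in the countable case). In the countable case Lemma~\ref{lem:cons} gives directly that $E_M=\pi^{-1}[E^M]$ and $E^M$ have the same Borel class. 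In general, continuity handles the forward direction, while for the backward direction I would use that $\pi$ is a \emph{closed} map (a continuous map from a compact space into a Hausdorff space), so from $E^M=\pi[\pi^{-1}[E^M]]=\pi[E_M]$ one reads off that $E^M$ is closed, respectively $F_\sigma$, whenever $E_M$ is. Composing the two halves yields the claim for all three objects.

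The third bullet follows by the same machinery restricted to $Y$: using $M$-invariance of $Y$ one checks $\pi^{-1}[Y_M^2]=(Y^2)_M$, hence $\pi^{-1}[E^M\restr_{Y_M}]=\pi^{-1}[E^M]\cap\pi^{-1}[Y_M^2]=E_M\cap (Y^2)_M$, and the continuity/closedness of $\pi$ (or Lemma~\ref{lem:cons} in the countable case) transfers closed/$F_\sigma$/Borel between the two sides exactly as before. I expect the main obstacle to be the bookkeeping of the first bullet — verifying the chain of equivalences and the identity $\pi^{-1}[E^M]=E_M$ — since everything there depends on correctly invoking $M$-invariance of classes and strong homogeneity of $\mon$; once these set-theoretic identities are secured, the topological transfer is a direct application of the cited lemma and corollary.
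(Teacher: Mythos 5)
Your proposal is correct and follows essentially the same route as the paper: the first bullet via the refinement chain (equivalence over $M$ refines $\equiv_L$ refines $E$), the second via Corollary~\ref{cor:bcomp}, the continuity and closedness of $\pi$ and the restriction maps, and Lemma~\ref{lem:cons} in the countable case, and the third via the identity $\pi^{-1}[E^M\restr_{Y_M}]=E_M\cap (Y^2)_M$. You simply spell out in more detail the bookkeeping that the paper leaves implicit.
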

\begin{proof}
For the first bullet, notice that $E$ is refined by (a restriction of) Lascar strong type (cf.\ \autocite[Fact 1.4]{CLPZ01}), which in turn is refined by equivalence over $M$ (for any model $M$, cf.\ \autocite[Fact 1.12]{CLPZ01}), and therefore any points equivalent over $M$ are also Lascar equivalent, and hence $E$-equivalent.

The second bullet is similar to Corollary~\ref{cor:bcomp}: it is a consequence of the fact that $\pi$ and the restriction map $S_{P^2}(M)\to S_{P^2}(\emptyset)$ are both continuous and closed (because $S_{P^2}(M)$ is compact). For the countable case, we use Lemma~\ref{lem:cons}.

The last part follows analogously, as $\pi^{-1}[E^M\restr_{Y_M}]=E_M\cap (Y^2)_M$.
\end{proof}

The next two facts will be used in conjunction with Corollary~\ref{cor:bcomp} to show that some $E$-saturated sets (where $E$ is a bounded, invariant equivalence relation) are closed or $F_\sigma$ over any model $M$.

\begin{cor}
\label{cor:satinv}
If $E$ is a bounded, invariant equivalence relation on $X$ and $Y\subseteq X$ is $E$-saturated (i.e.\ containing any $E$-class intersecting it), then for any model $M$, $Y$ is $M$-invariant.
\end{cor}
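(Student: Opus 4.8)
The plan is to reduce the $M$-invariance of $Y$ to the $M$-invariance of individual $E$-classes, which is already granted by the first bullet of Proposition~\ref{prop:eqcmp}. Accordingly, I would fix an arbitrary $\sigma\in\Aut(\mon/M)$ together with an arbitrary $a\in Y$, and show that $\sigma(a)\in Y$.

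First, since $\sigma$ fixes $M$ pointwise, we have $\tp(a/M)=\tp(\sigma(a)/M)$. By Proposition~\ref{prop:eqcmp} each $E$-class is $M$-invariant, which is the same as saying that elements of the same type over $M$ are $E$-equivalent; hence $a\Er\sigma(a)$. (The underlying reason, recalled in the proof of that proposition, is that equivalence over $M$ refines $\equiv_L$, which in turn refines $E$.) Thus $\sigma(a)$ lies in the $E$-class of $a$. Now I invoke $E$-saturation of $Y$: since $a\in Y$, the whole $E$-class of $a$ is contained in $Y$, and therefore $\sigma(a)\in Y$. As $a\in Y$ and $\sigma\in\Aut(\mon/M)$ were arbitrary, $\sigma[Y]\subseteq Y$; applying the same to $\sigma^{-1}$ gives $\sigma[Y]=Y$, so $Y$ is $M$-invariant. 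Equivalently, one may note that $Y=\bigcup_{a\in Y}[a]_E$ by saturation, and a union of $M$-invariant sets is $M$-invariant.

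There is essentially no obstacle here. All the substantive work is done by Proposition~\ref{prop:eqcmp}, and the only content this corollary adds is the elementary observation that an $E$-saturated set is a union of $E$-classes. The sole point worth stating carefully is the translation between ``each $E$-class is $M$-invariant'' and ``$\tp(a/M)=\tp(b/M)$ implies $a\Er b$'', which is immediate from the definition of $M$-invariance of a set under $\Aut(\mon/M)$.
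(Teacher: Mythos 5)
Your argument is correct and is essentially the paper's own proof: both rest on the first bullet of Proposition~\ref{prop:eqcmp} (each $E$-class is setwise $M$-invariant) together with the observation that an $E$-saturated set is a union of $E$-classes, which you state explicitly at the end. The element-chase with $\sigma\in\Aut(\mon/M)$ is just a spelled-out version of the paper's one-line argument.
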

\begin{proof}
Since $Y$ is $E$-saturated, it is a union of $E$-classes, each of which is setwise $M$-invariant.
\end{proof}
We also have a variant for groups.
\begin{cor}
\label{cor:satinvg}
If $G$ is an invariant group and $H$ is a subgroup of $G$ containing some invariant subgroup of bounded index (equivalently, $H$ contains $G^{000}$), then every coset of $H$ (including $H$ itself) is invariant over any model $M$.
\end{cor}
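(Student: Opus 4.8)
The plan is to reduce this to Corollary~\ref{cor:satinv} by exhibiting a single bounded, invariant equivalence relation all of whose saturated sets contain the cosets of $H$. First I would settle the parenthetical equivalence: if $H$ contains some invariant subgroup $N$ of bounded index, then minimality of $G^{000}$ (the smallest invariant subgroup of bounded index) gives $G^{000}\subseteq N\subseteq H$; conversely, if $G^{000}\subseteq H$, then $G^{000}$ itself witnesses the other condition. So I may assume $G^{000}\subseteq H$ throughout.

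Next I would introduce the relation $E$ on $G$ given by $x\Er y\iff x^{-1}y\in G^{000}$, i.e.\ ``$x$ and $y$ lie in the same left coset of $G^{000}$''. This is an equivalence relation since $G^{000}$ is a subgroup; it is bounded since its number of classes is $[G:G^{000}]$, which is bounded; and it is invariant because $G^{000}$ is an invariant set and $\Aut(\mon)$ acts by group automorphisms, so for $\sigma\in\Aut(\mon)$ we have $\sigma(x)^{-1}\sigma(y)=\sigma(x^{-1}y)\in\sigma[G^{000}]=G^{000}$. Note this last computation uses only that $G^{000}$ is invariant as a set, not that it is normal.

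Then I would verify that every coset of $H$ is $E$-saturated and invoke Corollary~\ref{cor:satinv}. For a left coset $gH$: if $a\in gH$ and $a\Er b$, then $g^{-1}a\in H$ and $(g^{-1}a)^{-1}(g^{-1}b)=a^{-1}b\in G^{000}\subseteq H$, whence $g^{-1}b\in H$, i.e.\ $b\in gH$; taking $g=1$ handles $H$ itself. For a right coset one argues symmetrically, first observing that since $G^{000}$ is normal, $E$ also coincides with ``same right coset of $G^{000}$''. In either case the coset is a union of $E$-classes, so Corollary~\ref{cor:satinv} yields that it is invariant over every model $M$. The argument is essentially routine once $E$ is in place; the only points requiring a little care are the verification that $E$ is invariant (resting on invariance of $G^{000}$ and on $\Aut(\mon)$ acting by automorphisms) and the symmetric treatment of left versus right cosets via normality of $G^{000}$, so I do not anticipate any serious obstacle.
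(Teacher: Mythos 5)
Your proposal is correct and follows essentially the same route as the paper, which proves this corollary in one line by applying Corollary~\ref{cor:satinv} to the relation of lying in the same coset of $G^{000}$. You have merely spelled out the routine verifications (the parenthetical equivalence via minimality of $G^{000}$, boundedness and invariance of $E$, and $E$-saturation of the cosets of $H$) that the paper leaves implicit.
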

\begin{proof}
Immediate from the previous corollary with $E$ being the relation of being in the same coset of $G^{000}$.
\end{proof}

The next proposition establishes a notion of Borel cardinality.
\begin{prop}[generalisation of {\autocite[Proposition 2.3]{KPS13}}]
\label{prop:cartdf}
Assume that the language is countable. Let $E$ be a bounded (invariant) Borel equivalence relation on some type-definable and countably supported set $X$, and suppose $Y\subseteq X$ is pseudo-closed and $E$-saturated. Then the Borel cardinality of the restriction of $E^M$ to $Y_M$ does not depend on the choice of the countable model $M$. In particular, for $X=Y$, the Borel cardinality of $E^M$ does not depend on the choice of the countable model $M$.
\end{prop}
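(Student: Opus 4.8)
The plan is to reduce to the case of two countable models with $M \preceq N$, and then to build Borel reductions in both directions out of the restriction map on type spaces together with a Borel section of it; this mirrors the proof of the original \autocite[Proposition 2.3]{KPS13}.

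First I would dispose of the general case. Given arbitrary countable $M,N$, downward L\"owenheim--Skolem applied to the countable set $M\cup N$ yields a countable model $N'$ with $M\cup N\subseteq N'\preceq \mon$, so that $M\preceq N'$ and $N\preceq N'$; transitivity of $\sim_B$ then reduces everything to the case $M\preceq N$. Fixing such $M\preceq N$, I next need the relevant spaces to be standard Borel. Since the language and the support of $X$ are countable and $X$ is type-definable, $S_X(M)$ and $S_X(N)$ are compact Polish. As $Y$ is $E$-saturated it is $M$- and $N$-invariant by Corollary~\ref{cor:satinv}, and being pseudo-closed it is closed over $M$ and over $N$ by the second part of Corollary~\ref{cor:bcomp}; hence $Y_M$ and $Y_N$ are compact Polish, and since $E$ is Borel, $E^M\restr_{Y_M}$ and $E^N\restr_{Y_N}$ are Borel equivalence relations by Proposition~\ref{prop:eqcmp}.

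Next I would show that the restriction map $r\colon Y_N\to Y_M$, $r(\tp(a/N))=\tp(a/M)$, which is a continuous surjection, is in fact a reduction of $E^N\restr_{Y_N}$ to $E^M\restr_{Y_M}$: by the first bullet of Proposition~\ref{prop:eqcmp} the relation $a\Er b$ depends only on $\tp(a/M),\tp(b/M)$, so for $p,q\in Y_N$ with realisations $a,b\in Y$ one gets $p\,E^N\,q \iff a\Er b \iff r(p)\,E^M\,r(q)$. Being continuous, $r$ is Borel, giving $E^N\restr_{Y_N}\leq_B E^M\restr_{Y_M}$. For the opposite direction I would apply Lemma~\ref{lem:cons} to the continuous surjection $r$ between compact Polish spaces to extract a Borel section $s\colon Y_M\to Y_N$ with $r\circ s=\id$; then $s$ is automatically a reduction, since $p\,E^M\,q \iff r(s(p))\,E^M\,r(s(q)) \iff s(p)\,E^N\,s(q)$, whence $E^M\restr_{Y_M}\leq_B E^N\restr_{Y_N}$ and the two relations are bireducible.

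I expect the reverse reduction to be the only real obstacle: a restriction-of-parameters map naturally reduces the finer-parameter relation $E^N$ to the coarser one $E^M$, and reversing this requires inverting $r$ in a Borel-measurable way, which is exactly the descriptive-set-theoretic content of Lemma~\ref{lem:cons}. The agreeable point is that once a Borel section of a reduction exists, it is a reduction for free, so no extra model theory is needed there; the hypotheses that $Y$ is both pseudo-closed and $E$-saturated are used precisely to guarantee that $Y_M$ and $Y_N$ are closed, so that Lemma~\ref{lem:cons} applies to genuinely compact spaces.
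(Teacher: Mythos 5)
Your proposal is correct and follows exactly the route the paper intends: the paper's proof is a one-line reference to Lemma~\ref{lem:cons} and the argument of \autocite[Proposition 2.3]{KPS13}, which is precisely the reduction to the case $M\preceq N$, the restriction map as a Borel reduction in one direction, and its Borel section as the reduction in the other. All the supporting facts you invoke (Corollaries~\ref{cor:satinv} and~\ref{cor:bcomp}, Proposition~\ref{prop:eqcmp}) are the ones the paper cites for the same purposes.
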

\begin{proof}
Follows from Lemma~\ref{lem:cons} analogously to \autocite[Proposition 2.3]{KPS13}. (Note that because of Proposition~\ref{prop:eqcmp} and Corollary~\ref{cor:satinv} the relations $E^M$ and $E^M\restr_{Y_M}$ are well-defined Borel equivalence relations on Polish spaces.)
\end{proof}
We have thus justified the following definition.
\begin{dfn}
If $E$ is as in the previous proposition, then by \emph{Borel cardinality of $E$} we mean the Borel cardinality of $E^M$ for a countable model $M$. Likewise, we say that $E$ is smooth if $E^M$ is smooth for a countable model $M$.

Similarly, if $Y$ is pseudo-closed and $E$-saturated, the \emph{Borel cardinality of $E\restr_Y$} is the Borel cardinality of $E^M\restr_{Y_M}$ for a countable model $M$.
\end{dfn}

\begin{fct}
\label{fct:tdsmt}
A bounded, type-definable equivalence relation is smooth. Similarly, if the restriction of a bounded, invariant equivalence relation to a saturated, pseudo-closed set $Y$ is relatively type-definable, then the restriction is smooth.
\end{fct}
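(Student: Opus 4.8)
The plan is to realise $E^M$ (or its restriction to $Y_M$) as a \emph{closed} equivalence relation on a \emph{compact metrizable} space, and then invoke the general principle that such relations are smooth because their quotient is again compact metrizable and the quotient map is a continuous (hence Borel) reduction to equality on it.

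First I would handle the first statement. Fix a countable model $M$. Since $X$ is type-definable and countably supported and the language is countable, $S_X(M)$ is a closed subspace of the compact metrizable space $S_P(M)$, hence itself compact and metrizable. By the remark following the definition of $E^M$ together with the first bullet of Proposition~\ref{prop:eqcmp}, $E^M$ is a genuine equivalence relation on $S_X(M)$; and since $E$ is type-definable it is closed (as a subset of $S_{P^2}(\emptyset)$), so by the second bullet of Proposition~\ref{prop:eqcmp} the relation $E^M$ is closed in $S_X(M)^2$.

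The core of the argument is then the following topological fact: if $F$ is a closed equivalence relation on a compact metrizable space $Z$, then $F$ is smooth. I would prove this by passing to the topological quotient $q\colon Z\to Z/F$. As $Z$ is compact Hausdorff and $F\subseteq Z^2$ is closed, $q$ is a closed map and $Z/F$ is compact Hausdorff. The key point — and the step I expect to require the most care — is that $Z/F$ is moreover \emph{metrizable}: being a Hausdorff continuous image of the compact metrizable $Z$, it is second countable, a countable base being given by the sets $Z/F \setminus q[Z\setminus V]$, where $V$ ranges over a countable base of $Z$ that is closed under finite unions (compactness of the individual $F$-classes shows that these sets do form a base), so Urysohn metrization applies. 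Once $Z/F$ is a compact metrizable, hence Polish, space, the continuous surjection $q$ is a Borel reduction of $F$ to equality on $Z/F$, which witnesses smoothness. Applying this with $Z=S_X(M)$ and $F=E^M$ gives the first statement.

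For the second statement I would run the same argument on $Y_M$ in place of $S_X(M)$. Here $Y$ is $E$-saturated, so by Corollary~\ref{cor:satinv} it is $M$-invariant, and being pseudo-closed it is type-definable over a small set; hence, via Corollary~\ref{cor:bcomp}, $Y_M$ is closed in $S_X(M)$ and therefore again compact metrizable. The hypothesis that $E\restr_Y$ is relatively type-definable means that $E_M\cap (Y^2)_M$ is closed, so by the third bullet of Proposition~\ref{prop:eqcmp} the relation $E^M\restr_{Y_M}$ is closed in $Y_M^2$. Thus $E^M\restr_{Y_M}$ is a closed equivalence relation on a compact metrizable space, and the topological fact above yields smoothness. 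The only genuinely non-formal ingredient throughout is the metrizability of the Hausdorff quotient $Z/F$; everything else is bookkeeping with the results already established in this section.
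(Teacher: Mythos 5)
Your argument is correct, but it takes a genuinely different route from the paper at the key step. The paper also reduces the statement to the observation that $E^M$ (resp.\ $E^M\restr_{Y_M}$) is a closed equivalence relation on a compact Polish space, but from there it simply notes that closed sets are in particular $G_\delta$ and invokes the external theorem that $G_\delta$ equivalence relations on Polish spaces are smooth (citing Becker--Kechris, Theorem 3.4.3). You instead prove the closed case from scratch: you form the topological quotient $Z/F$, use closedness of $F$ and compactness of $Z$ to get that $q$ is a closed map and $Z/F$ is compact Hausdorff, establish second countability of $Z/F$ via the base $\{Z/F\setminus q[Z\setminus V]\}$ (your verification that these form a base, using compactness of the classes, is the right one), apply Urysohn, and then observe that $q$ itself is a continuous reduction to $\Delta(Z/F)$. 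What your approach buys is self-containedness and an explicit, even continuous, reduction witnessing smoothness, at the cost of a page of point-set topology; what the paper's citation buys is brevity and the strictly more general $G_\delta$ case (not needed here), at the cost of relying on a nontrivial black box. The bookkeeping surrounding the second statement (that $Y_M$ is compact via Corollaries~\ref{cor:satinv} and~\ref{cor:bcomp}, and that relative type-definability of $E\restr_Y$ gives closedness of $E^M\restr_{Y_M}$ via Proposition~\ref{prop:eqcmp}) matches what the paper does implicitly.
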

\begin{proof}
If $E$ is type-definable, then so is its domain, and the corresponding subset of $S(M)^2$ is closed (by Proposition~\ref{prop:eqcmp}), and in particular $G_\delta$, and therefore smooth (cf.\ for example \autocite[Theorem 3.4.3]{BK96}). The proof of the second part is analogous: the Borel cardinality of the restriction of $E$ to $Y$ is the Borel cardinality of $E^M\cap (Y_M)^2$, which is closed in $(Y_M)^2$, and thus smooth.
\end{proof}

\subsection{Normal forms}
In this subsection, we introduce some more specific kinds of invariant equivalence relations, which naturally arise in the context of the main result.

\begin{dfn}[Normal form]
\label{dfn:nfrm}
If $\Phi_n(x,y)$ is a sequence of (partial) types on a type-definable set $X$ such that $\Phi_0(x,y)=((x=y)\land x\in X)$ and which is increasing (i.e.\ for all $n$, $\Phi_n(x,y)\vdash \Phi_{n+1}(x,y)$), then we say that \emph{$\biglor_{n\in \N} \Phi_n(x,y)$ is a normal form} for an invariant equivalence relation $E$ on $X$ if we have for any $a,b\in X$ the equivalence $a \Er b\iff \mon \models \biglor_{n\in \N}\Phi_n(a,b)$, and if the binary function $d=d_\Phi\colon X^2\to \N\cup \{\infty\}$ defined as 
\[
d(a,b)= \min \{n\in \N \mid \mon\models \Phi_n(a,b)\}
\]
(where $\min \emptyset=\infty$) is an invariant metric with possibly infinite values -- that is, it satisfies the axioms of coincidence, symmetry and triangle inequality. In this case, we say that \emph{$d$ induces $E$ on $X$}.
\end{dfn}

\begin{ex}
The prototypical example of a normal form is $\biglor_n d_L(x,y)\leq n$, inducing $\equiv_L$, and $d_L$ is the associated metric (where $\equiv_L$ is the relation of having the same Lascar strong type and $d_L$ is the Lascar distance).
\end{ex}

\begin{rem}
The Lascar distance, by its very definition, has the nice property that it is ``geodesic'' in the sense that if two points $a,b$ are at distance $n$, then there is a sequence of points $a=a_0,a_1,\ldots,a_n=b$ such that each pair of successive points is at distance $1$. The metrics obtained from normal forms usually will not have this property (notice that existence of such a ``geodesic'' metric for $E$ is equivalent to $E$ being the transitive closure of a type-definable relation).
\end{rem}

\begin{ex}
If $\Phi_n(x,y)$ is an increasing sequence of type-definable equivalence relations, then $\biglor_n\Phi_n(x,y)$ is trivially a normal form. In particular, if $E=\Phi(x,y)$ is type-definable, then we can put (for all $n>0$) $\Phi_n(x,y)=\Phi(x,y)$, yielding a somewhat degenerate normal form for $E$.
\end{ex}

\begin{dfn}
If we have an invariant equivalence relation $E$ on a type-definable set $X$ with a normal form $\biglor_{n\in \N} \Phi_n(x,y)$, corresponding to a metric $d$, and $Y\subseteq X$ is some nonempty set, then the \emph{diameter of $Y$} is the supremum of $d$-distances between points in $Y$.
\end{dfn}

\begin{fct}
\label{fct:stdiam}
If $E$ is as above, and $X$ is (the set of realisations of) a single complete type, then all $E$-classes have the same diameter (because the $\Aut(\mon)$ acts transitively on $X$ in this case, and the diameter is invariant under automorphisms).
\end{fct}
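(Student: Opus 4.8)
The plan is to exploit transitivity of the automorphism group together with the invariance of both $E$ and the metric $d=d_\Phi$, transporting one $E$-class onto another by an automorphism. Since the diameter of a set is defined purely in terms of $d$-distances between its points, any automorphism preserving $d$ must preserve diameters of sets, and this will immediately yield the claim once we know that every $E$-class is the image of every other under a suitable automorphism.

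First I would fix two arbitrary $E$-classes $C_1$ and $C_2$ and pick representatives $a_1\in C_1$, $a_2\in C_2$. Since $X$ is the set of realisations of a single complete type over $\emptyset$, we have $\tp(a_1/\emptyset)=\tp(a_2/\emptyset)$, so by strong homogeneity of $\mon$ there is an automorphism $\sigma\in\Aut(\mon)$ with $\sigma(a_1)=a_2$. Next I would check that $\sigma$ carries $C_1$ onto $C_2$: as $E$ is invariant, $\sigma$ sends $E$-classes to $E$-classes, and $\sigma[C_1]$ is an $E$-class containing $\sigma(a_1)=a_2\in C_2$; since distinct $E$-classes are disjoint, this forces $\sigma[C_1]=C_2$.

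Finally, using that $d$ is an invariant metric (so $d(\sigma(a),\sigma(b))=d(a,b)$ for all $a,b$), and that $\sigma\restr_{C_1}\colon C_1\to C_2$ is a bijection, I would reindex the supremum defining the diameter:
\[
\operatorname{diam}(C_2)=\sup_{a,b\in C_2} d(a,b)=\sup_{a,b\in C_1} d(\sigma(a),\sigma(b))=\sup_{a,b\in C_1} d(a,b)=\operatorname{diam}(C_1).
\]
As $C_1,C_2$ were arbitrary, all $E$-classes have the same diameter.

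There is no genuine obstacle in this argument; the only point deserving a moment's attention is that the suprema involved may be infinite, since $d$ takes values in $\N\cup\{\infty\}$. However, the reindexing via the bijection $\sigma\restr_{C_1}$ is valid regardless of whether the value is finite, so the equality of these extended-real-valued suprema holds unconditionally.
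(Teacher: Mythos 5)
Your argument is correct and is exactly the one the paper gives in its parenthetical justification: transitivity of $\Aut(\mon)$ on a single complete type carries one $E$-class onto another, and invariance of the metric $d$ then forces equal diameters. The only difference is that you spell out the details (disjointness of classes, reindexing the supremum, the infinite-valued case), which the paper leaves implicit.
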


The following proposition is the essential step in adapting the techniques of \autocite{KMS14} to prove Theorem~\ref{thm:mainA}.
\begin{prop}
\label{prop:nfrm}
Suppose $E$ is an $F_\sigma$ (over $\emptyset$), bounded equivalence relation on a type-definable set $X$. Then $E$ has a normal form $\biglor_n \Phi_n$ such that $\Phi_1(x,y)$ holds for any $x,y$ which are terms of an infinite indiscernible sequence. (This implies that for any $a,b$, if $d_L(a,b)\leq n$, then $\models \Phi_{n}(a,b)$, so that the induced metric satisfies $d\leq d_L$. It also shows that every $F_\sigma$ equivalence relation has a normal form.)
\end{prop}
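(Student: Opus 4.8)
The plan is to build the sequence $\Phi_n$ directly as iterated compositions of a single symmetric type-definable relation, chosen so as to absorb both an $F_\sigma$ presentation of $E$ and the relation of lying on a common indiscernible sequence. Two ``closure'' facts make every set produced this way type-definable. Since the restriction map between type spaces over $\emptyset$ is a continuous surjection of compact Hausdorff spaces, it is closed; applied to the projection $S_{P^3}(\emptyset)\to S_{P^2}(\emptyset)$ forgetting the middle coordinate, this shows that the composition $R\circ S$ (relative product) of two $\emptyset$-type-definable relations on $X$ is again $\emptyset$-type-definable. Applied instead to the projection of the (type-definable) set of infinite indiscernible sequences onto their first two terms, the same argument shows that the relation $\Theta(x,y)=$``$x$ and $y$ lie on a common infinite indiscernible sequence'' is type-definable; this is the relation $d_L\le 1$ (cf.\ the first section of \autocite{CLPZ01}). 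Note that $\Theta$ is symmetric and reflexive, that $\Theta\restr_X\subseteq{\equiv_L}\restr_X\subseteq E$ (as $E$ is bounded and invariant and $\equiv_L$ is the finest such relation), and that any two terms of an infinite indiscernible sequence form a $\Theta$-related pair, since a subsequence of an indiscernible sequence is again indiscernible.

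Next, using that $E$ is $F_\sigma$ over $\emptyset$, I would write $E=\bigcup_n C_n$ with each $C_n$ type-definable and $C_n\subseteq E\subseteq X^2$, and set $D_n=\bigcup_{i\le n}C_i$, which is type-definable and increasing with union $E$. I then put $R_n:=D_n\cup D_n^{-1}\cup (\Theta\restr_X)\cup \{(x,y):x=y\wedge x\in X\}$; each $R_n$ is type-definable, symmetric and reflexive, the family is increasing, $\bigcup_n R_n=E$, and $R_1\supseteq\Theta\restr_X$. Finally I define $\Phi_0:=\{(x,y):x=y\wedge x\in X\}$ and, for $n\ge 1$, let $\Phi_n$ be the $n$-fold self-composition $R_n\circ\cdots\circ R_n$, which is type-definable by the first step and contained in $X^2$.

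It then remains to verify the normal-form axioms, which is routine bookkeeping of the indices. Symmetry of each $\Phi_n$ comes from symmetry of $R_n$ by reversing a path; the chain $\Phi_n\subseteq\Phi_{n+1}$ follows from $R_n\subseteq R_{n+1}$ together with reflexivity of $R_{n+1}$ (pad a length-$n$ path to length $n+1$); and $\bigcup_n\Phi_n=E$ because $R_n\subseteq\Phi_n\subseteq E^{\circ n}=E$, the last equality holding as $E$ is an equivalence relation. For the triangle inequality one checks $\Phi_m\circ\Phi_k\subseteq\Phi_{m+k}$: concatenating a length-$m$ $R_m$-path with a length-$k$ $R_k$-path yields a length-$(m+k)$ path whose steps all lie in $R_{\max(m,k)}\subseteq R_{m+k}$. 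Consequently $d=d_\Phi$ satisfies coincidence (from $\Phi_0=\{x=y\}$), symmetry and the triangle inequality, and is invariant, so $\biglor_n\Phi_n$ is a normal form for $E$.

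Since $\Phi_1=R_1\supseteq\Theta\restr_X$, it holds on every pair of terms of an infinite indiscernible sequence; and as $\Theta$ realises $d_L\le 1$, a $d_L$-path of length $n$ becomes $n$ successive $\Phi_1$-steps, so the triangle inequality gives $\Phi_n(a,b)$ whenever $d_L(a,b)\le n$, i.e.\ $d\le d_L$. The only genuinely non-trivial ingredients are the two type-definability (closed-projection) facts, in particular the type-definability of $\Theta$; once these are in hand, everything else follows from the choice of the weighting $R_n^{\circ n}$, which is engineered precisely to make the triangle inequality automatic.
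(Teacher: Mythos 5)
Your proof is correct and follows essentially the same route as the paper's: symmetrize an $F_\sigma$ presentation of $E$, absorb the type-definable relation $d_L\le 1$ at the first level (using that ${\equiv_L}\restr_X$ refines $E$ by boundedness), and close under relative products so that the indices add. The only cosmetic difference is the weighting scheme: the paper uses the doubling recursion $\Phi_{n+1}'=\Phi_{n+1}\lor\Phi_{n+1}^{-1}\lor(\Phi_n'\circ\Phi_n')$, which in fact yields Newelski's stronger condition $d(a,b),d(b,c)\le n\Rightarrow d(a,c)\le n+1$, whereas your $\Phi_n=R_n^{\circ n}$ yields exactly the triangle inequality required by the paper's definition of normal form.
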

\begin{proof}
As $E$ is bounded, the Lascar strong type restricted to $X$ is a refinement of $E$ (cf.\ \autocite[Fact 1.4]{CLPZ01}), and hence $E\cup ({\equiv}_L\restr_X)=E$. In addition, since $E$ is $F_\sigma$, we can find types $\Phi_n(x,y)$ such that $x\Er y\iff \mon\models \biglor_n\Phi_n(x,y)$.

Consider the sequence $\Phi'_n(x,y)$ of types, defined recursively by:
\begin{enumerate}
\item
$\Phi_0'(x,y)=((x=y)\land x\in X)$,
\item
$\Phi_1'(x,y)=(\Phi_1(x,y)\lor \Phi_1(y,x)\lor x=y\lor d_L(x,y)\leq 1)\land (x,y\in X)$,
\item
$\Phi_{n+1}'(x,y)=\Phi_{n+1}(x,y)\lor \Phi_{n+1}(y,x)\lor (\exists z)(\Phi'_{n}(x,z)\land\Phi'_n(z,y))$.
\end{enumerate}
It is easy to see that $\biglor \Phi_{n}'$ is a normal form and represents the smallest equivalence relation containing $E$ and ${\equiv}_L\restr_X$ (as a set of pairs), which is just $E$, and $d_L(x,y)\leq 1$ (i.e.\ the statement that $x,y$ are in an infinite indiscernible sequence) implies $\Phi'_1(x,y)$ by the definition.

The statement in the parentheses follows from the fact that $d_L(a,b)\leq n$ is defined as the $n$-fold composition of $d_L(a,b)\leq 1$.
\end{proof}

The theorem of Newelski we will see shortly is a motivating example for the study of Borel cardinality: it can be interpreted as saying that some equivalence relations have Borel cardinality of at least $\Delta(2^\N)$. We will see later in Corollary~\ref{cor:maincont2} that for $E$ which are orbital (a concept which we will define soon), we can strengthen this result to replace $\Delta(2^\N)$ with $\EZ$, and this is optimal in the sense explained in a remark after Corollary~\ref{cor:maincont2}.
\begin{thm}[{[Corollary 1.12]\autocite{New03}}]
\label{thm:twN}
Assume $x \Er y$ is an equivalence relation refining $\equiv$, with normal form $\biglor_{n\in\N}\Phi_n$. Assume $p\in S(\emptyset)$ and $Y\subseteq p(\mon)$ is pseudo-closed and $E$-saturated. Then either $E$ is equivalent on $Y$ to some $\Phi_n(x,y)$ (and therefore $E$ is relatively type-definable on $Y$), or $\lvert Y/E\rvert\geq 2^{\aleph_0}$.
\end{thm}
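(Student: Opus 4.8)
The plan is to run the whole argument through the invariant metric $d=d_\Phi$ attached to the normal form, turning the statement into a dichotomy about the common diameter of the $E$-classes in $Y$. First I would observe that, since $Y\subseteq p(\mon)$ lies inside a single complete type and is $E$-saturated, the $E$-classes meeting $Y$ are exactly classes of $p(\mon)$, so by Fact~\ref{fct:stdiam} they all share one diameter $D\in\N\cup\{\infty\}$. Note also that each $\Phi_n$ is one of the increasing disjuncts of the normal form, so $\models\Phi_n(a,b)$ already forces $a\Er b$; hence $\Phi_n\restr_Y\subseteq {\Er}\restr_Y$ for every $n$, while conversely $a\Er b$ gives $d(a,b)<\infty$.

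If $D=m<\infty$, then $a\Er b$ implies $d(a,b)\le m$, i.e.\ $\models\Phi_m(a,b)$, so combined with the previous remark ${\Er}\restr_Y=\Phi_m\restr_Y$; as $\Phi_m$ is a partial type this is precisely the first alternative, with $E$ relatively type-definable on $Y$. Thus the entire content lies in the case $D=\infty$ (equivalently, the case where ${\Er}\restr_Y$ differs from every $\Phi_n$), where I must produce $2^{\aleph_0}$ pairwise $E$-inequivalent points of $Y$. For this I would build a binary tree $(U_s)_{s\in 2^{<\omega}}$ of nonempty pseudo-closed subsets of $Y$ (partial types over a small parameter set) with $U_{s0},U_{s1}\subseteq U_s$ and with the separation property that any two distinct nodes $s\ne t$ of the same length $n$ satisfy $d(U_s,U_t)>n$. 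Granting such a tree, each branch $\eta\in 2^{\omega}$ yields a decreasing chain of nonempty pseudo-closed sets, so by compactness $\bigcap_n U_{\eta\restr n}\ne\emptyset$; choosing $a_\eta$ in it, any two branches $\eta\ne\nu$ splitting at level $k$ have $a_\eta\in U_{\eta\restr n}$ and $a_\nu\in U_{\nu\restr n}$ with $d(U_{\eta\restr n},U_{\nu\restr n})>n$ for all $n>k$, whence $d(a_\eta,a_\nu)=\infty$ and $a_\eta\not\Er a_\nu$. The $2^{\aleph_0}$ branches then give $2^{\aleph_0}$ distinct classes.

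The hard part is the inductive splitting step, which is the only place $D=\infty$ is genuinely used: given $U_s$ I must find $U_{s0},U_{s1}\subseteq U_s$ with $d(U_{s0},U_{s1})>n+1$ while simultaneously shrinking every level-$n$ set so that all $2^{n+1}$ new sets become pairwise $>n+1$ apart (note the distances are integer-valued, so passing from $>n$ to $>n+1$ requires actively removing the closest pairs), and — crucially — so that each new set remains rich enough (infinite diameter, still meeting $2^{\aleph_0}$ classes) to be split again. Maintaining this richness invariant is the real obstacle: infinite diameter supplies, inside $U_s$, points at arbitrarily large distance to seed a far-apart split, but one must argue, by a compactness and homogeneity argument using that $d$ is an invariant metric, that the two far pieces can be taken of infinite diameter rather than mere bounded $d$-balls.

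A cleaner packaging of exactly this difficulty, available in the countable case, is to prove the contrapositive ``at most countably many classes $\Rightarrow$ finite diameter'': if $Y_M$ had only countably many $E^M$-classes, then, writing each class as the $F_\sigma$ union of the closed $d$-balls about a representative taken into $M$, Baire category would produce a formula cutting out a piece of some class of bounded diameter, and a homogeneity and compactness argument together with the common-diameter conclusion of Fact~\ref{fct:stdiam} would force $D<\infty$; Silver's dichotomy, applied to the $F_\sigma$ (hence Borel) relation $E^M$, then upgrades ``not at least $2^{\aleph_0}$ classes'' to ``at most countably many'', closing the dichotomy. Either way the technical heart is the control of $d$-distances during the splitting, and this is where I expect to spend the most effort.
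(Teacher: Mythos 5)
First, note that the paper does not prove this statement at all: it is imported verbatim as Corollary~1.12 of \autocite{New03}, so there is no in-paper argument to compare against, and your proposal has to stand on its own as a proof of Newelski's theorem.

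Your reduction of the dichotomy to the common diameter $D$ of the $E$-classes in $p(\mon)$ is correct and essentially free: Fact~\ref{fct:stdiam} (or rather its proof, via transitivity of $\Aut(\mon)$ on $p(\mon)$ and invariance of $d$) gives the common diameter, $D=m<\infty$ yields $E\restr_Y=\Phi_m\restr_Y$, and the branch-intersection argument correctly converts a tree with your stated separation property into $2^{\aleph_0}$ classes, since $a\Er b$ forces $d(a,b)<\infty$. But the entire mathematical content of the theorem is the implication ``$D=\infty\Rightarrow\lvert Y/E\rvert\geq 2^{\aleph_0}$'', and that is exactly the step you do not prove. The inductive splitting you describe is not merely technical: the obvious candidates fail, since a $d$-ball $B_n(a)\cap U_s$ is pseudo-closed but has finite diameter (so cannot be split again), while the complement $U_s\setminus B_n(a)$ is not pseudo-closed (complements of type-definable sets are $\bigvee$-definable), so one cannot even write down two pseudo-closed, far-apart pieces each of infinite diameter without a new idea. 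The invariant ``infinite diameter'' is also not obviously the right one to propagate; the known proofs (Newelski's, and the genericity analysis in \autocite{KMS14}) replace it by a Baire-category/non-meagerness condition on relatively open subsets of $Y_M$ and prove a genuine dichotomy lemma for formulas to make the splitting go through. Your second route inherits the same unproved core (``a homogeneity and compactness argument \ldots would force $D<\infty$'' is precisely the hard lemma), and in addition Silver's dichotomy restricts it to countable languages, whereas the theorem is stated, and used in this paper (e.g.\ in Corollary~\ref{cor:sumt} and hence in Section~5), with no countability hypothesis. So the proposal is a correct reduction plus an accurate map of where the difficulty lies, but the theorem itself remains unproved.
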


\begin{rem}
Newelski uses a slightly more stringent definition of a normal form (which we may enforce in all interesting cases without any significant loss of generality), i.e.\ that $d$ satisfies not only triangle inequality, but also 
\[
d(a,b),d(b,c)\leq n\implies d(a,c)\leq n+1.
\]
The definition used in this paper is sufficient for the previous theorem, and in addition, it has the added benefit of being satisfied by the Lascar distance $d_L$, and it seems more natural in general.
\end{rem}

The following corollary allows us some freedom with regards to the normal form, allowing us to replace -- in some cases -- any normal form with one chosen as in Proposition~\ref{prop:nfrm}, without loss of generality.
\begin{cor}\label{cor:diam}
Suppose $E$ is an $F_\sigma$ equivalence relation on a type-definable set, and that $E$ refines $\equiv$. Then for any class $C$ of $E$, the following are equivalent:
\begin{enumerate}
\item $C$ is pseudo-closed,
\item $C$ has finite diameter with respect to each normal form of $E$ (i.e.\ it has finite diameter with respect to the metric induced by each normal form),
\item $C$ has finite diameter with respect to some normal form of $E$.
\end{enumerate}
In addition, if $E$ is bounded and all $E$-classes satisfy these conditions, then $E$ is refined by $\equiv_{\KP}$ (restricted to its domain).
\end{cor}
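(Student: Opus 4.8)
The plan is to establish the cycle of implications $(1)\Rightarrow(2)\Rightarrow(3)\Rightarrow(1)$ and then treat the final assertion separately. First I would check that a normal form exists at all: applying the recursive symmetrisation-and-composition construction from the proof of Proposition~\ref{prop:nfrm} to an arbitrary $F_\sigma$ presentation $E=\biglor_n\Psi_n$ yields a normal form without any appeal to boundedness (one takes $\Phi_0=(x=y)\land x\in X$, symmetrises, and closes off under the $\exists z$-composition to force the triangle inequality). Consequently $(2)\Rightarrow(3)$ is immediate, since finite diameter with respect to \emph{every} normal form entails finite diameter with respect to \emph{some} one, there being at least one available.

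For $(3)\Rightarrow(1)$, I would fix a normal form $\biglor_n\Phi_n$ with induced metric $d$ for which $C$ has diameter $\le N$, and fix $a\in C$. Then $b\in C$ forces $d(a,b)\le N$, i.e.\ $\mon\models\Phi_N(a,b)$; conversely $\mon\models\Phi_N(a,b)$ implies both $a\Er b$ (as $\Phi_N\vdash\biglor_n\Phi_n$, which defines $E$) and $b\in X$ (as $\Phi_N$ is a type on $X$), so $b\in C$. Hence $C=\{b:\mon\models\Phi_N(a,b)\}$ is type-definable over the small parameter $a$, that is, pseudo-closed.

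The crucial direction is $(1)\Rightarrow(2)$, and here I would invoke Newelski's theorem (Theorem~\ref{thm:twN}). Because $E$ refines $\equiv$, the class $C$ lies in a single complete type $p(\mon)$; being a single class it is trivially $E$-saturated; and by hypothesis it is pseudo-closed. Thus Theorem~\ref{thm:twN} applies with $Y=C$ for any prescribed normal form, and since $\lvert C/E\rvert=1<2^{\aleph_0}$ the second alternative is excluded. Therefore $E$ agrees on $C$ with some $\Phi_n$; as $C$ is a single $E$-class this says $\mon\models\Phi_n(x,y)$ for \emph{all} $x,y\in C$, i.e.\ $C$ has $d$-diameter $\le n$. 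The normal form being arbitrary, $(2)$ follows, and the only routine point is reading off ``diameter $\le n$'' from ``agrees with $\Phi_n$''.

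For the final assertion, assume $E$ is bounded with all classes pseudo-closed. Boundedness gives $\equiv_L\restr_X\subseteq E$ (as in the proof of Proposition~\ref{prop:nfrm}, cf.\ \autocite[Fact 1.4]{CLPZ01}), so each $E$-class $C$ is $\equiv_L$-saturated as well as pseudo-closed. I would then use the standard fact from the theory of Lascar strong types (\autocite{CLPZ01}) that the $\equiv_{\KP}$-class of a point is the closure, in the logic topology, of its $\equiv_L$-class, equivalently that a pseudo-closed $\equiv_L$-saturated set is automatically $\equiv_{\KP}$-saturated. Applying this to each $C$ shows that every $E$-class is $\equiv_{\KP}$-saturated, which is exactly $\equiv_{\KP}\restr_X\subseteq E$. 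The main obstacle here is precisely this bridge from $\equiv_L$- to $\equiv_{\KP}$-saturation; if a self-contained argument is wanted, it can be supplied by noting that in the compact Lascar Galois group the closure of the identity is a normal subgroup and every closed set is a union of its cosets, transported back through the canonical map $p(\mon)\to\mathrm{Gal}_L(T)$.
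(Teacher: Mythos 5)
Your proof is correct, and the three-way equivalence is established exactly as in the paper: $(1)\Rightarrow(2)$ via Theorem~\ref{thm:twN} applied with $Y=C$, $(2)\Rightarrow(3)$ via the existence of a normal form, and $(3)\Rightarrow(1)$ by writing $C$ as the set defined by $\Phi_N(a,y)$ over a fixed $a\in C$; your observation that the construction in Proposition~\ref{prop:nfrm} yields a normal form without any appeal to boundedness is a useful clarification, since the main equivalence of the corollary does not assume $E$ bounded while the cited proposition formally does. Where you genuinely diverge is the ``in addition'' clause. The paper restricts $E$ to a single complete type, invokes Fact~\ref{fct:stdiam} to obtain a \emph{uniform} finite bound on the diameters of the classes within that type, concludes that the restriction coincides with some $\Phi_n$ and is therefore type-definable, and then cites the minimality of $\equiv_{\KP}$ among bounded type-definable equivalence relations. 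You instead use only that each class is pseudo-closed and $\equiv_L$-saturated, together with the Galois-theoretic fact that such a set is automatically $\equiv_{\KP}$-saturated (the closure of the identity in the Lascar Galois group being a normal subgroup whose cosets partition every closed set). Both arguments are sound; the paper's stays entirely within the normal-form machinery it has already set up, while yours bypasses Fact~\ref{fct:stdiam} at the cost of importing a standard but nontrivial fact about the Lascar Galois group that the paper does not otherwise need.
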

\begin{proof}
Assume that $C$ is pseudo-closed. Setting $Y=C$ in Theorem~\ref{thm:twN}, we immediately get that $C$ has finite diameter with respect to any normal form of $E$. Implication from the second condition to third follows from the fact that $E$ has a normal form by the previous proposition, and the implication from third to first is trivial.

``In addition'' can be obtained as follows. $E$ refines $\equiv$, so it is enough to show that the restriction of $E$ to any $p\in S(\emptyset)$ is refined by the restriction of $\equiv_{\KP}$ to $p$. But any class in the restriction has finite diameter with respect to some normal form, and they all have the same diameter (by Fact~\ref{fct:stdiam}), so in fact, the restriction is type-definable and as such refined by $\equiv_{\KP}$ (cf.\ \autocite[Fact 1.4]{CLPZ01}).
\end{proof}

\begin{ex}
\label{ex:nonf}
The above is no longer true if we allow $E$ to be refined by $\equiv$. For example, consider the theory $T=\Th (\R,+,\cdot,0,1,<)$ of real closed fields, and the total relation on the entire model. Clearly, it has a normal form $\{x=y\}\lor\biglor_{n>0}(x=x)$, and the induced metric is just the discrete $0$-$1$ metric, and in particular its only class (the entire model) has diameter $1$. On the other hand, we might give it a normal form $\{x=y\}\lor\biglor_{n>0}(\bigland_{m\geq n} (x= m\liff y= m))$ (where $m$ ranges over natural numbers). With respect to this normal form, any two distinct positive natural numbers $k,l$ are at distance $\max(k,l)+1$. In particular, the diameter of the only class is infinite.
\end{ex}

\begin{rem}
If $E$ is a type-definable equivalence relation, then its classes are trivially pseudo-closed, so by Corollary~\ref{cor:diam}, if $E$ refines $\equiv$, then for any normal form of $E$, all $E$-classes have finite diameter.
\end{rem}

\subsection{Orbital equivalence relations}

For technical reasons, later on we will rely on the action of a group of automorphisms, so we introduce the following definition.
\begin{dfn}[Orbital equivalence relation, orbital on types equivalence relation]
Suppose $E$ is an invariant equivalence relation on a set $X$.
\begin{itemize}
\item
We say that $E$ is \emph{orbital} if there is a group $\Gamma\leq \Aut(\mon)$ such that $\Gamma$ preserves classes of $E$ setwise and acts transitively on each class.
\item
We say that $E$ is \emph{orbital on types} if it refines type and the restriction of $E$ to any complete $\emptyset$-type is orbital.
\end{itemize}
\end{dfn}

\begin{rems}$\,$
\begin{itemize}
\item
The fact that a given relation is orbital is witnessed by one group $\Gamma$ (which is not necessarily unique), whereas the fact that it is orbital on types is witnessed by a collection of groups (one group for each complete $\emptyset$-type).
\item
An orbital equivalence relation always refines type. (So every orbital equivalence relation is orbital on types.)
\item
The relations $\equiv_L,\equiv_{\KP}$ are orbital (as witnessed by $\Autf{L}(\mon),\Autf{KP}(\mon)$).
\item
The group witnessing that a given relation is orbital can always be chosen as a normal subgroup of $\Aut(\mon)$ (as we can replace it with its normal closure).
\end{itemize}
\end{rems}

The following proposition shows that the definition of an orbital on types equivalence relation is, in a way, the weakest possible for the proof of Theorem~\ref{thm:mainA}.

\begin{prop}
An invariant equivalence relation $E$ refining type is orbital on types if and only if for any class $C$ of $E$ there is a group of automorphisms $\Gamma$ which preserves $E$ classes within the (complete $\emptyset$-)type $p$ containing $C$, and acts transitively on $C$.
\end{prop}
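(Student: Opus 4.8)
The plan is to read off that the two conditions differ only in the order of the quantifiers: being orbital on types supplies, for each complete $\emptyset$-type $p$, one group that simultaneously preserves every $E$-class inside $p$ and is transitive on each of them, whereas the right-hand condition supplies, for each class $C$, a group $\Gamma_C$ that preserves all classes inside the ambient type $p$ but is only asked to be transitive on $C$ itself. So the whole task is to show that the per-class groups $\Gamma_C$ can be amalgamated into a single group that works uniformly across the type. (Throughout I use that $E$ refines type, so each $E$-class sits inside a single complete $\emptyset$-type, and the classes of $E\restr_p$ are precisely the $E$-classes contained in $p$.)

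For the forward implication there is nothing to amalgamate: given a type $p$ and the group $\Gamma$ witnessing orbitality of $E\restr_p$, I would simply take $\Gamma_C=\Gamma$ for every class $C\subseteq p$, since $\Gamma$ preserves all classes inside $p$ and is in particular transitive on $C$.

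For the converse, fix a type $p$ and let
\[
\Gamma_p=\{\sigma\in\Aut(\mon)\mid \sigma(C)=C\ \text{for every $E$-class }C\subseteq p\}
\]
be the group of automorphisms preserving each class inside $p$ setwise. I would first check that this is indeed a subgroup of $\Aut(\mon)$ -- the defining property is closed under composition and inverses -- so that $\Gamma_p$ automatically preserves every class of $E\restr_p$. Then, for each class $C\subseteq p$, the hypothesis provides $\Gamma_C$ which preserves all classes inside $p$, whence $\Gamma_C\leq\Gamma_p$, and which is transitive on $C$; since $\Gamma_C\leq\Gamma_p$, the larger group $\Gamma_p$ is a fortiori transitive on $C$. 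Hence the single group $\Gamma_p$ witnesses that $E\restr_p$ is orbital, and letting $p$ vary gives that $E$ is orbital on types.

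I do not expect a genuine obstacle here -- the argument is essentially formal. The one point that needs care, and the reason the statement is not a tautology, is exactly the simultaneity: a priori each $\Gamma_C$ guarantees transitivity only on its own class, and one must produce a single group transitive on all of them at once. This is handled by noting that ``preserving every class inside $p$'' is a group-theoretic condition, so the maximal such group $\Gamma_p$ contains every $\Gamma_C$ and therefore inherits transitivity on each class while retaining the class-preserving property by its very definition.
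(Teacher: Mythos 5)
Your proof is correct, but it takes a genuinely different route from the paper's. For the converse direction the paper fixes a \emph{single} class $C$ in the type $p$ together with its group $\Gamma$, uses the fact that $\Aut(\mon)$ acts transitively on $p(\mon)$ to find, for each other class $C'$, an automorphism $\sigma$ with $\sigma(C)=C'$, checks that $\sigma\Gamma\sigma^{-1}$ is transitive on $C'$ while still preserving all classes in $p$, and concludes that the normal closure of $\Gamma$ in $\Aut(\mon)$ witnesses orbitality of $E\restr_{p(\mon)}$. You instead use the hypothesis for \emph{every} class $C\subseteq p(\mon)$ and amalgamate the groups $\Gamma_C$ by passing to the maximal class-preserving group $\Gamma_p$, which contains each $\Gamma_C$ and hence inherits transitivity on each class. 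Both arguments are sound. Your version is more elementary in that it never invokes the homogeneity of the monster model, and it produces a canonical (indeed normal, since the defining condition is conjugation-invariant) witness; the paper's version buys slightly more, namely that the hypothesis can be weakened to requiring a suitable group for just one class per complete type, with the remaining classes handled by conjugation --- which is exactly the flexibility exploited later (e.g.\ in Corollary~\ref{cor:2cl}).
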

\begin{proof}
The implication $(\Rightarrow)$ is clearly a weakening. For $(\Leftarrow)$, observe that $\Aut(\mon)$ acts transitively on $X:=p(\mon)$, so for any class $C'\in X/E$ we have an automorphism $\sigma$ which takes $C$ to $C'$. It is easy to see that then $\sigma \Gamma \sigma^{-1}$ acts transitively on $C'$ and preserves all $E$-classes in $X$ setwise. From that we conclude that the normal closure of $\Gamma$ in $\Aut(\mon)$ witnesses that $E$ restricted to $X$ is orbital.
\end{proof}

The following simple corollary allows us to easily recognise some relations as orbital on types.
\begin{cor}
\label{cor:2cl}
If $E$ is an invariant equivalence relation on an invariant set $X$, refining $\equiv$, and the restriction of $E$ to any complete type in $X$ has at most two classes, then $E$ is orbital on types.
\end{cor}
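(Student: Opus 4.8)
The plan is to check the definition of \emph{orbital on types} directly. Since $E$ refines $\equiv$ by hypothesis, the only thing to establish is that the restriction of $E$ to each complete $\emptyset$-type is orbital. So I would fix a complete $\emptyset$-type $p$ with $p(\mon)\subseteq X$, write $X_p=p(\mon)$, and produce a single group of automorphisms witnessing orbitality of $E\restr_{X_p}$. The natural candidate is
\[
\Gamma_p=\{\sigma\in\Aut(\mon)\mid \sigma \text{ preserves every $E$-class contained in } X_p \text{ setwise}\}.
\]
By its very definition $\Gamma_p$ preserves the classes setwise, so the whole problem reduces to showing that $\Gamma_p$ acts transitively on each $E$-class inside $X_p$.

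For transitivity I would take two points $a,a'$ lying in a common $E$-class $C\subseteq X_p$. Because $p$ is complete over $\emptyset$ and $\mon$ is strongly homogeneous, $\Aut(\mon)$ acts transitively on $X_p$, so there is some $\sigma\in\Aut(\mon)$ with $\sigma(a)=a'$; as $\sigma$ fixes $\emptyset$ it maps $X_p$ onto itself, and since $E$ is invariant it permutes the $E$-classes contained in $X_p$. It remains to argue that $\sigma\in\Gamma_p$, and this is exactly where the hypothesis of ``at most two classes'' is used. If $E\restr_{X_p}$ has a single class, then $\Gamma_p=\Aut(\mon)$ and there is nothing to prove. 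If it has two classes $C,C'$, then $\sigma[C]$ is an $E$-class inside $X_p$ containing $\sigma(a)=a'\in C$; two classes being equal or disjoint, this forces $\sigma[C]=C$, and then the only remaining class must satisfy $\sigma[C']=C'$ as well. Hence $\sigma$ preserves both classes setwise, i.e.\ $\sigma\in\Gamma_p$, and since $\sigma(a)=a'$ this gives transitivity of $\Gamma_p$ on $C$ (and symmetrically on $C'$).

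Having shown that $\Gamma_p$ preserves all $E$-classes of $X_p$ setwise and acts transitively on each of them, I conclude that $E\restr_{X_p}$ is orbital; as $p$ was an arbitrary complete type in $X$ and $E$ refines $\equiv$, this yields that $E$ is orbital on types. The only genuinely delicate point is the step $\sigma\in\Gamma_p$: with three or more classes an automorphism carrying $a$ to $a'$ within $C$ could still nontrivially permute the other classes, so it need not preserve the whole partition, and the construction of a transitive class-preserving group would break down. Thus the ``at most two'' assumption is precisely what makes ``preserves $C$'' upgrade for free to ``preserves every class'', and I expect this two-class combinatorial observation to be the crux of the argument.
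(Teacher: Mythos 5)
Your proof is correct and rests on exactly the same observation as the paper's: an automorphism of $\mon$ preserves $X_p$ setwise, so once it fixes one of the (at most) two classes it must fix the other, which is what upgrades ``moves $a$ within its class'' to ``preserves the whole partition''. The only cosmetic difference is that the paper builds a group generated by the witnesses $S\cdot a=[a]_E$ and invokes the preceding proposition to handle the remaining classes, while you take the maximal class-preserving group and check transitivity on each class directly; these are essentially the same argument.
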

\begin{proof}
Without loss of generality we may assume that $X$ is a single complete type, so $\Aut(\mon)$ acts transitively on $X$. In particular, for any element $a\in X$, we have a set $S\subseteq \Aut(\mon)$ such that $S\cdot a=[a]_E$. Since $E$ is invariant, elements of $S$ preserve $[a]_E$ and so does the group $\Gamma=\langle S\rangle$.

Of course, $\Gamma$ preserves $X$, so it also preserves the complement $X\setminus [a]_E$. But since $E$ has at most two classes, this means that $\Gamma$ preserves all classes, so by the previous proposition, $E$ is orbital on types.
\end{proof}

At a glance, it is not obvious whether the condition that $E$ is orbital on types is any stronger than the condition that it refines type. The following examples show that it is indeed the case.

\begin{ex}
\label{ex:finperm}
Consider the permutation group
\begin{align*}
G&=\langle (1,2)(3,5)(4,6), (1,3,6)(2,4,5)\rangle\\
&=\{(), (1,2)(3,5)(4,6), (1,3,6)(2,4,5),\\
&(1,4)(2,3)(5,6), (1,5)(2,6)(3,4), (1,6,3)(2,5,4)\}
\end{align*} acting naturally on a $6$-element set. Then the equivalence relation $\sim$ such that $1\sim 2,\,3\sim 4,\,5\sim 6$ (and no other nontrivial relations) is preserved by $G$, but it is not the orbital equivalence relation of any subgroup (in fact, the only element of $G$ which preserves all $\sim$-classes setwise is the identity).

Let $M_0$ be a structure with base set $\{1,2,3,4,5,6\}$, with a relation symbol $E$ for $\sim$, and such that $G$ is the automorphism group of $M_0$ (which we can obtain, for instance, by adding a predicate for the set of all orbits of $G$ on $M_0^6$).

Then $E$ is an invariant (even definable) equivalence relation which refines $\equiv$ and is not orbital on types.
\end{ex}
We can extend Example~\ref{ex:finperm} to an infinite model in a number of simple ways, for instance, by taking a product with an infinite trivial structure.

We finish with a less artificial example.
\begin{ex}
\label{ex:trace}
Consider a large algebraically closed field $K$ of characteristic $p>0$, and choose some $t\in K$, transcendental over the prime field ${\bf F}_p$, and consider $T=\Th(K,+,\cdot,t)$.

Let $n>3$ be a natural number which is not divisible $p$, and $X$ be the set of $n$-th roots of $t$ in $K$ (i.e.\ the roots of $x^n-t$). Notice that $X$ generates a definable, finite additive group $\langle X\rangle$. Let us introduce
\[
G=(\{a=(a_1,a_2)\in K^2 \mid a_1+a_2\in \langle X\rangle\},+).
\]
$G$ is a definable group (definably isomorphic to $K\times \langle X\rangle$). Consider the equivalence relation on $G$ defined by
\[
a \Er b \iff (a\equiv b\land a_1+a_2=b_1+b_2).
\]
We will show that $E$ is not orbital on types, even though it is type-definable, bounded and refines $\equiv$. (N.b.\ this $E$ is the conjunction of $\equiv$ and the relation of lying in the same coset of $G^{000}$, which in this case is equal to $G^{0}$.)

Let $\xi$ be some primitive $n$th root of unity. 
One can easily check that  for any $x_1,x_2\in X$, the pairs $(x_1,\xi)$ and $(x_2,\xi^{-1})$ have the same type, which implies that all $a\in G$ of the form $(x,\xi^{\pm 1} x)$, where $x\in X$, have the same type, say $p_0\in S_G(\emptyset)$.
For any $x\in X$ we also have $(x,\xi x) \Er (\xi x,x)$. Thus, if $E$ was orbital on types, there would be some automorphism $f\in \Aut(K/t)$ which takes $x$ to $\xi x$ and $\xi x$ to $x$ -- therefore taking $\xi$ to $\xi^{-1}$ -- which preserves setwise the $E$-classes within $p_0$. 
But then
\[
b=f((\xi x,\xi^2 x))=(x,\xi^{-1} x) \mathrel{\neg E} (\xi x,\xi^2 x)=a\models p_0,	
\]
because $a_1+a_2-b_1-b_2=x(\xi+\xi^2-1-\xi^{-1})=\xi^{-1} x(\xi^3+\xi^2-\xi^1-1)$ and $\xi$ is algebraic of degree $n>3$.

We have seen that the $E$-class of $(\xi x,\xi^2 x)\models p_0$ is not preserved by $f$, a contradiction.
\end{ex}

\subsection{Invariant subgroups as invariant equivalence relations}
We start from the following natural definition.

\begin{dfn}
Suppose $G$ is a type-definable group and $H\leq G$ is invariant. We define $E_H$ as the relation on $G$ of lying in the same right coset of $H$.
\end{dfn}

\begin{rem}
Clearly, $E_H$ is invariant, and it has $[G:H]$ classes, so $H$ has bounded index if and only if $E_H$ is a bounded equivalence relation.
\end{rem}

It is not hard to see that invariant subgroups of type-definable groups correspond to invariant equivalence relations as shown in the following lemma.
\begin{lem}
\label{lem:compgrp}
Suppose $G$ is a type-definable group and $H\leq G$ is an invariant subgroup. Then $E_H$ is type-definable or $F_\sigma$ if and only if $H$ is type-definable or $F_\sigma$, respectively.
\end{lem}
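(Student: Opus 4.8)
\emph{The plan is} to realise $E_H$ and $H$ as preimages of one another under $\emptyset$-invariant maps with type-definable graphs, and then to observe that such maps induce continuous maps on the relevant type spaces, under which taking preimages preserves both the class ``closed'' (type-definable) and the class $F_\sigma$. First, since $a \mathrel{E_H} b$ holds exactly when $ab^{-1}\in H$, the relation $E_H$ is the preimage $\mu^{-1}[H]$ under the map $\mu\colon G^2\to G$, $\mu(a,b)=ab^{-1}$. Conversely, writing $e$ for the (necessarily $\emptyset$-definable, as it is the unique $\Aut(\mon)$-fixed idempotent of $G$) identity of $G$, we have $a\in H\iff (a,e)\in E_H$, so $H=\iota^{-1}[E_H]$ for the map $\iota\colon G\to G^2$, $\iota(a)=(a,e)$. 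Thus the entire lemma will follow once preimages under $\mu$ and $\iota$ are shown to respect type-definability and the $F_\sigma$ property.

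The topological engine is the following observation, which I would state and prove as a small claim. Both $\mu$ and $\iota$ are total, $\emptyset$-invariant functions between type-definable sets whose graphs are themselves type-definable over $\emptyset$: for $\iota$ this is immediate, and for $\mu$ it follows because the graph is obtained by composing the (type-definable) graphs of multiplication and inverse of the type-definable group $G$. I claim that any such function $f$ induces a continuous map $\hat f$ on the associated type spaces over $\emptyset$. Indeed, $\emptyset$-invariance guarantees that $\tp(a/\emptyset)$ determines $\tp(f(a)/\emptyset)$, so $\hat f$ is well-defined; moreover the graph, viewed as a subset of the appropriate $S(\emptyset)$, is closed and hence compact, the projection to the domain coordinates is a continuous bijection (bijectivity using that $f$ is a total $\emptyset$-invariant function) between compact Hausdorff spaces and therefore a homeomorphism, and $\hat f$ is the composite of its inverse with the projection to the range coordinates. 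Being a composite of continuous maps, $\hat f$ is continuous.

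With this in hand the conclusion is routine. Under any continuous map, the preimage of a closed set is closed, and the preimage of an $F_\sigma$ set $\bigcup_n C_n$ is $\bigcup_n f^{-1}[C_n]$, again $F_\sigma$. Applying this to $\hat\mu$ gives that $H$ type-definable (resp.\ $F_\sigma$) implies $E_H=\mu^{-1}[H]$ is type-definable (resp.\ $F_\sigma$), and applying it to $\hat\iota$ gives the converse via $H=\iota^{-1}[E_H]$; all of this takes place over $\emptyset$ as per our conventions, and Corollary~\ref{cor:bcomp} ensures that the outcome is insensitive to the choice of small parameter set in any case. The only genuine obstacle is the claim of the second paragraph, i.e.\ verifying that these relatively definable maps descend to \emph{continuous} maps of type spaces; the delicate point there is that the projection from the graph to the domain is a homeomorphism, for which one must use totality, $\emptyset$-invariance and compactness together, together with the (easy but necessary) check that the graph of $\mu$ is type-definable as a composition of type-definable graphs.
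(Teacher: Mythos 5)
Your proof is correct. The forward direction is exactly the paper's argument: the map $\tp(a,b/\emptyset)\mapsto\tp(ab^{-1}/\emptyset)$ is a well-defined continuous map $S_{G^2}(\emptyset)\to S_G(\emptyset)$ (well-defined and continuous precisely because the group operations are type-definable, which is the content of your graph-projection claim), and $E_H$ is the preimage of $H$ under it, so closedness and the $F_\sigma$ property pull back. Where you diverge is the converse: the paper gets it from the \emph{same} map by observing that it is additionally a \emph{closed surjection} (by compactness), so that $H=f[f^{-1}[H]]=f[E_H]$ and images of closed sets, hence of countable unions of closed sets, stay in the same class --- the same mechanism as in Lemma~\ref{lem:cons} and Corollary~\ref{cor:bcomp}. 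You instead introduce the section-like map $a\mapsto(a,e)$, note that $e\in\dcl(\emptyset)$ so this too induces a continuous map on type spaces, and recover $H$ as $\iota^{-1}[E_H]$, reducing both directions to the single fact that continuous preimages preserve ``closed'' and ``$F_\sigma$''. Your variant is slightly more elementary (no appeal to closedness of the map or to the image of a saturated set), at the cost of one extra map and the small check that the identity is $\emptyset$-definable; the paper's version is more economical and reuses machinery it has already set up. Both are complete; your invariance checks (that $\hat\mu^{-1}[H_\emptyset]=(E_H)_\emptyset$ and $\hat\iota^{-1}[(E_H)_\emptyset]=H_\emptyset$ at the level of type spaces, using invariance of $H$ and of $E_H$) are the only points where one could slip, and you have them right.
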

\begin{proof}
Consider the mapping $f\colon S_{G^2}(\emptyset)\to S_G(\emptyset)$ given by $\tp(a,b/\emptyset)\mapsto \tp(ab^{-1}/\emptyset)$.
Since the operations in $G$ are type-definable, this map is a well-defined, continuous and closed (by compactness) surjection, and $E_H=f^{-1}[H]$.
\end{proof}

\begin{rems} $\,$
\begin{itemize}
\item
The previous lemma would remain true if we had taken for $E_H$ the relation of lying in the same left coset, but right cosets will be technically more convenient in a short while.
\item
Equivalence relations $E_H$ do not refine type, and in particular are not orbital on types, which will be needed later on. We will resolve this issue shortly by choosing a different equivalence relation to represent $H$, which will be closely related to $E_H$ (in a way, homeomorphically equivalent) and orbital on types for normal $H$.
\end{itemize}
\end{rems}

The theorem below will allow us to ``transform'' the relation $E_H$ to an equivalence relation on a single type.

\begin{thm}[see {\autocite[Section 3, in particular Propositions 3.3 and 3.4]{GN08}}]
\label{thm:phd}$\,$\\
If $G$ is a definable group, and we adjoin to $\mon$ a left principal homogeneous space $\mathfrak X$ of $G$ (as a new sort; we might think of it as an ``affine copy of $G$''), along with a binary function symbol for the left action of $G$ on $\mathfrak{X}$, then the Kim-Pillay and Lascar strong types correspond exactly to the orbit equivalence relations of $G^{00}$ and $G^{000}$ acting on $\mathfrak X$. Moreover, we have isomorphisms:
\begin{align*}
\Aut((\mon,{\mathfrak X},\cdot))&\cong G\rtimes \Aut(\mon), \\
\Autf{KP}((\mon,{\mathfrak X},\cdot))&\cong G^{00}\rtimes \Autf{KP}(\mon), \\
\Autf{L}((\mon,{\mathfrak X},\cdot))&\cong G^{000}\rtimes \Autf{L}(\mon).
\end{align*}
Where:
\begin{enumerate}
\item
the semidirect product is induced by the natural action of $\Aut(\mon)$ on $G$,
\item
on $\mon$, the action of $\Aut(\mon)$ is natural, and that of $G$ is trivial,
\item
on $\mathfrak{X}$ we define the action by fixing some $x_0$ and putting $\sigma_g(h\cdot x_0)=(hg^{-1})x_0$ and $\sigma(h\cdot x_0)=\sigma(h)\cdot x_0$ (for $g\in G$ and $\sigma\in \Aut(\mon)$).
\end{enumerate}
\end{thm}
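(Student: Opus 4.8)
The plan is to determine $\Aut((\mon,\mathfrak X,\cdot))$ completely by a torsor computation and then read the connected components off the resulting semidirect product. First I fix a basepoint $x_0\in\mathfrak X$; as $\mathfrak X$ is a principal homogeneous space, every point is uniquely $h\cdot x_0$ with $h\in G$. For $\tau\in\Aut((\mon,\mathfrak X,\cdot))$ the restriction $\sigma:=\tau\restr_{\mon}$ lies in $\Aut(\mon)$, and $\tau(x_0)=g^{-1}\cdot x_0$ for a unique $g\in G$; since $\tau$ commutes with the action symbol, $\tau(h\cdot x_0)=\sigma(h)\cdot\tau(x_0)=(\sigma(h)g^{-1})x_0$, so $\tau$ is exactly the map attached to the pair $(g,\sigma)$ by the formulas in (3). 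I would define $\Phi(g,\sigma)$ to be this map and check directly that $\Phi\colon G\rtimes\Aut(\mon)\to\Aut((\mon,\mathfrak X,\cdot))$ is an isomorphism: the homomorphism property is just the semidirect law $(g_1,\sigma_1)(g_2,\sigma_2)=(g_1\sigma_1(g_2),\sigma_1\sigma_2)$ compared with composing the two maps on $h\cdot x_0$, injectivity is immediate from the formula, and surjectivity is the computation above. This step is routine.

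For the Lascar level, put $H_L:=\Phi^{-1}[\Autf{L}((\mon,\mathfrak X,\cdot))]$ and $K:=\{g\in G\mid(g,\id)\in H_L\}$; the aim is $H_L=G^{000}\rtimes\Autf{L}(\mon)$. Writing $\pi$ for the projection onto $\Aut(\mon)$: if $\tau$ fixes a small submodel $N$ of the expansion pointwise then $\tau\restr_{\mon}$ fixes $N\cap\mon$ pointwise, giving $\pi(H_L)\subseteq\Autf{L}(\mon)$; conversely, for $\sigma\in\Aut(\mon/N_0)$ the map $\Phi(e,\sigma)$ fixes pointwise the submodel generated by $N_0$ and $x_0$, so $\{e\}\rtimes\Autf{L}(\mon)\subseteq H_L$, whence $\pi(H_L)=\Autf{L}(\mon)$ and $H_L=K\rtimes\Autf{L}(\mon)$. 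A short conjugation computation (using normality of $\Autf{L}$) shows $K$ is a normal, $\Aut(\mon)$-invariant subgroup of $G$ of bounded index, so $G^{000}\subseteq K$. For the reverse inclusion I would compute that the $\Autf{L}((\mon,\mathfrak X,\cdot))$-orbit of $x_0$ is $K\cdot x_0$ and observe that the $G^{000}$-orbit equivalence relation on $\mathfrak X$ is bounded and invariant, hence refined by $\equiv_L$; thus $K\cdot x_0\subseteq G^{000}\cdot x_0$, i.e.\ $K\subseteq G^{000}$. Running the orbit computation at a general point, together with the fact that $h\equiv_L h'$ forces $h(h')^{-1}\in G^{000}$ (the coset relation being bounded and invariant), then identifies $\equiv_L\restr_{\mathfrak X}$ with the orbit relation of $G^{000}$.

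For the Kim-Pillay level I would pass to the quotient. The Lascar statement makes $\Phi$ descend to an isomorphism of $\Aut((\mon,\mathfrak X,\cdot))/\Autf{L}((\mon,\mathfrak X,\cdot))$ with $(G/G^{000})\rtimes(\Aut(\mon)/\Autf{L}(\mon))$, and in each Galois group the image of $\Autf{KP}$ is exactly the closure of the identity. Granting that under this identification the topology is the product of the logic topology on $G/G^{000}$ with the Galois topology on $\Aut(\mon)/\Autf{L}(\mon)$, the closure of the identity factors as $(G^{00}/G^{000})\rtimes(\Autf{KP}(\mon)/\Autf{L}(\mon))$ --- since $G^{00}/G^{000}$ is the closure of $1$ in $G/G^{000}$ --- and pulling back gives $\Autf{KP}((\mon,\mathfrak X,\cdot))=\Phi(G^{00}\rtimes\Autf{KP}(\mon))$; the correspondence of $\equiv_{\KP}\restr_{\mathfrak X}$ with the $G^{00}$-orbit relation then follows as before. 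I expect the main obstacle to be precisely this topological identification: one must understand the type spaces of $(\mon,\mathfrak X,\cdot)$ well enough to see that adjoining $\mathfrak X$ contributes only the $G/G^{000}$-coordinate, with its logic topology, and adds nothing to the types of tuples from $\mon$. This is where stable embeddedness of $\mon$ in the expansion carries the argument, and it is the step I would treat most carefully, rather than by the formal manipulations above.
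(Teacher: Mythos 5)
First, a point of reference: the paper does not prove this theorem at all --- it is imported verbatim from \autocite{GN08} (Propositions 3.3 and 3.4 there) --- so there is no in-paper argument to compare yours against. Judged on its own terms, your torsor computation of $\Aut((\mon,\mathfrak X,\cdot))\cong G\rtimes\Aut(\mon)$ is correct, and your treatment of the Lascar level is essentially the standard argument and is sound: the decomposition $H_L=K\rtimes\Autf{L}(\mon)$ (using that automorphisms over a small model of the expansion restrict to automorphisms over a small model of $\mon$, and that $\Phi(e,\sigma)$ fixes $(N_0,G(N_0)\cdot x_0)$ pointwise), the conjugation computation showing $K$ is an invariant subgroup of bounded index (hence $G^{000}\leq K$), and the reverse inclusion $K\leq G^{000}$ obtained by comparing the orbit $K\cdot x_0=[x_0]_{\equiv_L}$ with the bounded invariant relation $E_{G^{000},\mathfrak X}$ of Proposition~\ref{prop:grrel}. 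The identification of $\equiv_L\restr_{\mathfrak X}$ with the $G^{000}$-orbit relation via normality of $G^{000}$ also goes through.

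The gap is exactly where you locate it, and it is genuine: at the Kim--Pillay level your argument rests on the claim that $\Aut((\mon,\mathfrak X,\cdot))/\Autf{L}((\mon,\mathfrak X,\cdot))$ carries the product of the logic topology on $G/G^{000}$ and the Galois topology on $\Aut(\mon)/\Autf{L}(\mon)$. That is not a formality one may grant; it is the substantive content of \autocite[Propositions 3.3 and 3.4]{GN08}, so as written your proof of the second isomorphism reduces to citing the result being proved. Two remarks on closing it. The orbit-relation half of the KP statement needs no Galois-group topology: $\equiv_{\KP}\restr_{\mathfrak X}$ is an invariant equivalence relation on $\mathfrak X$, hence by Proposition~\ref{prop:grrel} it equals $E_{S,\mathfrak X}$ for $S=\Stab([x_0]_{\equiv_{\KP}})$, an invariant subgroup of bounded index; since $[x_0]_{\equiv_{\KP}}=S\cdot x_0$ is type-definable over a small set and $g\mapsto g\cdot x_0$ is a bijection definable over $x_0$, the subgroup $S$ is invariant and pseudo-closed, hence type-definable over $\emptyset$, so $G^{00}\leq S$; conversely $E_{G^{00},\mathfrak X}$ is bounded and type-definable, so it is refined by $\equiv_{\KP}$, giving $S\leq G^{00}$. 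For the isomorphism $\Autf{KP}((\mon,\mathfrak X,\cdot))\cong G^{00}\rtimes\Autf{KP}(\mon)$ itself, however, one really must control types in the expansion --- e.g.\ via the homeomorphism $S_{\mathfrak X}(N)\cong S_G(M)$ of Proposition~\ref{prop:grres}, which is what shows the new sort contributes exactly the $G/G^{000}$ coordinate with its logic topology --- and that is the step your proposal does not supply.
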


\begin{rem}
The isomorphisms are not canonical in general: they depend on the choice of the base point $x_0$.
\end{rem}

Until the end of this subsection, we fix a definable group $G$ and the structure $(\mon,\mathfrak{X},\cdot)$ as above. Note that a definable group is always finitely (and therefore countably) supported.

\begin{dfn}
Let $H$ be an invariant subgroup of $G$. Then $E_{H,X}$ is the relation on $\mathfrak X$ of being in the same $H$-orbit.
\end{dfn}

\begin{prop}
\label{prop:grrel}
The mapping $\Phi\colon H\mapsto E_{H,X}$ is a bijection between invariant subgroups of $G$ and invariant equivalence relations on $\mathfrak{X}$.
\end{prop}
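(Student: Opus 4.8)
The plan is to fix a base point $x_0\in\mathfrak X$ and use regularity of the left $G$-action to transport everything to the group $G$ itself. Concretely, the map $\iota\colon G\to\mathfrak X$, $\iota(g)=g\cdot x_0$, is a bijection (since $\mathfrak X$ is a principal homogeneous space), and under $\iota$ the $E_{H,X}$-class of $\iota(g_0)$ is $\{h g_0\cdot x_0 : h\in H\}=\iota(H g_0)$; that is, $\iota$ carries the relation ``same right coset of $H$'' on $G$ to $E_{H,X}$ on $\mathfrak X$. I would also record, using the description of $\Aut((\mon,\mathfrak X,\cdot))$ in Theorem~\ref{thm:phd}, how automorphisms act after transport: the internal automorphism $\sigma_g$ becomes right multiplication $h\mapsto h g^{-1}$ on $G$, while the lift of $\sigma\in\Aut(\mon)$ becomes the natural action $h\mapsto\sigma(h)$. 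Since every automorphism of the extended structure restricts to an automorphism of $\mon$ (it must preserve sorts) and, by Theorem~\ref{thm:phd}, all of them are generated by the $\sigma_g$ together with these lifts, an equivalence relation $E$ on $\mathfrak X$ is invariant precisely when its transport $\bar E:=\iota^{-1}(E)$ is invariant under all right translations of $G$ and under $\Aut(\mon)$.

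For well-definedness, $E_{H,X}$ is an equivalence relation because an orbit equivalence relation always is, and it is invariant because the action $\cdot$ is part of the structure, so any automorphism $\tau$ satisfies $\tau(h\cdot a)=\tau(h)\cdot\tau(a)$ with $\tau(h)\in H$ by $\Aut(\mon)$-invariance of $H$. Injectivity is immediate from the transport description: $H$ is recovered as $\iota^{-1}$ of the $E_{H,X}$-class of $x_0$, so distinct subgroups $H$ give distinct relations.

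The main work is surjectivity. Given an invariant equivalence relation $E$ on $\mathfrak X$, I would pass to $\bar E$ on $G$ and set $H$ to be the $\bar E$-class of the identity $e$. The key structural fact to exploit is that $\bar E$ is invariant under right translations: this forces $g_1\mathrel{\bar E}g_2\iff g_1 g_2^{-1}\mathrel{\bar E}e\iff g_1 g_2^{-1}\in H$, i.e.\ $\bar E$ is exactly ``same right coset of $H$''. From right-translation invariance together with reflexivity, symmetry and transitivity one checks directly that $H$ is a subgroup (for closure, $g_1,g_2\in H$ gives $g_1 g_2\mathrel{\bar E}g_2\mathrel{\bar E}e$; for inverses, right-translate $g\mathrel{\bar E}e$ by $g^{-1}$), and $H$ is invariant because $\bar E$ is $\Aut(\mon)$-invariant and $e$ is fixed by every automorphism. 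Transporting back through $\iota$ then gives $E=E_{H,X}$, completing surjectivity.

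The step I expect to be the crux is the correct bookkeeping of the automorphism action after transport --- in particular, recognising that the ``internal'' automorphisms $\sigma_g$ act as right translations on $G$ and that, by Theorem~\ref{thm:phd}, these together with $\Aut(\mon)$ exhaust the automorphism group. Everything else is formal once this is in place: the surjectivity argument rests entirely on right-translation invariance, which is exactly what reconstructs a subgroup from the relation, while the principal-homogeneity of $\mathfrak X$ is what makes $\iota$ a bijection and hence makes the correspondence exact.
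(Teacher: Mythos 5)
Your proposal is correct and follows essentially the same route as the paper: both fix $x_0$, use the internal automorphisms $\sigma_g$ (right translations after identifying $\mathfrak X$ with $G$ via $g\mapsto g\cdot x_0$) to show an invariant relation is determined by the class of $x_0$, recover $H$ as the (preimage of the) class of $x_0$, and use the $\Aut(\mon)$-part of the semidirect product to get invariance of $H$ and of $E_{H,X}$. The only cosmetic difference is that you transport everything to $G$ and spell out the subgroup axioms for $H$, which the paper leaves implicit by calling $H$ the setwise stabiliser of $[x_0]_E$.
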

\begin{proof}
We fix some $x_0\in \mathfrak X$, so as to apply the description of the automorphism group of $(\mon,{\mathfrak X},\cdot)$ from Theorem~\ref{thm:phd}.

First, choose  some invariant $H\leq G$. We will show that $E_{H,X}$ is invariant.
By the definition of $E_{H,X}$ and Theorem~\ref{thm:phd}, it is enough to show that for arbitrary $h\in H$, $\sigma \in \Aut(\mon)$ and $g,k\in G$, one has  $\sigma(kx_0)\Er_{H,X} \sigma(hkx_0)$ and $kgx_0\Er_{H,X} hkgx_0$. The latter is immediate by the definition of $E_{H,X}$. For the former, just see that
\[
\sigma(kx_0)=\sigma(k)x_0\Er_{H,X} \sigma(h)\sigma(k)x_0=\sigma(hkx_0),
\]
because $\sigma(h)\in H$ (by invariance of $H$).

To see that $\Phi$ is a bijection, choose an arbitrary invariant equivalence relation $E$ on $\mathfrak X$, and let $H$ be the setwise stabiliser of $[x_0]_E$. Take arbitrary $h\in H$, $\sigma\in \Aut(\mon)$. Then
\[
x_0 \Er hx_0 \implies x_0=\sigma(x_0) \Er \sigma(hx_0)=\sigma(h)x_0,
\]
therefore $\sigma(h)\in H$, and since $h$ and $\sigma$ were arbitrary, $H$ is invariant. To see that $E=E_{H,X}$, notice that for any $x_1=k_1x_0$ and $x_2=k_2x_0$ we have
\[
k_1x_0 \Er k_2x_0\iff x_0 \Er k_2k_1^{-1}x_0\iff k_2k_1^{-1}\in H\iff (\exists h\in H)\, hk_1x_0=k_2x_0. \qedhere
\]
\end{proof}

\begin{rem}
An invariant subgroup $H\leq G$ has bounded index if and only if $E_{H,X}$ is a bounded equivalence relation.
\end{rem}

\begin{prop}
\label{prop:grres}
Let $H\leq G$ be an invariant subgroup of bounded index and let $K$ be a pseudo-closed subgroup such that $H\leq K\leq G$.

Let $M\preceq \mon$ be any small model. Then, if we put $N=(M,G(M)\cdot x_0)\preceq (\mon,\mathfrak X,\cdot)$, the map $g\mapsto g\cdot x_0$ induces a homeomorphism $S_G(M)\to S_X(N)$ which takes $E_{H}^M$ to $E_{H,X}^N$ and $K_M$ to $(K\cdot x_0)_N$.

In particular:
\begin{itemize}
\item
$E_{H,X}$ is closed or $F_\sigma$ if and only if $E_H$ is (respectively), 
\item
if the language and $M$ are both countable, while $H$ is $F_\sigma$ (or even Borel), then the Borel cardinalities of $E_H\restr_{K}$ and $E_{H,X}\restr_{K\cdot x_0}$ coincide.
\end{itemize}
\end{prop}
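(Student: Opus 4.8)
The plan is to realise the map $\bar\Phi\colon \tp(g/M)\mapsto \tp(g\cdot x_0/N)$ as a composition of two homeomorphisms $S_G(M)\xleftarrow{\rho}S_G(N)\xrightarrow{\beta}S_X(N)$, where $\rho$ is the restriction map and $\beta$ is induced by the base-point bijection, and then to transport the relation and the subgroup along $\bar\Phi=\beta\circ\rho^{-1}$.

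First I would handle $\rho$. The restriction $\rho\colon S_G(N)\to S_G(M)$ is always a continuous surjection, so it suffices to prove it injective; since both spaces are compact Hausdorff, injectivity upgrades it to a homeomorphism. Injectivity says that for $g,g'\in G$ we have $\tp(g/N)=\tp(g'/N)$ in $(\mon,\mathfrak X,\cdot)$ iff $\tp(g/M)=\tp(g'/M)$ in $\mon$, where only ``$\Leftarrow$'' needs work. This is exactly where Theorem~\ref{thm:phd} enters: given $\sigma\in\Aut(\mon/M)$ with $\sigma(g)=g'$, the automorphism $\tau$ of $(\mon,\mathfrak X,\cdot)$ determined by $\tau\restr_\mon=\sigma$ and $\tau(h\cdot x_0)=\sigma(h)\cdot x_0$ (the image of $\sigma$ under the section $\Aut(\mon)\hookrightarrow G\rtimes\Aut(\mon)$ fixing $x_0$) fixes $x_0$, fixes $M$ pointwise, and hence fixes $G(M)\cdot x_0$ pointwise; so $\tau$ fixes $N$ pointwise while sending $g$ to $g'$. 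For $\beta$, simple transitivity of the action makes $g\mapsto g\cdot x_0$ an $N$-definable bijection $G\to\mathfrak X$ (note $x_0=e\cdot x_0\in N$), and an $N$-definable bijection induces a homeomorphism $\beta\colon S_G(N)\to S_X(N)$, $\tp(g/N)\mapsto\tp(g\cdot x_0/N)$. Composing yields the asserted homeomorphism with $\bar\Phi(\tp(g/M))=\tp(g\cdot x_0/N)$.

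Next I would transport the structure. On points, a direct coset computation gives $a\,E_H\,b\iff (a\cdot x_0)\,E_{H,X}\,(b\cdot x_0)$: indeed $E_H$ is the relation $ab^{-1}\in H$, the $H$-orbit of $g\cdot x_0$ is $(Hg)\cdot x_0$, and $H=H^{-1}$ reconciles the two (this is precisely why right cosets were chosen). Since $\bar\Phi$ is induced by $g\mapsto g\cdot x_0$, one has $a\models p\iff a\cdot x_0\models\bar\Phi(p)$ for $p\in S_G(M)$, so the defining condition of $E^M$ transports verbatim, giving $p\,E_H^M\,q\iff\bar\Phi(p)\,E_{H,X}^N\,\bar\Phi(q)$. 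For the subgroup, $K$ is $M$-invariant by Corollary~\ref{cor:satinvg} (as $H\le K$ forces $G^{000}\le K$), so $K_M$ is well defined and $\bar\Phi(K_M)=\{\tp(g\cdot x_0/N):g\in K\}=(K\cdot x_0)_N$ directly.

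The two itemised consequences then follow formally. For the first, combining Proposition~\ref{prop:eqcmp} (applied once in $\mon$ and once in $(\mon,\mathfrak X,\cdot)$) with the fact that the homeomorphism $\bar\Phi$ preserves the classes closed and $F_\sigma$ yields $E_H$ closed/$F_\sigma\iff E_H^M$ closed/$F_\sigma\iff E_{H,X}^N$ closed/$F_\sigma\iff E_{H,X}$ closed/$F_\sigma$. For the second, when the language and $M$ (hence $N$) are countable, $S_G(M)$ and $S_X(N)$ are standard Borel and $\bar\Phi$ is a Borel isomorphism; by Lemma~\ref{lem:compgrp} and Proposition~\ref{prop:cartdf} the Borel cardinalities of $E_H\restr_K$ and $E_{H,X}\restr_{K\cdot x_0}$ are computed by $E_H^M\restr_{K_M}$ and $E_{H,X}^N\restr_{(K\cdot x_0)_N}$, and since $\bar\Phi$ restricts to a homeomorphism $K_M\to(K\cdot x_0)_N$ intertwining these relations it witnesses their Borel bireducibility. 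I expect the only genuine difficulty to be the injectivity of $\rho$, i.e.\ verifying via Theorem~\ref{thm:phd} that adjoining the torsor sort together with the base point $x_0$ does not split the $M$-types of elements of $G$; everything after that is bookkeeping with definable bijections and the already-established transfer results.
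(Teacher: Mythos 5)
Your proposal is correct and follows essentially the same route as the paper: the paper's proof also factors the map as the definable-bijection-induced homeomorphism $S_G(N)\to S_X(N)$ composed with (the inverse of) the restriction homeomorphism $S_G(N)\to S_G(M)$, and then transports $E_H$ and $K$ along it. You simply spell out the details the paper dismisses as ``easy to see'' and ``now clear'' --- in particular the injectivity of the restriction map via lifting automorphisms through Theorem~\ref{thm:phd}, which is indeed the one point requiring an actual argument.
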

\begin{proof}
The map $f\colon S_G(N)\to S_X(N)$ defined by $f(\tp(g/N))=\tp(g\cdot x_0/N)$ is a homeomorphism (because it is induced by an $N$-definable bijection), and $f$ takes $K_N$ to $(K\cdot x_0)_N$ and $E_H^N$ to $E_{H,X}^N$.
It is also easy to see that the restriction map $g\colon S_G(N)\to S_G(M)$ (with the latter considered in the original structure $\mon$) is also a homeomorphism, which takes $E^N_H$ to $E^M_H$ and $K_N$ to $K_M$. The rest is now clear.
\end{proof}

Using this language, we have the following corollary of Theorem~\ref{thm:twN}:
\begin{cor}
\label{cor:sumt}
Suppose $G$ is a definable group and $H\leq G$ is a type-definable subgroup. Suppose in addition that $H=\bigcup C_n$, where $C_n$ are type-definable, symmetric sets containing $e$ and such that $C_n^2\subseteq C_{n+1}$. Then for some $n$ we have $H=C_n$.
\end{cor}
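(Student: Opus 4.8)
The plan is to reduce the statement to an application of Theorem~\ref{thm:twN} via the dictionary set up in Propositions~\ref{prop:grrel} and~\ref{prop:grres}. First I would observe that the hypothesis exhibits $H$ as an increasing $F_\sigma$ union: after replacing each $C_n$ with $\bigcup_{k\le n} C_k$ (which remains type-definable, symmetric, and contains $e$, and preserves the condition $C_n^2\subseteq C_{n+1}$), we may assume the sequence $C_n$ is increasing. Then $H=\bigcup_n C_n$ is $F_\sigma$ as a subset of $G$, so by Lemma~\ref{lem:compgrp} the coset relation $E_H$ is an $F_\sigma$ equivalence relation; since $H$ is type-definable it is in particular bounded, so $E_H$ is a bounded equivalence relation.

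Next I would pass to the relation $E_{H,X}$ on the affine copy $\mathfrak{X}$ using the structure $(\mon,\mathfrak X,\cdot)$ from Theorem~\ref{thm:phd}. By Proposition~\ref{prop:grrel}, $E_{H,X}$ is a bounded invariant equivalence relation on $\mathfrak X$, and by Proposition~\ref{prop:grres} it is $F_\sigma$ because $E_H$ is. The key gain is that $E_{H,X}$ refines $\equiv$: since $H$ is type-definable of bounded index it contains $G^{000}$, so each $E_{H,X}$-class is a union of $G^{000}$-orbits, hence invariant over $\emptyset$, and in particular contained in a single complete type (this is the analogue of Corollary~\ref{cor:satinvg} in the $\mathfrak X$-picture). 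Because $\Aut(\mon)$ acts transitively on $\mathfrak X$, I may restrict attention to a single complete type $p$ and set $Y=p(\mon)$, which is type-definable and hence pseudo-closed, and which is $E_{H,X}$-saturated since $E_{H,X}$ refines $\equiv$. The chain $C_n$ furnishes a normal form for $E_{H,X}$: defining $\Phi_n$ via the type-definable condition ``differ by an element of $C_n$,'' the requirements $C_n^2\subseteq C_{n+1}$, symmetry, and $e\in C_n$ translate exactly into the triangle inequality, symmetry, and coincidence axioms for the induced metric.

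Now I would apply Theorem~\ref{thm:twN} to $E_{H,X}$ on $Y$ with this normal form. The dichotomy gives either that $E_{H,X}$ is equivalent on $Y$ to some $\Phi_n$, or that $\lvert Y/E_{H,X}\rvert\ge 2^{\aleph_0}$. But $H$ has bounded index, so $E_{H,X}$ has boundedly many classes and the second alternative is impossible. Hence $E_{H,X}$ agrees with $\Phi_n$ on $Y$ for some $n$; translating back through the bijection of Proposition~\ref{prop:grrel}, this says precisely that on the coset structure two points differing by an element of $H$ already differ by an element of $C_n$, i.e.\ $H=C_n$. The main obstacle I anticipate is the bookkeeping needed to check that the $C_n$ genuinely yield a normal form in the exact sense of Definition~\ref{dfn:nfrm} — in particular confirming that the induced $d$ satisfies the triangle inequality from $C_n^2\subseteq C_{n+1}$ and that $\Phi_0$ is forced to be equality — together with verifying that the reduction to a single complete type loses no generality; the descriptive-set-theoretic content is entirely absorbed by Theorem~\ref{thm:twN}.
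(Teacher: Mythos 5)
There is a genuine gap in the step where you rule out the second horn of Newelski's dichotomy. You apply Theorem~\ref{thm:twN} with $Y=p(\mon)=\mathfrak X$, i.e.\ the whole type, and claim the alternative $\lvert Y/E_{H,X}\rvert\geq 2^{\aleph_0}$ is impossible ``since $H$ has bounded index''. This fails twice over. First, the corollary does not assume $H$ has bounded index, and type-definability of a subgroup does not imply bounded index, so your earlier assertion that ``since $H$ is type-definable it is in particular bounded'' is a non sequitur (this also undercuts your argument that $E_{H,X}$ refines $\equiv$ via $G^{000}\leq H$ -- though that particular point is harmless, since $\mathfrak X$ is a single complete type anyway, $\Aut((\mon,\mathfrak X,\cdot))\cong G\rtimes\Aut(\mon)$ acting transitively on it). Second, and more fundamentally, even if $H$ did have bounded index, ``bounded'' in this paper means smaller than the saturation cardinal $\kappa$, which is entirely compatible with having $2^{\aleph_0}$ or more classes (e.g.\ $\equiv_{\KP}$ typically has continuum many classes and is bounded). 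So the dichotomy applied to all of $\mathfrak X$ simply does not force $E_{H,X}$ to coincide with some $\Phi_n$.

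The fix is to apply the dichotomy to a single class rather than to the whole type: take $Y$ to be one coset $Hx_0$, which is pseudo-closed (as $H$ is type-definable) and $E_{H,X}$-saturated, so that $\lvert Y/E_{H,X}\rvert=1<2^{\aleph_0}$ and the second alternative is vacuously excluded; then finite diameter of that class with respect to the normal form $\biglor_n C_{n,X}$ gives $H\subseteq C_n$. This is exactly what the paper does, packaged as Corollary~\ref{cor:diam} (pseudo-closed classes have finite diameter with respect to any normal form), applied to the type-definable relation $E_{H,X}$ on the single type $\mathfrak X$. Your reduction to $\mathfrak X$ and your verification that the $C_{n,X}$ form a normal form match the paper and are fine; only the final application of the dichotomy needs to be relocated from the whole type to a single class, after which no boundedness hypothesis is needed at all.
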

\begin{proof}
Consider the equivalence relation $E_{H,X}$. Then $C_{n,X}:=\{(x,x')\in \mathfrak X^2\mid x\in C_n x'\}$ give us a normal form for this relation, which is type-definable and only defined on a single type, so the result follows from Corollary~\ref{cor:diam}.
\end{proof}

We finish with an observation that allows us to easily see that some $E_{H,X}$ are orbital.

\begin{prop}
\label{prop:grnrm}
Suppose $H$ is a normal, invariant subgroup of $G$. Then $E_{H,X}$ is orbital as witnessed by $H\leq\Aut((\mon,\mathfrak X,\cdot))$.
\end{prop}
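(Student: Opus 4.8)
The plan is to use the explicit description of $\Aut((\mon,\mathfrak X,\cdot))$ supplied by Theorem~\ref{thm:phd}. Fix the base point $x_0$ used there, so that every element of $\mathfrak X$ is uniquely of the form $kx_0$ with $k\in G$, and recall that under the isomorphism $\Aut((\mon,\mathfrak X,\cdot))\cong G\rtimes\Aut(\mon)$ the subgroup $H\leq G$ is realised as the group of automorphisms $\sigma_h$ (for $h\in H$) which fix $\mon$ pointwise and act on $\mathfrak X$ by $\sigma_h(kx_0)=(kh^{-1})x_0$. Writing $\Gamma=\{\sigma_h\mid h\in H\}$ for this copy of $H$, I must verify the two clauses in the definition of an orbital relation: that $\Gamma$ preserves every $E_{H,X}$-class setwise, and that it acts transitively on each such class.

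The heart of the argument is a bookkeeping computation reconciling the two actions of $G$ on $\mathfrak X$ that are in play: the original left action of the principal homogeneous space, in terms of which $E_{H,X}$ is \emph{defined}, and the automorphism action $\sigma_g$, which behaves instead like a right translation. Concretely, the $E_{H,X}$-class of a point $kx_0$ is its left $H$-orbit $(Hk)x_0=\{(hk)x_0\mid h\in H\}$, whereas its $\Gamma$-orbit is $\{\sigma_h(kx_0)\mid h\in H\}=(kH)x_0$. These two sets coincide precisely when $Hk=kH$, that is, exactly when $H$ is normal. Thus normality is what makes the $\Gamma$-orbits agree with the $E_{H,X}$-classes, and it is responsible for both required clauses at once.

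In more detail, for preservation I would check that $\sigma_h(kx_0)=(kh^{-1})x_0$ lies in $(Hk)x_0$; this amounts to $kh^{-1}k^{-1}\in H$, which holds by normality for \emph{every} $k$, so $\Gamma$ stabilises all classes simultaneously. For transitivity, given two points $(h_1k)x_0$ and $(h_2k)x_0$ of one class, I would solve $\sigma_{h'}((h_1k)x_0)=(h_2k)x_0$ for some $h'\in H$: the equation reduces to $h'=k^{-1}h_2^{-1}h_1k$, which again belongs to $H$ by normality. I do not expect a genuine obstacle here; the only thing requiring care is not conflating the two actions of $G$ (the principal-homogeneous-space left action versus the right-translation automorphism action $\sigma_g$), and keeping the parameter $k$ universally quantified so that all classes -- not merely the class of $x_0$ -- are treated uniformly.
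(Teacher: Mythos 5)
Your proposal is correct and follows essentially the same route as the paper: both use the description of $\Aut((\mon,\mathfrak X,\cdot))$ from Theorem~\ref{thm:phd} and the observation that the $\Gamma$-orbit of $kx_0$ is $(kH)x_0$ while its $E_{H,X}$-class is $(Hk)x_0$, with normality making these coincide. The paper compresses your two separate verifications (class preservation and transitivity) into the single chain of equalities $H*(g\cdot x_0)=(gH)\cdot x_0=(Hg)\cdot x_0=[x]_{E_{H,X}}$, but the content is identical.
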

\begin{proof}
Consider the action $*$ of $H$ on $(\mon,\mathfrak X,\cdot)$ by automorphisms. Then – because $H$ is a normal subgroup of $G$ – we have for any $x=g\cdot x_0\in \mathfrak X$ that
\[
H * (g\cdot x_0)= (gH^{-1})\cdot x_0= (gH)\cdot x_0 =(Hg)\cdot x_0=H\cdot (g\cdot x_0)=[x]_{E_{H,X}},
\]
and hence $H\leq G\rtimes \Aut(\mon)=\Aut((\mon,\mathfrak X,\cdot))$ witnesses that $E_{H,X}$ is orbital.
\end{proof}

\begin{rem}
The converse of the previous proposition is not true: if we have $G=S_3$, $H=\langle (1,2)\rangle$ and $\Aut(\mon)$ acting on $G$ in such a way that any $\sigma\in \Aut(\mon)$ acts on $G$ either trivially or by conjugation by $(1,2)$, then although $H$ is not normal, $E_{H,X}$ is orbital: for $\sigma\in \Aut(\mon)$ acting nontrivially on $G$ we have
\[
((1,2)^{-1},\sigma)(g\cdot x_0)=((1,2)\cdot g\cdot (1,2)^{-1})\cdot (1,2)\cdot x_0=(1,2)\cdot (g\cdot x_0).
\]
\end{rem}

\section{The technical theorem}
\subsection{The countable case}
As before, when $E$ is an invariant, bounded equivalence relation, we denote by $E^M$ the induced equivalence relation on $S(M)$. For the statement of the next corollary, we need to extend the notion of distance to the type spaces.

\begin{dfn}
If $E$ is an $F_\sigma$ equivalence relation induced by a metric $d$ (coming from some normal form), then we also denote by $d_M$ the induced distance on $S(M)$, i.e.\
\[
d_M(p_1,p_2)=\min_{a_1\models p_1,a_2\models p_2} d(a_1,a_2).
\]
\end{dfn}

\begin{rem}
The classes of $E^M$ are precisely the ``metric components'' of $d_M$, i.e.\ the maximal sets of types which are pairwise at finite distance from one another in the sense of $d_M$, though $d_M$ might not satisfy the triangle inequality, so it is not in general a metric.
\end{rem}

We will use the next theorem to show Theorem~\ref{thm:mainA}.
\begin{thm}[based on {\autocite[Corollary 2.3]{KMS14}}] \label{thm:tool}
Suppose we have:
\begin{itemize}
\item
a countable theory $T$ with monster model $\mon$,
\item
a countable model $M\preceq \mon$,
\item
a type-definable, countably supported set $X$,
\item
a bounded $F_\sigma$ equivalence relation $E$ on $X$, with normal form $\biglor_n\Phi_n$, inducing metric $d$,
\item
a pseudo-closed and $E$-saturated $Y\subseteq X$.
\end{itemize}
Assume in addition that there is some $p\in Y_M\subseteq S_X(M)$ such that for every formula $\varphi\in p$ with parameters in $M$, and for all $N\in \N$, there is some $\sigma\in \Aut(\mon)$ such that:
\begin{enumerate}
\item
$\sigma$ fixes $M$ and all $E$-classes in $Y$ setwise (and therefore $Y$ itself as well),
\item
$\varphi\in \sigma(p)$ and $N<d_M(\sigma(p),p)$.
\end{enumerate}
Then there is a continuous, injective homomorphism
\[
(2^\N,{\EZ},\neg {\EZ})\to(Y_M,E^M\restr_{Y_M},\neg (E^M\restr_{Y_M})).
\]
In particular, $E^M\restr_{Y_M}$ is not smooth.
\end{thm}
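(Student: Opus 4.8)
The plan is to build a continuous injective reduction of $\EZ$ to $E^M\restr_{Y_M}$ by a Cantor-scheme/fusion argument in the spirit of \autocite[Corollary 2.3]{KMS14}; since $\EZ$ is non-smooth and non-smoothness is preserved downward under Borel reductions, and a continuous injection that is a homomorphism for both $E^M$ and its negation is in particular such a reduction, this yields the conclusion. The engine is the hypothesis, read as a \emph{spreading lemma}: first note that any $\sigma$ fixing $M$ and all $E$-classes of $Y$ setwise satisfies $\sigma(p)\mathrel{E^M}p$ (for $a\models p$ the points $a,\sigma(a)$ share an $E$-class), so every type we produce stays in the single $E^M$-class of $p$; the content of the hypothesis is that inside this class we can find types sitting \emph{topologically} inside any prescribed neighbourhood $[\varphi]$ of $p$ while being $d_M$-\emph{far} from $p$. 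This simultaneous ``close in $S_X(M)$, far in $d_M$'' is exactly what lets the topology carry continuity and the distance carry the $E^M$-pattern. A key preliminary observation, used throughout, is that since each $\Phi_n$ is type-definable, the set $\{(q,q')\in S_X(M)^2 : d_M(q,q')\le n\}$ is the image of the closed set $\Phi_n\subseteq S_{X^2}(M)$ under the closed projection of Proposition~\ref{prop:eqcmp}, hence closed; thus $d_M$ is lower semicontinuous and each $\{d_M>n\}$ is open.

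Concretely, I would assign to each node $s\in 2^{<\N}$ an automorphism $\sigma_s$ fixing $M$ pointwise and every $E$-class of $Y$ setwise, maintaining three invariants by recursion on $|s|$: (nesting) the basic clopen neighbourhoods of $\sigma_s(p)$ shrink along branches to singletons, so that $\sigma_{\alpha\restriction n}(p)$ converges to a unique $f(\alpha)\in Y_M$ for each $\alpha\in 2^\N$; (splitting) the two children land in disjoint clopen sets, forcing $f$ injective; and (distance) suitable pairs of nodes are pushed $d_M$-apart using the spreading lemma. Continuity and injectivity of $f$ are then routine Cantor-scheme bookkeeping and I would only sketch them.

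The two implications matching $E^M$ to $\EZ$ are handled via the semicontinuity facts. For $\alpha\EZ\beta$, agreeing past coordinate $k$, I arrange the scheme so that flips at the (finitely many) coordinates $<k$ contribute a $d_M$-cost bounded by some $B_k$ \emph{uniformly} in the agreeing high part; then all approximations $(\sigma_{\alpha\restriction m}(p),\sigma_{\beta\restriction m}(p))$ lie in the closed set $\{d_M\le B_k\}$, so by lower semicontinuity the limit pair does too, giving $d_M(f(\alpha),f(\beta))\le B_k<\infty$, i.e.\ $f(\alpha)\mathrel{E^M}f(\beta)$. For $\alpha\not\EZ\beta$, differing at infinitely many coordinates, I use clause (2) at each relevant split to force $d_M$ of the current approximations above a prescribed bound $N$; since $\{d_M>N\}$ is open, I can certify this by a basic clopen (hence closed) box containing the pair and contained in $\{d_M>N\}$, and I lock both branches inside this box forever. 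The limit pair then remains in the box, so $d_M(f(\alpha),f(\beta))>N$; doing this for $N\to\infty$ gives infinite distance, i.e.\ $f(\alpha)\not\mathrel{E^M}f(\beta)$.

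The hard part will be the negative direction's passage to the limit. Lower semicontinuity bounds the limiting distance from \emph{above}, which is the wrong direction here, so one cannot merely drive the approximate distances to infinity and pass to the limit; a too-small limit distance could survive even though every finite stage is far apart (this is precisely why $E^M$, being only $F_\sigma$, fails to be closed). The remedy is the clopen lock-in above, certifying each lower bound at a finite stage by a closed condition the limit must inherit. Organising this so that, for \emph{every} pair of branches differing infinitely often, infinitely many such lock-ins (with bounds tending to infinity) coexist with the shrinking and splitting requirements is the genuinely delicate fusion; I would manage it by a priority-style enumeration of the distance, splitting, and convergence requirements, exactly as in \autocite{KMS14}, discharging one requirement at each level of the tree.
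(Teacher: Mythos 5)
Your overall architecture is the right one and matches the paper's, which simply runs the proof of \autocite[Corollary 2.3]{KMS14} with $\Gamma$ taken to be the group of automorphisms fixing $M$ and all $E$-classes in $Y$ setwise, and with $d_M$ in place of the Lascar distance. Your preliminary observations (each $\{d_M\le n\}$ is closed because $\Phi_n$ is type-definable and the relevant restriction maps are closed; every type produced stays in the single class $[p]_{E^M}\subseteq Y_M$; a continuous injective homomorphism for the pair $(\EZ,\neg\EZ)$ is a reduction, whence non-smoothness) are all correct, and your treatment of the negative direction --- certifying $d_M>N$ at a finite stage by an open box disjoint from the closed set $\{d_M\le N\}$ and locking both branches inside it --- is exactly the KMS14 mechanism (arranging $\phi(\eta)\times\phi(\eta')\cap R_n=\emptyset$ whenever $\eta(n)\neq\eta'(n)$, with the $R_n$ increasing).

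The gap is in your positive direction. You assert that flips at the coordinates below $k$ ``contribute a $d_M$-cost bounded by some $B_k$ uniformly in the agreeing high part'' and then pass to the limit using closedness of $\{d_M\le B_k\}$. That uniformity is the entire content of this direction and is never justified; it is also not how the argument you are modelling works. In the KMS14 construction the node automorphisms are composed so that two $\EZ$-equivalent branches differ by a single fixed $\gamma\in\Gamma$, determined by the finitely many coordinates where they disagree, so that $f(\eta')=\gamma\cdot f(\eta)$; since $\gamma$ fixes $M$ and preserves every $E$-class in $Y$ setwise, $f(\eta)\mathrel{E^M}f(\eta')$ follows with no metric estimate at all. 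By contrast, a fixed class-preserving automorphism can move types arbitrarily far in the sense of $d_M$ (the hypothesis of the theorem is precisely that such automorphisms exist), so there is no reason for your $B_k$ to exist; and ``summing the costs of flips'' additionally leans on triangle-inequality-type control of $d_M$ that the paper explicitly declines to guarantee. The repair is to drop the distance bound in the positive direction and organise the scheme so that $\EZ$-equivalent branches land in the same $\Gamma$-orbit; with that replacement your outline becomes the paper's proof.
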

\begin{proof}
The proof is the same as that of \autocite[Corollary 2.3]{KMS14}. The only difference is that for $\Gamma$ we take the group of automorphisms of $\mon$ which fix $M$ and all $E$-classes in $Y$ setwise (instead of all Lascar strong types as there), and we use $d_M$ instead of the Lascar distance. Note that $Y_M$ is Polish by Corollary~\ref{cor:bcomp} and Corollary~\ref{cor:satinv}.
\end{proof}

The above implies the next theorem. As mentioned in the introduction, a similar theorem has been proved, independently, in \autocite{KM14} using different methods. The proof we give here is a generalization of the main result of \autocite{KMS14}, where the relation in question is the Lascar strong type.

\begin{thm}[based on {\autocite[Theorem 4.13]{KMS14}}] \label{thm:mainA}
We are working in the monster model $\mon$ of a complete, countable theory. Suppose we have:
\begin{itemize}
\item
a type-definable, countably supported set $X$,
\item
a bounded, $F_\sigma$ equivalence relation $E$ on $X$, which is orbital on types,
\item
a pseudo-closed and $E$-saturated set $Y\subseteq X$,
\item
an $E$-class $C\subseteq Y$ with infinite diameter with respect to some normal form of $E$,
\end{itemize}
Then $E\restr_Y$ is not smooth.
\end{thm}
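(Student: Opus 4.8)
The plan is to deduce the theorem from Theorem~\ref{thm:tool} by verifying its hypothesis for a carefully chosen countable model $M$ and type $p\in Y_M$, with the required automorphisms supplied by orbitality on types. First I would reduce to the case in which $Y$ lies inside a single complete type. Since $E$ is orbital on types it refines $\equiv$, so $C\subseteq p_0(\mon)$ for $p_0=\tp(a/\emptyset)$ with $a\in C$. Replacing $Y$ by $Y':=Y\cap p_0(\mon)$ keeps it pseudo-closed (intersection with a closed set) and $E$-saturated (as $p_0(\mon)$ is $E$-saturated because $E$ refines type), and $C\subseteq Y'$ still has infinite diameter; moreover $Y'_M$ is a closed, $E^M$-invariant subset of $Y_M$, so non-smoothness of $E\restr_{Y'}$ forces non-smoothness of $E\restr_Y$. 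Thus I may assume $Y\subseteq p_0(\mon)$. Orbitality on types then yields a single group $\Gamma_0\le\Aut(\mon)$ (which I take normal) fixing every $E$-class in $p_0(\mon)$, in particular every $E$-class in $Y$, setwise, and acting transitively on each. By Proposition~\ref{prop:nfrm} I also fix a normal form whose induced metric satisfies $d\le d_L$.

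Second, I would transfer the infinite diameter from $d$ to the induced distance $d_M$ on types. As $d$ is $\Aut(\mon)$-invariant and $\Gamma_0$ acts transitively on $C$, the radius $r(c'):=\sup_{c\in C}d(c',c)$ is independent of $c'\in C$ and equals $\operatorname{diam}(C)=\infty$, so far points are available from every base point (cf.\ Fact~\ref{fct:stdiam}). Moreover, over a model $M$ any two realisations of the same type are at $d_L$-distance bounded by a fixed constant (at most $2$), hence at $d$-distance at most that constant; therefore $d_M(\tp(c_1/M),\tp(c_2/M))\ge d(c_1,c_2)-O(1)$, and so the $E^M$-class $C_M$ has infinite $d_M$-radius from each of its points.

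The heart of the matter, and the step I expect to be the main obstacle, is to produce a countable model $M$ with $a\notin M$ and a type $p=\tp(a/M)\in Y_M$ such that for every $\varphi\in p$ and every $N$ there is $\sigma\in\Gamma_0\cap\Stab(M)$ with $\varphi\in\sigma p$ and $d_M(\sigma p,p)>N$; here $\Stab(M)$ is the setwise stabiliser, which is exactly what lets $\sigma$ act on $Y_M$, while membership in $\Gamma_0$ guarantees that $\sigma$ fixes all $E$-classes in $Y$ setwise. The tension is that the transitivity furnished by $\Gamma_0$ moves $M$, whereas Theorem~\ref{thm:tool} needs automorphisms stabilising $M$ that also return $\sigma p$ into the prescribed neighbourhood $[\varphi]$ of $p$; neither normality of $\Gamma_0$ nor fixing $M$ pointwise (which would fix $p$ and annihilate all $d_M$-distance) resolves this directly. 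Following the pattern of \autocite{KMS14}, I would build $M$ together with a countable recurrent family of automorphisms by an $\omega$-length back-and-forth: at each stage I use transitivity of $\Gamma_0$ to send $a$ to a point of $C$ at $d$-distance $>N$, strong homogeneity of $\mon$ to realise the resulting finite partial map while pinning the finitely many parameters seen so far (securing $\varphi\in\sigma p$), and then Löwenheim--Skolem closure to absorb the images into $M$, keeping it invariant under the countable group generated. The estimate $d\le d_L$ of the previous step converts the $d$-distances into $d_M$-distances, and Proposition~\ref{prop:cartdf} legitimises this freedom to design $M$, since smoothness does not depend on the countable model.

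Finally, with the hypothesis of Theorem~\ref{thm:tool} verified, I obtain a continuous injective homomorphism $(2^\N,\EZ)\to(Y_M,E^M\restr_{Y_M})$, so that $E^M\restr_{Y_M}$, i.e.\ $E\restr_Y$, is not smooth.
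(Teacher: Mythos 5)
Your opening and closing reductions coincide with the paper's: restrict $Y$ to the complete type of $C$ so that orbitality on types yields a single witnessing group $\Gamma$, pass to a normal form with $d\le d_L$ via Proposition~\ref{prop:nfrm} (here you should also invoke Corollary~\ref{cor:diam} to know that $C$ still has infinite diameter with respect to this \emph{new} normal form --- the hypothesis only gives infinite diameter for \emph{some} normal form), observe that $d_M\ge d-O(1)$ on realisations of a fixed type over $M$, and aim at Theorem~\ref{thm:tool}. All of that is fine and is exactly the route the paper takes.

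The gap is in the step you yourself flag as the main obstacle, and your proposed fix does not close it. The hypothesis of Theorem~\ref{thm:tool} requires that \emph{every} formula $\varphi\in p$ admit, for every $N$, some $\sigma\in\Gamma\cap\Stab(M)$ with $\varphi\in\sigma(p)$ and $d_M(\sigma(p),p)>N$. The obstruction is not merely that the transitivity of $\Gamma$ moves $M$: it is that $\varphi(x,\bar m)\cap C$ may have \emph{finite} $d$-diameter even though $C$ does not, in which case no automorphism whatsoever (let alone one in $\Gamma\cap\Stab(M)$) can land in $[\varphi]$ while moving $p$ far away, since every $\sigma\in\Gamma$ keeps $\sigma(a)$ inside $C$. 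Your back-and-forth does not address this: picking $\tau\in\Gamma$ with $d(a,\tau(a))>N$ and then ``pinning the parameters by strong homogeneity'' presupposes that the partial map $a\mapsto\tau(a)$, $\bar m\mapsto\bar m$ is elementary (it need not be, since $\tau$ does not fix $\bar m$, so $\tau(a)$ may fail $\varphi(x,\bar m)$), and even when an automorphism realising such a map exists it has no reason to lie in $\Gamma$, i.e.\ to preserve all $E$-classes in $Y$ setwise. Choosing $M$ and $p$ so that every formula of $p$ has realisations in $C$ arbitrarily $d$-far from $a$ that are reachable by class-preserving, $M$-stabilising automorphisms is precisely the content of the generic/proper-formula machinery in the proof of Theorem 4.13 of \autocite{KMS14}; the paper's proof consists of importing that machinery wholesale, observing only that it survives the replacement of $\Autf{L}(\mon)$ by $\Gamma$ (because $\Gamma$ acts transitively on $C$) and of $d_L$ by $d$. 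Without reproducing or correctly adapting that argument, the proposal does not establish the hypothesis of Theorem~\ref{thm:tool}.
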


\begin{proof}
By Proposition~\ref{prop:nfrm} and Corollary~\ref{cor:diam}, we can choose  a normal form for $E$ such that the induced distance $d$ satisfies $d\leq d_L$, with respect to which $C$ has infinite diameter.
We can also assume that $X$ is the complete type containing $C$ (by restricting $Y$ to this type), so that $E$ is orbital as witnessed by some group $\Gamma$.

Then we proceed as in Theorem 4.13 of \autocite{KMS14} (aiming to use Theorem~\ref{thm:tool}), only instead of $\Autf L(\mon)$ we use $\Gamma$ (note that all the facts about generic and proper types and formulas from \autocite{KMS14} still hold with $\Gamma$ replacing $\Autf L(\mon)$, because $\Gamma$ acts transitively on $C$), and instead of Lascar distance we use $d$.
\end{proof}

\begin{rem}
We can always take for $\Gamma$ the group of all automorphisms preserving $E$-classes setwise. (In which case $\Gamma\unlhd\Aut(\mon)$.)
\end{rem}

The next corollary can be seen as a strengthening of Theorem~\ref{thm:twN} in case of $E$ which are orbital on types (because a relation with countably many classes is smooth).
\begin{cor}\label{cor:maincont2}
Assume that the language is countable. Suppose $E$ is a bounded, $F_\sigma$ and orbital on types equivalence relation on a type-definable and countably supported set $X$. Let $a\in X$ be arbitrary, and assume that $Y\subseteq [a]_\equiv$ is $E$-saturated, pseudo-closed with $a\in Y$. Fix any normal form $\biglor_n\Phi_n$ for $E$. Then the following are equivalent:
\begin{enumerate}
\item
$E\restr_Y$ is smooth,
\item
$E\restr_{[a]_\equiv}$ is type-definable,
\item
all $E$-classes in $[a]_{\equiv}$ have finite diameter with respect to $\biglor_n\Phi_n$,
\item
all $E$-classes in $[a]_{\equiv}$ are pseudo-closed,
\item
$[a]_E$ has finite diameter with respect to $\biglor_n\Phi_n$,
\item
\label{it:apcl}
$[a]_E$ is pseudo-closed.
\end{enumerate}
\end{cor}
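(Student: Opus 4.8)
The plan is to prove the cycle of implications $(1)\Rightarrow(5)\Rightarrow(3)\Rightarrow(2)\Rightarrow(1)$ and then to attach conditions $(4)$ and $(6)$ as class-by-class reformulations. Throughout I use that $E$ refines $\equiv$ (since it is orbital on types) and that $[a]_\equiv$ is a single complete $\emptyset$-type, so that Fact~\ref{fct:stdiam} applies: all $E$-classes contained in $[a]_\equiv$ share the same diameter with respect to $\biglor_n\Phi_n$. I also record at the outset that $[a]_E\subseteq Y$, because $a\in Y$ and $Y$ is $E$-saturated.

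For the three ``easy'' forward implications I would argue as follows. For $(5)\Rightarrow(3)$, Fact~\ref{fct:stdiam} gives that every $E$-class in $[a]_\equiv$ has the same diameter as $[a]_E$, so finiteness of the latter transfers to all of them. For $(3)\Rightarrow(2)$, if every class in $[a]_\equiv$ has diameter at most their common (finite) value $n$, then for $b,c\in[a]_\equiv$ one has $b\Er c$ iff $d(b,c)\le n$ iff $\models\Phi_n(b,c)$, so $E\restr_{[a]_\equiv}$ coincides with $\Phi_n\restr_{[a]_\equiv}$ and is therefore type-definable. For $(2)\Rightarrow(1)$, since $Y\subseteq[a]_\equiv$ is pseudo-closed and $E$-saturated, $E\restr_Y=(E\restr_{[a]_\equiv})\cap Y^2$ is relatively type-definable on $Y$, whence $E\restr_Y$ is smooth by the second part of Fact~\ref{fct:tdsmt}.

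The one substantial step is $(1)\Rightarrow(5)$, which I expect to be the crux; it is the contrapositive of Theorem~\ref{thm:mainA}. Concretely, if $[a]_E$ had infinite diameter with respect to $\biglor_n\Phi_n$, then, taking $C=[a]_E\subseteq Y$, Theorem~\ref{thm:mainA} would apply (as $E$ is bounded, $F_\sigma$ and orbital on types, and $Y$ is pseudo-closed and $E$-saturated) and would yield that $E\restr_Y$ is not smooth, contradicting $(1)$. This closes the cycle and shows that $(1),(2),(3),(5)$ are equivalent. The difficulty here is carried entirely by Theorem~\ref{thm:mainA}; the remaining moves are formal.

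It remains to incorporate $(4)$ and $(6)$. Both follow from Corollary~\ref{cor:diam}, which gives, for any single $E$-class, the equivalence of being pseudo-closed and having finite diameter with respect to (some, equivalently each) normal form. Applying this to the class $[a]_E$ yields $(5)\iff(6)$, and applying it to every class contained in $[a]_\equiv$ yields $(3)\iff(4)$. Combined with the cycle above, all six conditions are equivalent.
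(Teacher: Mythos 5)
Your proof is correct and follows essentially the same route as the paper's: the hard direction (smoothness implies finite diameter of $[a]_E$) is carried by Theorem~\ref{thm:mainA}, and the rest is assembled from Fact~\ref{fct:stdiam}, Fact~\ref{fct:tdsmt} and Corollary~\ref{cor:diam} (equivalently Theorem~\ref{thm:twN}), exactly as in the paper. The only difference is organisational: you close a cycle $(1)\Rightarrow(5)\Rightarrow(3)\Rightarrow(2)\Rightarrow(1)$ and attach $(4)$, $(6)$ afterwards, whereas the paper funnels everything through condition \eqref{it:apcl}; the mathematical content is the same.
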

\begin{proof}
We may assume without loss of generality that $X=[a]_{\equiv}$. Then $E$ is orbital.

All the conditions imply that $[a]_E$ is pseudo-closed (the first one does by Theorem~\ref{thm:mainA}, and the others are clearly stronger than \eqref{it:apcl}).

On the other hand, this condition implies that $[a]_E$ has finite diameter (by Theorem~\ref{thm:twN}), so all classes have the same, finite diameter (by Fact~\ref{fct:stdiam}), so of course they are pseudo-closed and $E$ is type-definable, and therefore $E\restr_Y$ is smooth (by Fact~\ref{fct:tdsmt}).
\end{proof}

\begin{rem}
Corollary~\ref{cor:maincont2} is, in a way, a strongest possible result. This is to say, there are examples of bounded, $F_\sigma$ and orbital equivalence relations whose Borel cardinality is exactly that of $\EZ$ (cf.\ \autocite[Example 3.3]{KPS13}), so we cannot replace the condition that $E\restr_Y$ is smooth with some weaker upper bound on Borel cardinality.
\end{rem}

For relations refining $\equiv_{\KP}$, we may be even more specific.
\begin{cor}\label{cor:refkp}
Assume that the language is countable.
Suppose $E$ is bounded, $F_\sigma$, countably supported and orbital on types. Suppose in addition that it refines $\equiv_{\KP}$. Then for any $a$ in the domain of $E$, we have that $E\restr_{[a]_{\equiv_{\KP}}}$ is trivial (i.e.\ total on $[a]_{\equiv_{\KP}}$) if and only if it is smooth. (In particular, if $E$ is smooth, then it is equal to a restriction of $\equiv_{\KP}$.)
\end{cor}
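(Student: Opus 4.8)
The plan is to read this off Corollary~\ref{cor:maincont2} applied to $Y:=[a]_{\equiv_{\KP}}$, together with the fact that $\equiv_{\KP}$ is the finest bounded type-definable equivalence relation. First I would check that $Y$ meets the hypotheses of Corollary~\ref{cor:maincont2}: since $\equiv_{\KP}$ is type-definable, $Y$ is pseudo-closed; since $\equiv_{\KP}$ refines $\equiv$, we have $Y\subseteq[a]_{\equiv}$ and of course $a\in Y$; and since $E$ refines $\equiv_{\KP}$, every $E$-class meeting $Y$ is contained in the single $\equiv_{\KP}$-class $Y$, so $Y$ is $E$-saturated. Thus $E\restr_Y$ is a legitimate object to which Corollary~\ref{cor:maincont2} applies.

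The forward implication is immediate: if $E\restr_Y$ is total (trivial) then $Y/E$ is a single class, and a one-class equivalence relation is trivially smooth. For the converse I would argue as follows. Assume $E\restr_Y$ is smooth. By Corollary~\ref{cor:maincont2} (the equivalence of its conditions (1) and (2)), the restriction $E\restr_{[a]_{\equiv}}$ is type-definable. It is therefore a bounded type-definable equivalence relation on the single complete type $[a]_{\equiv}$, and hence is refined by $\equiv_{\KP}$ -- this is exactly the input used in the ``in addition'' part of Corollary~\ref{cor:diam} (cf.\ \autocite[Fact 1.4]{CLPZ01}), so one may alternatively invoke that clause directly, using that all $E$-classes inside $[a]_{\equiv}$ are pseudo-closed by Corollary~\ref{cor:maincont2}. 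Either way we obtain ${\equiv_{\KP}}\restr_{[a]_{\equiv}}\subseteq E$, and combined with the hypothesis that $E$ refines $\equiv_{\KP}$ this gives $E\restr_{[a]_{\equiv}}={\equiv_{\KP}}\restr_{[a]_{\equiv}}$. Restricting once more to $Y=[a]_{\equiv_{\KP}}$: any two points of $Y$ are $\equiv_{\KP}$-equivalent and hence $E$-equivalent, so $E\restr_Y$ is total, as required.

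Finally, for the parenthetical ``in particular'': if $E$ itself is smooth, then for every $a$ in the domain the relation $E\restr_{[a]_{\equiv_{\KP}}}$ -- being the restriction of $E^M$ to the pseudo-closed, $E$-saturated set $[a]_{\equiv_{\KP}}$ -- is again smooth (a restriction of a smooth Borel equivalence relation to an invariant Borel subset is smooth, and the restriction is well-defined by Proposition~\ref{prop:cartdf}). By the equivalence just proved it is total for every such $a$, so ${\equiv_{\KP}}\restr_X\subseteq E$; together with the fact that $E$ refines $\equiv_{\KP}$ this yields $E={\equiv_{\KP}}\restr_X$.

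I do not expect a genuine obstacle here: the whole statement is an assembly of Corollary~\ref{cor:maincont2} and the minimality of $\equiv_{\KP}$ among bounded type-definable equivalence relations. The only points requiring care are the bookkeeping that $Y=[a]_{\equiv_{\KP}}$ is pseudo-closed and $E$-saturated (so that Corollary~\ref{cor:maincont2} applies and restrictions of smooth relations remain smooth), and the observation that type-definability of $E$ on the complete type $[a]_{\equiv}$ -- rather than on the whole sort -- already suffices to force $\equiv_{\KP}$ to refine it.
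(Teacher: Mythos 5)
Your proposal is correct and follows essentially the same route as the paper: both reduce to Corollary~\ref{cor:maincont2} with $Y=[a]_{\equiv_{\KP}}$ (checking it is pseudo-closed and $E$-saturated), conclude that smoothness forces $E\restr_{[a]_{\equiv}}$ to be type-definable, and then use the minimality of $\equiv_{\KP}$ among bounded type-definable relations (\autocite[Fact 1.4]{CLPZ01}) to get $E\restr_{[a]_{\equiv}}={\equiv_{\KP}}\restr_{[a]_{\equiv}}$ and hence triviality on $[a]_{\equiv_{\KP}}$. Your extra spelling-out of the parenthetical ``in particular'' clause is a harmless addition the paper leaves implicit.
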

\begin{proof}
The implication from left to right is trivial. To prove the converse, choose any $a$ in domain of $E$.
The set $[a]_{\equiv_{\KP}}$ is $E$-saturated (because $E$ refines $\equiv_{\KP}$), type-definable over $a$ and contained in $[a]_{\equiv}$, so we can assume without loss of generality that $E$ is defined on $[a]_{\equiv}$. Then we can apply Corollary~\ref{cor:maincont2}, which tells us that if $E\restr_{[a]_{\equiv_{\KP}}}$ is smooth, then $E$ is type-definable. But in this case $E$ is refined by $\equiv_{\KP}$ (by \autocite[Fact 1.4]{CLPZ01}), and therefore equal to $\equiv_{\KP}$ restricted to $[a]_{\equiv}$, and so $E\restr_{[a]_{\equiv_{\KP}}}$ is trivial.
\end{proof}

We infer an analogous result for invariant subgroups of bounded index of definable groups, whose uncountable counterpart (Corollary~\ref{cor:mainGu}) will be employed in the final section in the context of definable group extensions.

\begin{cor}\label{cor:mainG}
Assume the language is countable.
Suppose that $G$ is a definable group (and therefore countably, and even finitely supported) and $H\unlhd G$ is an invariant, normal subgroup of bounded index, which is $F_\sigma$ (equivalently, generated by a countable family of type-definable sets). Suppose in addition that $K\geq H$ is a pseudo-closed subgroup of $G$. Then $E_H\restr_{K}$ is smooth if and only if $H$ is type-definable.
\end{cor}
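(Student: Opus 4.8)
The plan is to reduce the statement to Corollary~\ref{cor:maincont2}. The obstruction to applying that corollary directly is that the coset relation $E_H$ does not refine type (as noted in the remark after Lemma~\ref{lem:compgrp}), so I would first transport $E_H$ to the orbit relation $E_{H,X}$ on the principal homogeneous space $\mathfrak X$ of Theorem~\ref{thm:phd}, which does refine type. Fix a base point $x_0\in\mathfrak X$ and the countable model $N=(M,G(M)\cdot x_0)$ of $(\mon,\mathfrak X,\cdot)$ from Proposition~\ref{prop:grres}; since the expanded language remains countable, the whole argument stays in the countable setting where smoothness is defined.

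Next I would check that $E_{H,X}$ meets the hypotheses of Corollary~\ref{cor:maincont2} and that $Y=K\cdot x_0$ is an admissible domain with $a=x_0$. The sort $\mathfrak X$ is $\emptyset$-definable and finitely supported, and $\Aut((\mon,\mathfrak X,\cdot))$ acts transitively on it, so $\mathfrak X$ is a single complete type and $[x_0]_\equiv=\mathfrak X$. Boundedness of $E_{H,X}$ follows from the bounded index of $H$; it is $F_\sigma$ since $H$ is, by Lemma~\ref{lem:compgrp} and Proposition~\ref{prop:grres}; and it is orbital, hence orbital on types, by Proposition~\ref{prop:grnrm}, where the normality of $H$ is used. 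As for $Y$: we have $x_0\in Y\subseteq\mathfrak X=[x_0]_\equiv$ because $e\in K$; the set $Y$ is pseudo-closed, being the image of the pseudo-closed $K$ under the $N$-definable bijection $g\mapsto g\cdot x_0$; and $Y$ is $E_{H,X}$-saturated, since $H\leq K$ with $H$ normal forces $K\cdot x_0$ to be a union of $H$-orbits.

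With these verifications, Corollary~\ref{cor:maincont2} gives that $E_{H,X}\restr_Y$ is smooth if and only if $E_{H,X}=E_{H,X}\restr_{[x_0]_\equiv}$ is type-definable. Finally I would transfer both sides back: by Proposition~\ref{prop:grres} the Borel cardinalities of $E_H\restr_K$ and $E_{H,X}\restr_{K\cdot x_0}$ agree, so one is smooth exactly when the other is; and, again by Proposition~\ref{prop:grres} together with Lemma~\ref{lem:compgrp}, type-definability of $E_{H,X}$ is equivalent to type-definability of $E_H$, and in turn to type-definability of $H$. Chaining these equivalences yields that $E_H\restr_K$ is smooth if and only if $H$ is type-definable.

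Since this is a corollary of the machinery already assembled, there is no hard computational step; the only real subtlety is the reduction itself, namely recognising that Corollary~\ref{cor:maincont2} cannot be applied to $E_H$ directly and that one must route everything through $E_{H,X}$, for which the homeomorphism of Proposition~\ref{prop:grres} simultaneously preserves smoothness, Borel cardinality and type-definability.
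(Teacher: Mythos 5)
Your proposal is correct and follows essentially the same route as the paper: transport $E_H$ to the orbit relation $E_{H,X}$ on the principal homogeneous space $\mathfrak X$, use normality of $H$ and Proposition~\ref{prop:grnrm} to get orbitality, apply Corollary~\ref{cor:maincont2} to $Y=K\cdot x_0$, and transfer smoothness and type-definability back via Proposition~\ref{prop:grres} and Lemma~\ref{lem:compgrp}. The only cosmetic difference is that the paper handles the easy direction directly via Fact~\ref{fct:tdsmt} and argues the hard direction by contraposition, whereas you run both directions through the biconditional of Corollary~\ref{cor:maincont2}; the content is the same.
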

\begin{proof}
If $H$ is type-definable, then by Lemma~\ref{lem:compgrp}, $E_H$ is a type-definable equivalence relation (on a type-definable set), and as such it is immediately smooth by Fact~\ref{fct:tdsmt}, and so is its restriction to $K$.

The proof in the other direction will proceed by contraposition: assume that $H$ is not type-definable. Recall Proposition~\ref{prop:grrel}: consider, once again, the sorted structure $(\mon,{\mathfrak X},\cdot)$.

By Proposition~\ref{prop:grres}, $H$ corresponds to a bounded $F_\sigma$ equivalence relation $E_{H,X}$ on $\mathfrak X$ (which is not type-definable, since $H$ is not), which is only defined on a single type, and -- owing to the assumption that $H$ is normal and Proposition~\ref{prop:grnrm} -- orbital.
Evidently $K\cdot x_0$ is $E_{H,X}$-saturated and pseudo-closed, so we can apply Corollary~\ref{cor:maincont2} to $E=E_{H,X}$ and $Y=K\cdot x_0$, deducing that $E_{H,X}\restr_{K\cdot x_0}$ is not smooth, and therefore (by Proposition~\ref{prop:grres}) neither is $E_H\restr_K$.
\end{proof}

\subsection{The uncountable case}
We intend to formulate the uncountable analogues of Theorem~\ref{thm:tool} and Theorem~\ref{thm:mainA}, but first we need to introduce some terminology.

\begin{dfn}
Suppose $L'\subseteq L$ is some sublanguage, $x'$ is a tuple of variables and $A$ is a set. Then by $L'_{x'}(A)$  we denote the Lindenbaum-Tarski algebra of (equivalence classes of) $L'$-formulas with free variables among $x'$ and parameters from $A$.
\end{dfn}

\begin{dfn}
Suppose we have an $F_\sigma$ equivalence relation $E$ with a normal form $\biglor_n\Phi_n(x,y)$. Suppose in addition that $L'\subseteq L$ is some sublanguage, $x'y'\subseteq xy$ is some smaller tuple of variables. Then we define the \emph{restriction of the normal form}, $\Phi_n\restr_{L'_{x'y'}(\emptyset)}$ as the set of $L'_{x'y'}(\emptyset)$-consequences of $\Phi_n(x,y)$, 
i.e.\ 
\[
\Phi_{n}\restr_{L'_{x'y'}(\emptyset)}=\{\varphi(x',y')\in L'_{x'y'}(\emptyset) \mid \Phi_n(x,y)\proves\varphi(x',y')\},
\]
and we define $E\restr_{L'_{x'y'}(\emptyset)}$ as the $F_\sigma$ relation given by
\[
a \mathrel{E\restr_{L'_{x'y'}(\emptyset)}} b\iff \mon\models \biglor_n\Phi_n\restr_{L'_{x'y'}(\emptyset)}(a,b).
\]
\end{dfn}

\begin{rem}
For arbitrary $L',x'y'$ and $E$, $\biglor_n \Phi_n\restr_{L'_{x'y'}(\emptyset)}$ might not be a normal form 
(it need not satisfy the triangle inequality, but see the next proposition), but if it is, $E\restr_{L'_{x'y'}(\emptyset)}$ is an equivalence relation coarser than $E$ (and with a larger domain) and the metric $d'$ associated with the restricted normal form satisfies $d'\leq d$.
\end{rem}

\begin{prop}
\label{prop:nfrst}
Given $L',x'y'$, we may always extend $L'$ and $x'y'$ (without increasing their cardinality by more than $\lvert L'\rvert+\lvert x'y'\rvert+\aleph_0$) to $L'',x''y''$ in such a way that $\biglor_n\Phi_n\restr_{L''_{x''y''}(\emptyset)}$ is a normal form (and consequently, $E\restr_{L''_{x''y''}(\emptyset)}$ is an equivalence relation).
\end{prop}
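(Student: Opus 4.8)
The plan is to reduce the whole statement to the triangle inequality and then to force that inequality by a Löwenheim--Skolem style closure. First I would dispose of the other two metric axioms cheaply. Replacing $x'y'$ by a parallel pair of tuples (for each coordinate of $x'$ adjoin its counterpart in $y'$ and vice versa, at most doubling $\lvert x'y'\rvert$) makes the restriction symmetric, since $d$ is symmetric and hence $\Phi_n(x,y)$ and $\Phi_n(y,x)$ have the same consequences up to swapping $x'\leftrightarrow y'$; coincidence is automatic because equality lies in every language and $\Phi_0\vdash x'=y'$, so $\Phi_0\restr$ already entails $x''=y''$ on its domain. Thus the entire content is the triangle inequality for the induced distance $d''$. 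The basic tool is the lifting characterisation: for any sublanguage $L^\ast$ and parallel subtuple $x^\ast y^\ast$, one has $(a,b)\models \Phi_n\restr_{L^\ast_{x^\ast y^\ast}(\emptyset)}$ iff there is a genuine pair $(a^+,b^+)\models\Phi_n$ whose reduct has the same $L^\ast$-type as $(a,b)$ (by compactness applied to $\Phi_n(x,y)$ together with the $L^\ast_{x^\ast y^\ast}$-type of $(a,b)$). Consequently the restricted distance is a quotient, $d''(a,b)=\min\{d(a^+,b^+): \tp_{L''}(a^+\restr_{x''},b^+\restr_{y''})=\tp_{L''}(a,b)\}$, the minimum being attained by compactness.

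Now I would fix actual $a,b,c$ with $d''(a,b)\le m$ and $d''(b,c)\le n$ and aim at $d''(a,c)\le m+n$, i.e.\ $\chi(a,c)$ for every $L''_{x''y''}$-formula $\chi$ with $\Phi_{m+n}\vdash\chi$. Using the triangle inequality for $d$ in the form $\Phi_m(x,z)\wedge\Phi_n(z,y)\vdash\Phi_{m+n}(x,y)$ and compactness, each such $\chi$ is already implied by $\exists z\,(\phi(x,z)\wedge\psi(z,y))$ for single formulas $\phi,\psi$ with $\Phi_m\vdash\phi$, $\Phi_n\vdash\psi$; crucially $\phi,\psi$ mention only finitely many coordinates of $x,y,z$. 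Lifting $d''(a,b)$ and $d''(b,c)$ yields pairs with $\phi(a_1,b_1)$ and $\psi(b_2,c_2)$, but the two middles $b_1,b_2$ agree only on the reduct $x''y''$; the whole difficulty is to amalgamate them into a single middle so that $\exists z\,(\phi\wedge\psi)$ is witnessed around reducts matching $(a,c)$.

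The construction that resolves this is an increasing $\omega$-chain $(L_k,x_ky_k)$, starting from the symmetrised $(L',x'y')$, with $L''=\bigcup_k L_k$ and $x''y''=\bigcup_k x_ky_k$. At stage $k$, for all $m,m'$ and every $L_k$-formula $\chi(x_k,y_k)$ that is a $\Phi_{m+m'}$-consequence, I select witnessing $\phi,\psi$ as above and adjoin to $L_{k+1}$ and $x_{k+1}y_{k+1}$ the finitely many extra coordinates of the middle $z$ occurring in $\phi,\psi$, the finitely many $L$-symbols occurring in $\phi,\psi$, and a Skolem function symbol (interpreted in $\mon$) selecting such a middle together with the corresponding neighbours from the left/right reduct data. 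There are at most $\lvert L_k\rvert+\lvert x_ky_k\rvert+\aleph_0$ triples $(\chi,\phi,\psi)$, so each stage adds at most that many symbols and variables; since $\lambda+\lambda=\lambda$ for $\lambda=\lvert L'\rvert+\lvert x'y'\rvert+\aleph_0$, the bound $\lvert L''\rvert+\lvert x''y''\rvert\le\lambda$ is maintained in the limit.

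Finally I would verify closure: at the limit any offending $\chi$ already lies in some $L_N$, and the data added at stage $N$ lets one read a common middle off the reduct type of $b$ and genuine left/right neighbours off those of $a$ and $c$, producing a realisation of $\exists z\,(\phi\wedge\psi)$ with reduct matching $(a,c)$, whence $\chi(a,c)$ and so $d''(a,c)\le m+m'$. I expect this amalgamation-by-Skolemisation to be the main obstacle: it must be arranged so as to add only boundedly many symbols, which is exactly why one exploits that each first-order formula has finite coordinate support and iterates the closure $\omega$ times, rather than naively adjoining a symbol per reduct type (which would inflate the cardinality to $2^\lambda$ and break the quantitative claim).
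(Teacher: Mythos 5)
Your skeleton matches the paper's proof: symmetrise $x'y'$, use compactness on $\Phi_n(x,z)\land\Phi_m(z,y)\vdash\Phi_{n+m}(x,y)\vdash\chi$ to extract finite witnesses $\phi,\psi$, adjoin their (finitely many) symbols and variables, iterate $\omega$ times, and check the cardinal arithmetic. But your verification of the triangle inequality at the limit goes wrong, and precisely at the step you yourself flag as ``the main obstacle''. The amalgamation-of-middles problem is spurious. Once the symbols and variables of $\phi$ and $\psi$ have been added to $L''$ and $x''y''$, the formulas $\phi$ and $\psi$ \emph{are} elements of $\Phi_m\restr_{L''_{x''y''}(\emptyset)}$ and $\Phi_n\restr_{L''_{x''y''}(\emptyset)}$ respectively (they are $L''_{x''y''}$-consequences of the full types). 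So the hypotheses $d''(a,b)\leq m$ and $d''(b,c)\leq n$ give $\phi(a,b)$ and $\psi(b,c)$ \emph{directly for the actual tuples} $a,b,c$; the tuple $b$ itself is the common middle, and $\chi(a,c)$ follows from $\models\phi(x,z)\land\psi(z,y)\limplies\chi(x,y)$ (the unused coordinates are universally quantified away since $\phi,\psi,\chi$ only mention variables in $x''y''z''$). There is nothing to lift and nothing to amalgamate.

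Your actual resolution --- adjoining Skolem function symbols interpreted in $\mon$ --- is not available here: by the definition of the restriction of a normal form, $L''$ must be a \emph{sublanguage of $L$} and $x''y''$ a subtuple of $xy$; the proposition is used downstream (Corollary~\ref{cor:toolu}, Theorem~\ref{thm:mainAu}) exactly in that form, where expanding the structure by new symbols would change the meaning of invariance and the relevant type spaces. Moreover, even granting the expansion, it is not clear that a function of the reduct of $b$ alone can produce a single full middle simultaneously compatible with both liftings --- type amalgamation over a reduct is not available in general. So as written the key step fails; the fix is simply to drop the lifting characterisation and the Skolemisation and argue directly as above, which is what the paper does with the data you have already put into $L''$ and $x''y''$.
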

\begin{proof}
First, we may assume that $x',y'$ are symmetric (so that each $\Phi_n\restr_{L'_{x'y'}(\emptyset)}$ is symmetric). Now, for any $n<m$ and any formula $\varphi(x,y)\in \Phi_m(x,y)\restr_{L'_{x'y'}(\emptyset)}$ there are some formulas $\varphi_1(x,y),\varphi_2(x,y)$ in $\Phi_n(x,y)$ and $\Phi_{m-n}(x,y)$, respectively, which witness triangle inequality, that is $\models \varphi_1(x,z)\land\varphi_2(z,y)\limplies \varphi(x,y)$, and we can add to $L'$ all the symbols and to $x',y'$ all the variables from $\varphi_1,\varphi_2$ (preserving symmetry).

For each pair $n<m$ and formula $\varphi\in \Phi_m(x,y)\restr_{L'_{x'y'}(\emptyset)}$ we add these finitely many symbols and variables (adding no more than $\lvert L'\rvert+\lvert x'y'\rvert+\aleph_0$ of them at this step), and we repeat this procedure recursively countably many times (adding no more than $\aleph_0\cdot (\lvert L'\rvert+\lvert x'y'\rvert+\aleph_0)=\lvert L'\rvert+\lvert x'y'\rvert+\aleph_0$ in total). In the end, we have witnesses for all formulas.
\end{proof}

The following result is a theorem from \autocite{KMS14}, with slightly extended conclusion (which is a part of the proof there).
\begin{thm}[{\autocite[Theorem 2.5]{KMS14}}]
\label{thm:dtmtoolu}
Suppose that $X$ is a regular topological space, $\langle R_n\mid n\in \N\rangle$ is a sequence of $F_\sigma$ subsets of $X^2$, $\Sigma$ is a group of homeomorphisms of $X$, and $\mathcal O\subseteq X$ is an orbit of $\Sigma$ with the property that for all $n\in \N$ and open sets $U\subseteq X$ intersecting $\mathcal O$, there are distinct $x,y\in\mathcal O\cap U$ with $\mathcal O\cap (R_n)_x\cap (R_n)_y=\emptyset$. If $X$ is strong Choquet over $\mathcal O$, then there is a function $\tilde\phi\colon 2^{<\omega}\to \mathcal P(X)$ such that for any $\eta\in 2^\omega$ and any $n\in \omega$:
\begin{itemize}
\item
$\tilde\phi(\eta\restr n)$ is a nonempty open set,
\item
$\overline{\tilde\phi(\eta\restr{(n+1)})}\subseteq \tilde\phi(\eta\restr{n})$
\end{itemize}
Moreover, $\phi(\eta)=\bigcap_n \tilde\phi(\eta\restr n)=\bigcap_n \overline{\tilde\phi(\eta\restr n)}$ is a nonempty closed $G_\delta$ set such that for any $\eta,\eta'\in 2^\omega$ and $n\in\omega$:
\begin{itemize}
\item
if $\eta \EZ \eta'$, then there is some $\sigma\in\Sigma$ such that $\sigma\cdot \phi(\eta)=\phi(\eta')$,
\item
if $\eta(n)\neq \eta'(n)$, then $\phi(\eta)\times \phi(\eta')\cap R_n=\emptyset$, and if $\eta,\eta'$ are not $\EZ$-related, then $\phi(\eta)\times \phi(\eta')\cap \bigcup R_n=\emptyset$.
\end{itemize}
\end{thm}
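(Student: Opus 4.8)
The plan is to build, by recursion on $n=|s|$, a Cantor scheme of nonempty open sets $\tilde\phi(s)$ indexed by $s\in 2^{<\omega}$, carrying along three auxiliary structures that will secure the three families of conclusions: a distinguished ``centre'' $c_s\in\mathcal O\cap\tilde\phi(s)$, a run of the strong Choquet game played along each branch, and, at each branching node, a homeomorphism $\sigma_s\in\Sigma$ matching the two child subtrees. At each level I would first use regularity to pass from a candidate open set to a smaller one whose closure sits inside the parent; this immediately yields the nesting $\overline{\tilde\phi(\eta\restr(n+1))}\subseteq\tilde\phi(\eta\restr n)$ and pairwise disjointness of sibling closures, and the resulting sandwich $\bigcap_n\tilde\phi(\eta\restr n)=\bigcap_n\overline{\tilde\phi(\eta\restr n)}$ makes each $\phi(\eta)$ a closed $G_\delta$.

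Nonemptiness of $\phi(\eta)$ is exactly where strong Choquet over $\mathcal O$ enters. Fixing a winning strategy for the second player in the game localised so that the first player's points lie in $\mathcal O$, I would interleave it with the recursion: the centres $c_s\in\mathcal O$ together with the current candidate opens play the role of the first player's moves, and the strategy dictates the shrinking that defines $\tilde\phi(s)$. Since the strategy depends only on the finite history, and the history down to any node is determined by the unique path reaching it, every branch $\eta$ sees a legitimate run of the game, so $\bigcap_n\tilde\phi(\eta\restr n)\neq\emptyset$.

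The $R_n$-separation I would arrange at the $n$-th splitting. Applying the hypothesis to $U=\tilde\phi(s)$ (which meets $\mathcal O$ since $c_s\in\mathcal O\cap\tilde\phi(s)$) and to the relation $R_n$, I pick distinct $x,y\in\mathcal O\cap U$ with $\mathcal O\cap(R_n)_x\cap(R_n)_y=\emptyset$ and set $c_{s^\frown 0}=x$, $c_{s^\frown 1}=y$. Because $R_n$ is merely $F_\sigma$, I cannot separate the two subtrees in one step; instead I write $R_n=\bigcup_k F_{n,k}$ with $F_{n,k}$ closed and, using regularity together with the no-common-neighbour property of $x,y$, knock out the pieces $F_{n,k}$ one at a time over the deeper levels, via a bookkeeping visiting every pair $(n,k)$. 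This yields $\phi(\eta)\times\phi(\eta')\cap R_n=\emptyset$ whenever $\eta,\eta'$ split at coordinate $n$. The second separation clause follows from the first because the relations $R_n$ here are increasing (they arise from a normal form): if $\eta,\eta'$ are not $\EZ$-related, then for every $n$ there is $m\geq n$ with $\eta(m)\neq\eta'(m)$, and $R_n\subseteq R_m$ turns $R_m$-avoidance at coordinate $m$ into $R_n$-avoidance.

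Finally, the $\EZ$-homogeneity comes from building the scheme $\Sigma$-equivariantly. Since $x=c_{s^\frown 0}$ and $y=c_{s^\frown 1}$ both lie in the single orbit $\mathcal O$, I choose $\sigma_s\in\Sigma$ with $\sigma_s\cdot x=y$ and thereafter construct the subtree below $s^\frown 1$ as the $\sigma_s$-image of the subtree below $s^\frown 0$, so that $\sigma_s\cdot\tilde\phi(s^\frown 0\frown t)=\tilde\phi(s^\frown 1\frown t)$ for all $t$. For $\eta\,\EZ\,\eta'$ agreeing beyond some level $N$, composing the finitely many $\sigma_s$ attached to the coordinates where they differ produces $\sigma\in\Sigma$ with $\sigma\cdot\phi(\eta)=\phi(\eta')$. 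The hard part will be reconciling this rigid equivariance with the previous two requirements: once a child subtree is declared to be the $\sigma_s$-image of its sibling, every subsequent Choquet-shrinking and every $F_\sigma$-separation step on one side must be mirrored on the other, and the elements $\sigma_s$ along a branch must be chosen coherently so that their finite compositions are well defined and remain in $\Sigma$. Carrying out the strong Choquet strategy, the relation-avoidance bookkeeping, and the equivariant mirroring simultaneously inside a single recursion is the technical core, and is precisely the point at which the construction of \autocite[Theorem 2.5]{KMS14} is reproduced.
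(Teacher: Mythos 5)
Your outline assembles exactly the ingredients of the proof of \autocite[Theorem 2.5]{KMS14} --- regularity for the closure-nesting, the strong Choquet game for nonemptiness of $\phi(\eta)$, the orbit hypothesis at the $n$-th splitting plus a bookkeeping over the closed pieces of each $F_\sigma$ relation for the $R_n$-separation, elements of $\Sigma$ for the $\EZ$-homogeneity, and monotonicity of the $R_n$ for the final clause (note that the statement as printed omits the word ``increasing'', which is part of the hypothesis in \autocite{KMS14} and holds in every application here; without it the last clause would not follow from the preceding one). Since the paper's own proof is a one-line citation of \autocite{KMS14}, this is the same approach in substance. The one organizational difference is worth dwelling on, because it is precisely what resolves the tension you single out as the ``technical core'' and then defer. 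You build a full binary tree of open sets in which the subtree below $s^\frown 1$ is declared to be the $\sigma_s$-image of the subtree below $s^\frown 0$; as you yourself observe, the mirrored branches then no longer follow the second player's Choquet strategy, so nonemptiness of $\phi(\eta)$ along those branches is not actually secured by your set-up as described --- left in this form, that is a gap, not a bookkeeping matter. The construction in \autocite{KMS14} avoids it by building a \emph{single} decreasing chain of open sets $X_0\supseteq X_1\supseteq\cdots$ (one run of the game, in which the open set offered at stage $n$ is shrunk so as to lie inside the finitely many pullbacks under the $\gamma_s^{-1}$, $\lvert s\rvert=n$, of all conditions required at that level) together with a coherent family $\gamma_s\in\Sigma$, and then \emph{defining} $\tilde\phi(s)=\gamma_s\cdot X_{\lvert s\rvert}$ --- this is exactly the identification the paper's proof records. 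Equivariance is thereby built into the definition rather than imposed as a constraint, only one branch ever consults the strategy, and your reconciliation problem dissolves. With that reorganization your sketch becomes the argument of \autocite{KMS14}; without it, the step you postpone is the whole difficulty.
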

\begin{proof}
As in \autocite{KMS14}: what we call $\tilde\phi(\sigma)$ here is $\gamma_{\sigma}\cdot X_{\lvert\sigma\rvert}$ in the proof there.
\end{proof}

We introduce the notion of sub-Vietoris topology which will be crucial in the application in the last section of the paper.

\begin{dfn}
Suppose $X$ is a topological space. Then by the {\em sub-Vietoris topology} we mean the topology on $\mathcal P(X)$ (i.e.\ on the family of all subsets of $X$), or on any subfamily of $\mathcal P(X)$, generated by subbasis of open sets of the form $\{A\subseteq X\mid A\cap K=\emptyset\}$ for $K\subseteq X$ closed.
\end{dfn}

As the name suggests, the sub-Vietoris topology is weaker than Vietoris topology (it differs in that the sets of the form $\{A\subseteq X\mid A\cap U\neq \emptyset \}$ for open $U$ are not included in the subbasis), and it is not, in general, Hausdorff, even when restricted to compact sets. However, we can find some spaces on which it is actually Hausdorff, e.g.\ ones as in the next simple fact.

\begin{prop}
\label{prop:taut2}
Suppose $X$ is a normal topological space (e.g.\ a compact Hausdorff space) and $\mathcal A$ is any family of pairwise disjoint, nonempty closed subsets of $X$. Then $\mathcal A$ is Hausdorff with sub-Vietoris topology.$\qed$
\end{prop}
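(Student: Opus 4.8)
The plan is to take two distinct members $A, B \in \mathcal{A}$ and produce subbasic sub-Vietoris open sets separating them. Since the subbasis consists of sets of the form $U_K = \{ C \mid C \cap K = \emptyset\}$ for $K \subseteq X$ closed, I need to find closed sets $K_A, K_B \subseteq X$ such that $A \in U_{K_A}$, $B \in U_{K_B}$, and $U_{K_A} \cap U_{K_B} = \emptyset$; the last condition means every $C$ in the family meets at least one of $K_A, K_B$, but in fact it suffices to separate $A$ and $B$ specifically, so I only need $A \cap K_A = \emptyset$, $B \cap K_B = \emptyset$, and no set can avoid both $K_A$ and $K_B$ — equivalently $K_A \cup K_B = X$, or at least that $K_A \cup K_B$ meets every relevant set. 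The cleanest route is to arrange $K_A \cup K_B = X$ outright.

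First I would use that $A$ and $B$ are disjoint, nonempty, closed subsets of the normal space $X$. By normality there exist disjoint open sets $V_A \supseteq A$ and $V_B \supseteq B$. Set $K_A = X \setminus V_A$ and $K_B = X \setminus V_B$; these are closed. Then $A \cap K_A = A \setminus V_A = \emptyset$ since $A \subseteq V_A$, so $A \in U_{K_A}$; symmetrically $B \in U_{K_B}$. Moreover $K_A \cup K_B = (X \setminus V_A) \cup (X \setminus V_B) = X \setminus (V_A \cap V_B) = X \setminus \emptyset = X$, using $V_A \cap V_B = \emptyset$.

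It remains to check $U_{K_A} \cap U_{K_B} = \emptyset$. If some $C$ lay in both, then $C \cap K_A = \emptyset$ and $C \cap K_B = \emptyset$, whence $C \cap (K_A \cup K_B) = \emptyset$, i.e.\ $C \cap X = \emptyset$, forcing $C = \emptyset$. But every member of $\mathcal{A}$ is nonempty, so no such $C$ exists in $\mathcal{A}$, and $U_{K_A} \cap U_{K_B} \cap \mathcal{A} = \emptyset$. Thus $U_{K_A}$ and $U_{K_B}$ are disjoint sub-Vietoris open neighbourhoods of $A$ and $B$ within $\mathcal{A}$, proving Hausdorffness.

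I do not anticipate a genuine obstacle here: the entire content is the interplay between normality (to get the separating opens) and the elementary set-theoretic identity $K_A \cup K_B = X$ combined with nonemptiness of the members of $\mathcal{A}$ (to force disjointness of the subbasic neighbourhoods). The only point requiring a moment's care is recognising that the subbasis involves \emph{avoidance} of closed sets rather than \emph{intersection} with open sets, so that complementing the normality-given open neighbourhoods is exactly the right move; the pairwise-disjointness hypothesis on $\mathcal{A}$ is in fact not needed for separating two fixed members and is only relevant if one wants the conclusion phrased globally.
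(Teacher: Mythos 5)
Your proof is correct and is exactly the standard normality argument the paper leaves to the reader (the $\qed$ in the statement signals that no proof is given): separate the two disjoint closed sets by disjoint opens, complement them to get closed sets whose associated subbasic sub-Vietoris neighbourhoods cover $X$ and hence cannot both be avoided by a nonempty set. One small caveat about your closing remark: the pairwise-disjointness hypothesis \emph{is} used, since it is what guarantees that two distinct members $A\neq B$ of $\mathcal A$ are disjoint (if $A\subsetneq B$ were allowed, every subbasic neighbourhood of $B$ would contain $A$ and the space would not even be $T_1$).
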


\begin{cor}[Based on {\autocite[Corollary 2.6]{KMS14}}]
\label{cor:toolu}
Let $T$ be any first order theory with language $L$, $M$ a small model, $E$ a bounded, $F_\sigma$ equivalence relation on a type-definable subset $X$ of a product of sorts compatible with a tuple of variables $x$. Suppose $E$ has a normal form $\biglor_n \Phi_n(x,y)$. Let $Y$ be an $E$-saturated subset of $X$. Finally, suppose we have:
\begin{enumerate}
\item
some $p\in Y_M$,
\item
a countable $L'\subseteq L$, a countable $M'\preceq M\restr_{L'}$ and a countable tuple of variables $x'\subseteq x$,
\item
a group $\Sigma$ of automorphisms preserving $M$, $M'$ and all $E$-classes in $Y$ setwise.
\end{enumerate}
Such that:
\begin{enumerate}
\item
the restriction $\biglor_n\Phi_n\restr_{L'_{x'y'}(\emptyset)}(x,y)$ is a normal form (i.e.\ satisfies the triangle inequality),
\item
the topology induced on $Y_M$ by $L'$-formulas with free variables $x'$ and parameters from $M'$ is strong Choquet over $\Sigma\cdot p$
\item
For every open set $U\ni X$ in the induced topology and for all $N\in \N$, there are some $\sigma\in \Sigma$ such that $\sigma(p)\in U$ and, letting $p'=p\restr_{L'_{x'}(M')}$, we have $N<d'_{M'}(\sigma(p'),p')$.
\end{enumerate}

Then there are maps $\tilde\phi, \phi$ into $\mathcal P(Y_M)$ as in Theorem~\ref{thm:dtmtoolu} with $R_n=\{(p,q)\in (Y_M)^2\mid d_M(p,q)\leq n\}$.

(Since $R_n$ contain the diagonal, it follows that $\phi$ maps distinct points to disjoint sets, and if $\eta_1,\eta_2$ are $\EZ$-inequivalent, then $\phi(\eta_1)\times \phi(\eta_2)\cap E^M=\emptyset$.)

Furthermore, if $Y$ is pseudo-closed, then $\phi$ is a homeomorphism onto a compact subspace of $\mathcal P(Y_M)$ with sub-Vietoris topology.
\end{cor}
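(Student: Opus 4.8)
The plan is to derive everything from the abstract Theorem~\ref{thm:dtmtoolu}, applied to the set $Y_M$ endowed with the induced topology coming from $L'$-formulas in the variables $x'$ with parameters from $M'$. First I would verify the structural hypotheses. This induced topology has a basis of clopen sets (preimages of clopen subsets of the Stone space of $L'_{x'}(M')$-types under the natural restriction map), so it is zero-dimensional and in particular regular. Each $\sigma\in\Sigma$ is an $L$-automorphism of $\mon$ fixing $M$ and $M'$ setwise, hence it respects $L'$-formulas over $M'$ and preserves $Y$ (it fixes every $E$-class in $Y$); thus $\Sigma$ acts by homeomorphisms of $Y_M$ and $\mathcal O:=\Sigma\cdot p$ is one of its orbits. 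Strong Choquetness over $\mathcal O$ is precisely assumption~(2).

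For the family $R_n$ I would avoid the sets $\{d_M\le n\}$ themselves, which are defined via full types over $M$ and need not be $F_\sigma$ in the coarser induced topology, and instead take $R_n:=\{(q_1,q_2)\in (Y_M)^2\mid d'_{M'}(q_1\restr_{L'_{x'}(M')},q_2\restr_{L'_{x'}(M')})\le n\}$, where $d'$ is the metric of the restricted normal form (which is a normal form by assumption~(1)). Since $\Phi_n\restr_{L'_{x'y'}(\emptyset)}$ is an $L'$-type and the restriction map $S_{x'y'}^{L'}(M')\to S_{x'}^{L'}(M')^2$ is continuous and closed, each $R_n$ is closed, hence $F_\sigma$, in the induced topology. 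Because $d'\le d$ gives $d'_{M'}\le d_M$ after restriction, we have $\{d_M\le n\}\subseteq R_n$ and $E^M=\{d_M<\infty\}\subseteq\bigcup_nR_n$, while each $R_n$ contains the diagonal; so all the conclusions stated in terms of $\{d_M\le n\}$ and $E^M$ will follow from the corresponding ones for these $R_n$.

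The crux of the proof, and the step I expect to be the main obstacle, is the separation hypothesis of Theorem~\ref{thm:dtmtoolu}: for each $n$ and each open $U$ meeting $\mathcal O$ there must be distinct $x,y\in\mathcal O\cap U$ with $\mathcal O\cap(R_n)_x\cap(R_n)_y=\emptyset$. Given such a $U$, I would pick $\rho\in\Sigma$ with $\rho(p)\in U$, apply assumption~(3) to the open set $\rho^{-1}[U]\ni p$ and to $N=2n$ to get $\sigma\in\Sigma$ with $\sigma(p)\in\rho^{-1}[U]$ and $d'_{M'}(\sigma(p'),p')>2n$, and set $x:=\rho(p)$, $y:=\rho\sigma(p)$, two distinct points of $\mathcal O\cap U$. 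If some $z\in\mathcal O$ lay in $(R_n)_x\cap(R_n)_y$, then applying $\rho^{-1}$ (which fixes $M,M'$ and so preserves $d'_{M'}$) would yield a type $r$ with $d'_{M'}(r,p')\le n$ and $d'_{M'}(r,\sigma(p'))\le n$. The delicate point is that $d'_{M'}$ need not satisfy the triangle inequality on types even though $d'$ does on elements; I would bridge this by choosing realisations $b_1,b_2\models r$ witnessing the two distances and moving one to the other by an $L'$-automorphism of $\mon\restr_{L'}$ over $M'$ --- legitimate since that reduct is again saturated and strongly homogeneous --- thereby exhibiting realisations of $p'$ and $\sigma(p')$ at $d'$-distance $\le 2n$, so that $d'_{M'}(p',\sigma(p'))\le 2n$, contradicting the choice of $\sigma$.

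Theorem~\ref{thm:dtmtoolu} then produces $\tilde\phi$ and $\phi$ with the asserted nesting and disjointness. Distinct $\eta$ give disjoint $\phi$-images, since they are separated by some $R_n$ and every $R_n$ contains the diagonal, and $\EZ$-inequivalent $\eta_1,\eta_2$ give $\phi(\eta_1)\times\phi(\eta_2)\cap\bigcup_nR_n=\emptyset$, hence disjointness from $E^M\subseteq\bigcup_nR_n$. For the final clause, when $Y$ is pseudo-closed it is $M$-invariant by Corollary~\ref{cor:satinv} and closed over $M$ by Corollary~\ref{cor:bcomp}, so $Y_M$ is compact Hausdorff in its full topology; the sets $\phi(\eta)$ are closed in the induced topology and therefore in the full one, nonempty and pairwise disjoint, so Proposition~\ref{prop:taut2} shows that $\{\phi(\eta)\}$ is Hausdorff in the sub-Vietoris topology. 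It remains to check that $\phi$ is continuous into it: for closed (hence compact) $K\subseteq Y_M$, if $\phi(\eta)\cap K=\emptyset$ then the decreasing sequence of compact sets $\overline{\tilde\phi(\eta\restr n)}$ has empty intersection with $K$, so by compactness already $\overline{\tilde\phi(\eta\restr m)}\cap K=\emptyset$ for some $m$; as this depends only on $\eta\restr m$, the set $\{\eta\mid\phi(\eta)\cap K=\emptyset\}$ is open. A continuous injection from the compact space $2^\N$ into the Hausdorff space just described is a homeomorphism onto its compact image, finishing the argument.
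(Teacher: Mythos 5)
Your proposal is correct and follows essentially the same route as the paper: the first part is exactly the argument of \autocite[Corollary 2.6]{KMS14} (with the restricted normal form supplying a metric $d'\le d$ so that the $d'_{M'}$-balls contain the $d_M$-balls and the negative conclusions of Theorem~\ref{thm:dtmtoolu} transfer down to $R_n=\{d_M\le n\}$), and your treatment of the ``furthermore'' clause --- disjoint nonempty closed sets, compactness of $Y_M$, Proposition~\ref{prop:taut2}, and continuity checked on subbasic sub-Vietoris sets via compactness of the $\overline{\tilde\phi(\eta\restr n)}$ --- is precisely the paper's argument. The only difference is that you spell out details the paper delegates to \autocite{KMS14}.
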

\begin{proof}
The first part is the same as in \autocite{KMS14}; note that thanks to the added condition about the restriction of $\biglor_n\Phi_n$ remaining a normal form, the restricted normal form gives us a metric $d'$ such that $d'\leq d$ (where $d$ is the metric obtained from the original normal form).

The ``furthermore'' part follows from the fact that $\phi$ maps distinct points onto disjoint, closed, nonempty subsets of $Y_M$, so in particular, the range $\rng(\phi)$ is a family of disjoint, closed, nonempty subsets of $Y_M$. 
Since $Y_M$ is compact  (by Corollary~\ref{cor:satinv} and Corollary~\ref{cor:bcomp}), $\rng(\phi)$ is Hausdorff with sub-Vietoris topology (by Proposition~\ref{prop:taut2}). We also see that $\phi$ is injective, so it is a bijection onto $\rng(\phi)$, and since $2^{\N}$ is compact, it is enough to show that $\phi$ is continuous.

To see that, consider a subbasic open set $U=\{F\mid F\cap K=\emptyset\}$, and notice that by compactness, $\phi(\eta)\in U$ if and only if for some $n$ we have $\overline{\tilde\phi(\eta\restr n)}\cap K=\emptyset$, which is clearly an open condition about $\eta$.
\end{proof}

\begin{thm}
\label{thm:mainAu}
Let $T$ be a complete first-order theory, $E$ an orbital on types, bounded, $F_\sigma$ equivalence relation on a $\lambda$-supported type-definable set $X$.

Take some $E$-saturated and pseudo-closed $Y\subseteq X$, and assume that there is an element $a\in Y$ whose $E$-class has infinite diameter with respect to some normal form of $E$, and choose a group $\Gamma$ witnessing that $E\restr_{[a]_\equiv}$ is orbital.

Then there is a model $M$ of size $\lvert T\rvert+\lambda$ and a function $\phi\colon 2^{\N}\to \mathcal P(Y_M\cap [a]_\equiv)$ as in the conclusion of Corollary~\ref{cor:toolu}, i.e.\ $\phi$ is a homeomorphic embedding (into $\mathcal P(Y_M\cap [a]_\equiv)$ with sub-Vietoris topology) such that for any $\eta,\eta'\in 2^{\N}$:
\begin{enumerate}
\item
$\phi(\eta)$ is a nonempty closed $G_\delta$,
\item
if $\eta\EZ\eta'$, then there exists some $\gamma\in\Gamma$ fixing $M$ setwise such that $\gamma\cdot \phi(\eta)=\phi(\eta')$ (so in particular, their saturations $[\phi(\eta)]_{E^M},[\phi(\eta')]_{E^M}$ are equal),
\item
if $\eta\neq \eta'$, then $\phi(\eta),\phi(\eta')$ are disjoint,
\item
if $\eta,\eta'$ are not $\EZ$-related, then $\phi(\eta)\times\phi(\eta')\cap E^M=\emptyset$.
\end{enumerate}
\end{thm}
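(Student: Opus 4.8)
The plan is to deduce everything from Corollary~\ref{cor:toolu}, so the whole proof reduces to manufacturing the data $M, M', L', x', \Sigma, p$ required there and verifying its three hypotheses. First I would pass to the case $X=[a]_\equiv$: since $E$ is orbital on types it refines $\equiv$, so $[a]_\equiv$ is $E$-saturated, and replacing $Y$ by $Y\cap[a]_\equiv$ (still $E$-saturated, and pseudo-closed as an intersection of pseudo-closed sets) makes $X$ a single complete type on which $E$ is genuinely orbital, witnessed by $\Gamma$, with $C:=[a]_E=\Gamma\cdot a$ of infinite diameter. Using Proposition~\ref{prop:nfrm} together with Corollary~\ref{cor:diam} I would then fix once and for all a normal form $\biglor_n\Phi_n$ with induced metric $d\le d_L$ for which $C$ still has infinite diameter. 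As $d$ is $\Aut(\mon)$-invariant and $\Gamma$ acts transitively on $C$, the set $\{d(a,\gamma a)\mid\gamma\in\Gamma\}$ is then unbounded. Note that after this reduction $Y_M\subseteq[a]_\equiv$, so the target space is indeed $\mathcal P(Y_M\cap[a]_\equiv)=\mathcal P(Y_M)$.

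Next I would produce a countable symmetric sublanguage $L'$ and countable symmetric tuple $x'y'$ so that the restricted metric $d'$ still separates $C$ arbitrarily far. For each $N$ the set $\{d>N\}$ is the negation of the partial type $\Phi_N$, hence open in $S_{P^2}(\emptyset)$, and being realised in $C^2$ it contains a basic clopen neighbourhood cut out by a single $\emptyset$-formula $\chi_N$ with $\chi_N\proves\neg\Phi_N$. Then $\Phi_N\proves\neg\chi_N$, so $\neg\chi_N$ becomes an $L'_{x'y'}(\emptyset)$-consequence of $\Phi_N$ as soon as $L',x'y'$ contain the finitely many symbols and variables of $\chi_N$; consequently $\neg\chi_N\in\Phi_N\restr_{L'_{x'y'}(\emptyset)}$, and wherever $\chi_N$ holds the failure of $\Phi_N\restr_{L'_{x'y'}(\emptyset)}$ forces $d'>N$. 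Collecting the data of all the $\chi_N$ into a countable fragment makes $d'$ unbounded on $C$; since enlarging the fragment only increases $d'$, this survives the further countable enlargement to $L',x'y'$ supplied by Proposition~\ref{prop:nfrst}, after which $\biglor_n\Phi_n\restr_{L'_{x'y'}(\emptyset)}$ is an honest normal form. This secures hypothesis (1) of Corollary~\ref{cor:toolu} and provides the $d'$-unboundedness feeding hypothesis (3).

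The heart of the argument, and the main obstacle, is to build, following the blueprint of the uncountable argument of \autocite{KMS14} with $\Gamma$ in place of $\Autf L(\mon)$ and $d'$ in place of the Lascar distance, a model $M$ of size $\lvert T\rvert+\lambda$, a countable $M'\preceq M\restr_{L'}$, a subgroup $\Sigma\le\Gamma$ fixing $M$ and $M'$ setwise (and hence all $E$-classes in $Y$, as $\Sigma\le\Gamma$ and $Y\subseteq[a]_\equiv$), and a \emph{generic} type $p\in Y_M\cap[a]_\equiv$. Genericity is precisely hypothesis (3): every basic induced-open neighbourhood of $p$ is re-entered by conjugates $\sigma(p)$, $\sigma\in\Sigma$, while $d'_{M'}(\sigma(p'),p')$ is pushed beyond any prescribed $N$. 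The delicate point is the tension between requiring $\Sigma$ to stabilise $M$ setwise and keeping $\lvert M\rvert=\lvert T\rvert+\lambda$: the orbit $C=\Gamma\cdot a$ is unbounded, so one cannot simply close a model off under all of $\Gamma$, and making $M$ know too much about $a$ would collapse the $d'$-displacement of its $\Sigma$-orbit. I would resolve this exactly as in \autocite{KMS14}, by an $\omega$-step recursion that alternately enlarges $M$ to record the finitely many parameters needed at the current stage and enlarges $\Sigma$ by an automorphism furnished by transitivity of $\Gamma$ on $C$ and the $d'$-unboundedness of the previous step (one moving $a$ more than $N$ away in $d'$ while returning its $L'$-type over $M'$ into the current neighbourhood), all the while keeping $M$ setwise $\Sigma$-invariant and of the right size. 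Essentially all of the work sits in this construction.

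Finally I would check hypothesis (2), strong Choquet over $\Sigma\cdot p$: the topology induced on $Y_M$ by $L'$-formulas in $x'$ over $M'$ is pulled back along the reduct map $Y_M\to S^{L'}_{x'}(M')$ from a compact metrizable (hence Polish, hence strong Choquet) space, so it is regular and strong Choquet over any subset, in particular over $\Sigma\cdot p$, as in \autocite{KMS14}. With all three hypotheses in hand, Corollary~\ref{cor:toolu} yields $\tilde\phi,\phi$ with $R_n=\{(q_1,q_2)\in(Y_M)^2\mid d_M(q_1,q_2)\le n\}$, and since $Y$ is pseudo-closed its ``furthermore'' clause makes $\phi$ a homeomorphic embedding of $2^{\N}$ into $\mathcal P(Y_M\cap[a]_\equiv)$ with the sub-Vietoris topology. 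It then remains only to read off the four stated items: $\phi(\eta)$ is a nonempty closed $G_\delta$; for $\eta\EZ\eta'$ the witnessing $\sigma\in\Sigma\le\Gamma$ fixes $M$ setwise and satisfies $\sigma\cdot\phi(\eta)=\phi(\eta')$; distinct $\eta,\eta'$ give disjoint images because each $R_n$ contains the diagonal; and for $\eta,\eta'$ not $\EZ$-related we obtain $\phi(\eta)\times\phi(\eta')\cap\bigcup_n R_n=\emptyset$, which is exactly $\phi(\eta)\times\phi(\eta')\cap E^M=\emptyset$ since $E^M=\bigcup_n R_n=\{d_M<\infty\}$.
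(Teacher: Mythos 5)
Your proposal is correct and follows essentially the same route as the paper: reduce to $X=[a]_\equiv$ so that $E$ is orbital via $\Gamma$, fix a normal form with $d\le d_L$ via Proposition~\ref{prop:nfrm} and Corollary~\ref{cor:diam}, build the countable fragment $L',x'y'$ (augmented as in Proposition~\ref{prop:nfrst} so the restricted $\biglor_n\Phi_n$ is still a normal form), and then run the construction of \autocite[Theorem 5.1]{KMS14} with $\Gamma$ and $d'$ in place of $\Autf{L}(\mon)$ and the Lascar distance to feed Corollary~\ref{cor:toolu}. The paper's own proof is just a terser version of this, deferring the recursion for $M,M',\Sigma,p$ entirely to \autocite{KMS14}; your extra details (e.g.\ the $\chi_N$ argument for keeping $d'$ unbounded on $C$ after restricting the language) are correct elaborations of what that reference supplies.
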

\begin{proof}
We modify the proof of Theorem 5.1 of \autocite{KMS14} in a similar way to how we modified Theorem 4.13 there to  prove Theorem~\ref{thm:mainA}.

First, we may assume without loss of generality that $Y\subseteq [a]_\equiv$ (by replacing it with the intersection) and that $X=[a]_\equiv$; then $E$ is orbital as witnessed by some $\Gamma$.

Then, we may take a normal form $\biglor_n\Phi_n(x,y)$ as in Proposition~\ref{prop:nfrm}, so that the induced metric satisfies $d\leq d_L$ and $[a]_E$ is not $d$-bounded.

Finally, we want to satisfy the assumptions of Corollary~\ref{cor:toolu}, which is done in a manner analogous to Theorem 5.1 of \autocite{KMS14}: the only difference is that we need to make sure that the restriction of $\biglor_n\Phi_n(x,y)$ is still a normal form, but for that we just need to add another step to the construction to make sure we have all the witnesses (i.e.\ symbols of $L$ and variables to express necessary formulas) for triangle inequality (like we did in the proof of Proposition~\ref{prop:nfrst}).
\end{proof}

\begin{cor}\label{cor:mainGu}
Suppose that $G$ is a definable group and $H\unlhd G$ is an invariant, normal subgroup of bounded index, which is $F_\sigma$ (equivalently, generated by a countable family of type-definable sets). Suppose in addition that $K\geq H$ is a pseudo-closed subgroup of $G$. Then, if $H$ is not type-definable, then there is a small model $M$ and homeomorphic embedding $\phi\colon 2^{\N}\to\mathcal P(K_M)$ 
(where $\mathcal P(K_M)$ is equipped with sub-Vietoris topology) such that for any $\eta,\eta'\in 2^{\N}$:
\begin{enumerate}
\item
$\phi(\eta)$ is a nonempty closed $G_\delta$,
\item
if $\eta\EZ\eta'$, then the saturations $[\phi(\eta)]_{E_H^M}$ and $[\phi(\eta')]_{E_H^M}$ are equal,
\item
if $\eta\neq \eta'$, then $\phi(\eta),\phi(\eta')$ are disjoint,
\item
if $\eta,\eta'$ are not $\EZ$-related, then the saturations $[\phi(\eta)]_{E_H^M}$ and $[\phi(\eta')]_{E_H^M}$ are disjoint.
\end{enumerate}
\end{cor}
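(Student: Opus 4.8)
The plan is to follow the proof of Corollary~\ref{cor:mainG} verbatim at the level of strategy, but to feed the data into the uncountable Theorem~\ref{thm:mainAu} in place of Corollary~\ref{cor:maincont2}, and then to transport the resulting map back to the original structure along the homeomorphism of Proposition~\ref{prop:grres}. So first I would pass to the principal homogeneous space $(\mon,\mathfrak X,\cdot)$ of Theorem~\ref{thm:phd}. Since $H$ is $F_\sigma$ and not type-definable, Lemma~\ref{lem:compgrp} gives that $E_H$ is $F_\sigma$ but not closed; then Propositions~\ref{prop:grrel} and~\ref{prop:grres} show that $H$ corresponds to a bounded, $F_\sigma$ equivalence relation $E_{H,X}$ on $\mathfrak X$ which is again not type-definable (as $E_{H,X}$ is closed if and only if $E_H$ is). Two features are crucial: $\mathfrak X$ is a single complete type, and, by normality of $H$ together with Proposition~\ref{prop:grnrm}, $E_{H,X}$ is orbital, witnessed by $\Gamma=H$ acting by automorphisms.

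Next I would check the hypotheses of Theorem~\ref{thm:mainAu} for $E=E_{H,X}$, $X=\mathfrak X$, $Y=K\cdot x_0$ and $a=x_0$. The set $K\cdot x_0$ is $E_{H,X}$-saturated (since $H\le K$, so each $H$-orbit meeting $K\cdot x_0$ lies in it) and pseudo-closed (being the image of the pseudo-closed $K$ under the $\emptyset$-definable bijection $g\mapsto g\cdot x_0$). The only point with genuine content is the infinite-diameter requirement. Here I would argue directly, without the countability that underlies Corollary~\ref{cor:maincont2}: because $\mathfrak X$ is a single complete type, Fact~\ref{fct:stdiam} gives that all $E_{H,X}$-classes have one common diameter, and were it some finite $N$, then for $x,y\in\mathfrak X$ we would have $x\mathrel{E_{H,X}}y\iff d(x,y)\le N\iff\mon\models\Phi_N(x,y)$, exhibiting $E_{H,X}=\Phi_N$ as type-definable, a contradiction. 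Hence every class, in particular $[x_0]_{E_{H,X}}$, has infinite diameter with respect to any normal form, and Theorem~\ref{thm:mainAu} yields a small model $N$ of $(\mon,\mathfrak X,\cdot)$ (of size $\lvert T\rvert+\aleph_0$, as $\mathfrak X$ is a single sort) and a homeomorphic embedding $\phi_X\colon 2^{\N}\to\mathcal P((K\cdot x_0)_N)$ with the four listed properties relative to $E_{H,X}^N$.

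Finally I would transport $\phi_X$ back. Arranging $x_0\in N$ and writing $M$ for the reduct of $N$ to the language of $T$, we have $N=(M,G(M)\cdot x_0)$, so Proposition~\ref{prop:grres} supplies a homeomorphism $f\colon S_G(M)\to S_X(N)$ carrying $E_H^M$ to $E_{H,X}^N$ and $K_M$ onto $(K\cdot x_0)_N$. Setting $\phi(\eta)=f^{-1}[\phi_X(\eta)]\subseteq K_M$, the induced image map $B\mapsto f^{-1}[B]$ is a homeomorphism of the sub-Vietoris power spaces: it matches closed sets to closed sets, and $f^{-1}[B]\cap C=\emptyset\iff B\cap f[C]=\emptyset$ with $f[C]$ closed, so subbasic sets pull back to subbasic sets. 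Thus $\phi$ is a homeomorphic embedding sending each closed $G_\delta$ set $\phi_X(\eta)$ to a closed $G_\delta$, giving~(1) and~(3). Since $f$ is a bijection intertwining the two relations, it carries $E_H^M$-saturations to $E_{H,X}^N$-saturations, i.e.\ $[f^{-1}[B]]_{E_H^M}=f^{-1}[[B]_{E_{H,X}^N}]$; hence the equality of saturations in~(2) and their disjointness in~(4) transfer directly from the corresponding clauses of Theorem~\ref{thm:mainAu}.

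The main obstacle, as I see it, is not any single hard estimate but two bookkeeping points that must be handled with care: the infinite-diameter verification, which has to be obtained from single-type homogeneity (Fact~\ref{fct:stdiam}) and the type-definability of a finite-diameter relation rather than from the omnibus Corollary~\ref{cor:maincont2}, since the latter assumes a countable language; and the verification that the passage through $f^{-1}$ is compatible both with the sub-Vietoris topology and with the $E_H^M$-saturation operation, so that the four conclusions survive the translation intact.
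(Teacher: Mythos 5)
Your proposal is correct and follows essentially the same route as the paper, which simply says the proof is analogous to Corollary~\ref{cor:mainG} with Theorem~\ref{thm:mainAu} in place of Theorem~\ref{thm:mainA}, composing the resulting $\phi$ with the homeomorphism from Proposition~\ref{prop:grres}. Your two ``bookkeeping'' points (deriving the infinite-diameter hypothesis from Fact~\ref{fct:stdiam} rather than the countable-language Corollary~\ref{cor:maincont2}, and checking that $f^{-1}$ respects the sub-Vietoris topology and the saturation operation) are exactly the details the paper leaves implicit, and you handle them correctly.
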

\begin{proof}
Analogous to Corollary~\ref{cor:mainG}, only instead of Theorem~\ref{thm:mainA} we use Theorem~\ref{thm:mainAu}: we get $\phi$ for $E_{H,X}$ and we compose it with the homeomorphism from Proposition~\ref{prop:grres}.
\end{proof}

\section{Characterisation of smooth equivalence relations and Borel cardinalities}
In this section, we will attempt to characterise the bounded, orbital on types and $F_\sigma$ equivalence relations which are smooth, and in particular, compare smoothness and type-definability. Throughout this section, we will assume that the language is countable, along with all the small models and supports of considered equivalence relations (so that the relevant type spaces are Polish).

Firstly, we analyse several examples showing us some of the limitations of this attempt.

\subsection{Counterexamples}

\begin{prop}
\label{prop:infdm}
Suppose $E$ is a type-definable equivalence relation on a type-definable set $X$, and that there are countably many complete $\emptyset$-types on $X$, and infinitely many of them are not covered by singleton $E$-classes. Then $E$ has a normal form such that the classes of $E$ have unbounded diameter (that is, there is no uniform bound on the diameter).
\end{prop}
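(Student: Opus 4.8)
The plan is to build the required normal form from a single ``weight'' function on the type space. Write $S=S_X(\emptyset)$, which is compact and --- since the language and the support of $X$ are countable (equivalently, since there are only countably many types) --- metrizable. Call a type $p\in S$ \emph{special} if it is not covered by singleton $E$-classes; by hypothesis there are infinitely many special types. The whole construction reduces to producing a function $\nu\colon S\to\N$ which is \emph{lower semicontinuous} (i.e.\ $\{p:\nu(p)\le j\}$ is closed for every $j$) and \emph{unbounded on the special types}. Given such a $\nu$, I would set $\Phi_0(x,y)=((x=y)\land x\in X)$ and, for $n\ge 1$,
\[
\Phi_n(x,y)=(x=y)\lor\bigl(E(x,y)\land \nu(\tp x)+\nu(\tp y)\le n-1\bigr).
\]

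The verification that this is a normal form splits into routine checks. Each $\Phi_n$ is a genuine partial type: the set $\{(p,q):\nu(p)+\nu(q)\le n-1\}=\bigcup_{i=0}^{n-1}\{\nu\le i\}\times\{\nu\le n-1-i\}$ is a finite union of products of closed level sets (this is exactly where lower semicontinuity enters), hence its preimage in $S_{X^2}(\emptyset)$ is type-definable; so $E(x,y)\land(\dots)$ is a partial type $\bigland_i\psi_i$, and $(x=y)\lor\bigland_i\psi_i=\bigland_i((x=y)\lor\psi_i)$ is again a partial type. The sequence is increasing, and because $\nu$ is finite-valued, $\biglor_n\Phi_n$ recovers exactly $E$. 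The induced metric is $d(a,b)=\nu(\tp a)+\nu(\tp b)+1$ for $a\neq b$ with $a\Er b$ (and $0$ on the diagonal, $\infty$ off $E$); it is $\Aut(\mon)$-invariant and symmetric, and for mutually $E$-related distinct $a,b,c$ one has $d(a,b)+d(b,c)=d(a,c)+2\nu(\tp b)+1\ge d(a,c)$, so the triangle inequality holds with room to spare. Finally, each special type $p$ carries a non-singleton class, so choosing $a\models p$ and $b\neq a$ with $a\Er b$ gives $\operatorname{diam}([a]_E)\ge d(a,b)\ge\nu(p)+1$; unboundedness of $\nu$ on special types then produces classes of arbitrarily large diameter, as desired.

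It remains to construct $\nu$, which is the one genuinely non-formal step. Since every continuous (locally constant) $\N$-valued function on the compact space $S$ is bounded, $\nu$ cannot be continuous, and the idea is to exploit one-sided (lower) semicontinuity. I would proceed topologically: pick distinct special types $p_0,p_1,\dots$, pass (by compactness and metrizability) to a subsequence converging to some $p_\infty$ with every $p_k\neq p_\infty$, and fix open balls $B_k\ni p_k$ with $p_\infty\notin\overline{B_k}$ and with radii tending to $0$. The nested open sets $O_j:=\bigcup_{k>j}B_k$ satisfy $\bigcap_j O_j=\emptyset$ (any point of the intersection lies in $B_k$ for arbitrarily large $k$, hence equals $p_\infty$, which lies in no $B_k$), so putting $\nu(p):=\min\{j:p\notin O_j\}$ gives $\{\nu>j\}=O_j$ open, whence $\nu$ is lower semicontinuous and finite; and since $p_k\in B_k\subseteq O_{k-1}$ we get $\nu(p_k)\ge k$, so $\nu$ is unbounded on the special types.

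The main obstacle is precisely this tension: on a compact type space no $\N$-valued weight can be both (two-sidedly) continuous and unbounded, so the level sets $\{\nu\le j\}$ must be arranged to be closed rather than open --- this is simultaneously what keeps every $\Phi_n$ type-definable and what allows $\nu$ to run off to infinity along the special types. Everything else (increasingness of $(\Phi_n)$, recovery of $E$, and the metric axioms) is a routine calculation, with the $+1$ in the definition of $d$ supplying exactly the slack needed for the triangle inequality.
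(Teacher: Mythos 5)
Your proof is correct. The underlying mechanism is the same as the paper's: define $\Phi_n$ as ``$x=y$, or $E$ together with a closed (type-definable) condition on $(\tp x,\tp y)$ that grows with $n$'', so that types entering the filtration late force large distances inside any non-singleton class they meet. Where you differ is in how the weight $\nu$ is produced. The paper simply enumerates the countably many complete types as $p_1,p_2,\dots$ and sets $\Phi_n=(x=y)\lor\bigl(E\land \biglor_{m_1,m_2\leq n} p_{m_1}(x)\land p_{m_2}(y)\bigr)$; in effect $\nu(p_n)=n$, the sublevel sets are \emph{finite} (hence closed) unions of types, each $\Phi_n$ is even a type-definable equivalence relation, and the normal-form axioms are immediate with no topology needed. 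Your lower-semicontinuous $\nu$, built from a convergent injective sequence of special types and shrinking balls, is more work but buys a little generality: it only needs such a convergent sequence (e.g.\ a metrizable but possibly uncountable type space with infinitely many special types), whereas the enumeration trick genuinely uses countability of $S_X(\emptyset)$. Two small points: metrizability of $S_X(\emptyset)$ here follows from its being a countable compact Hausdorff space (every point is $G_\delta$, so it is first countable, hence second countable), not ``equivalently'' from countability of the language; and for $n\geq 1$ the disjunct $x=y$ in your $\Phi_n$ should carry $x,y\in X$ so that $\Phi_n$ is literally a partial type on $X$ --- a cosmetic issue shared by the paper's own formula.
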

\begin{proof}
Let $p_n$ with $n>0$ be an enumeration of complete $\emptyset$-types on $X$. Then put (for $n>0$)
\[
\Phi_n(x,y)=(x=y)\lor \left(E\land \biglor_{m_1,m_2\leq n} p_{m_1}(x)\land p_{m_2}(y)\right).
\]
It is easy to see that for each $n$, $\Phi_n(x,y)$ is a type-definable equivalence relation and $\Phi_n$ is increasing, so $\biglor_n\Phi_n(x,y)$ is trivially a normal form. In addition, any non-singleton $E$-class intersecting $p_n$ has diameter at least $n+1$.

There are infinitely many $p_n$ which intersect an $E$-class which is not a singleton, so in particular, the non-singleton classes have no (finite) uniform bound on diameter.
\end{proof}

\begin{ex}
\label{ex:acfdiam}
Let $T=ACF_0$ be the theory of algebraically closed fields of characteristic $0$. Consider $E={\equiv_{\KP}}$ as a relation on $\mon^2$. The space $S_2({\bf Q}^{\textrm{alg}})$ is countable, because $T$ is $\omega$-stable and ${\bf Q}^{\textrm{alg}}$ is a countable model. 
This also implies that $\equiv_{\KP}$ has only countably many classes (on the set of pairs).
It is also, of course, smooth, orbital and even type-definable.

Despite being rather well-behaved, $E$ still has a normal form with respect to which the classes have arbitrarily large diameter, which can be seen as follows.

(The set of realisations of) each type of the form $\tp(q,t/\emptyset)$ with $q\in {\bf Q}$ and $t$ transcendental is a single, infinite ${\equiv_{\KP}}$-class (because it is the set of realisations of a single type over ${\bf Q}^{\textrm{alg}}$), and in particular, it is not covered by singleton classes.
Furthermore, $S_2(\emptyset)$ is countable (because $T$ is $\omega$-stable). Therefore, by Proposition~\ref{prop:infdm}, $E$ has a normal form with respect to which its classes have arbitrarily large diameter.
\end{ex}

\begin{prop}
Suppose there is a non-isolated complete $\emptyset$-type $p_0$ such that $p_0(\mon)$ is not contained in a single class of some definable, bounded (equivalently, with finitely many classes) equivalence relation $E$. Then the relation
\[
E'(x,y)=(E(x,y)\lor \neg p_0(x))\land (x\equiv y)
\]
is $F_\sigma$ and smooth, but not type-definable.

Furthermore, if $E\cap {\equiv}$ is orbital on types, then so is $E'$.
\end{prop}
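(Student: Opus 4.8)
The relation $E'$ is easiest to understand type-by-type. Since the conjunct $x\equiv y$ forces $E'$ to refine $\equiv$, I would first record that on the type $p_0$ the clause $\neg p_0(x)$ is false, so $E'\restr_{p_0(\mon)}$ coincides with $E\restr_{p_0(\mon)}$, while on every other complete type the clause $\neg p_0(x)$ holds, so $E'$ is the full (total) relation there; in particular $E'$ is an equivalence relation refining $\equiv$. For the $F_\sigma$ claim I would simply chase Borel classes in the Polish space $S_{P^2}(\emptyset)$: the definable relation $E$ is clopen, the set $\{\neg p_0(x)\}$ is the preimage of the open set $S_P(\emptyset)\setminus\{p_0\}$ and hence open, while $\{x\equiv y\}$ is closed. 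Thus $E\lor\neg p_0(x)$ is open and $E'=(E\lor\neg p_0(x))\land(x\equiv y)$ is the intersection of an open and a closed set, which is $F_\sigma$ in a metrizable space.

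For smoothness I would work with $(E')^M$ on $S_P(M)$ for a countable $M$, as licensed by Proposition~\ref{prop:eqcmp}. The point is that $(E')^M$ decomposes along the (continuous) restriction map $S_P(M)\to S_P(\emptyset)$: off the closed set $(p_0)_M$ it merely separates $\emptyset$-type-restrictions, and this map is already a Borel reduction to equality, whereas on $(p_0)_M$ it agrees with $E^M$, which is closed by Proposition~\ref{prop:eqcmp} (as $E$ is definable, hence type-definable) and therefore smooth by Fact~\ref{fct:tdsmt}. Gluing a reduction $h$ on $(p_0)_M$ with the restriction map elsewhere yields a Borel reduction $p\mapsto(p\restr_{S_P(\emptyset)},h(p))$ of $(E')^M$ to equality on a Polish space, so $E'$ is smooth.

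The genuine content is \emph{non}-type-definability, and here the main obstacle is an accumulation lemma: \emph{$E$-split types} (those $q$ with $q(\mon)$ meeting at least two $E$-classes) accumulate at $p_0$. I would prove this by passing to $\mon^{eq}$ and letting $\pi\colon\mon\to\mon/E$ be the $\emptyset$-definable quotient onto the finite sort $\mon/E$. The classes meeting $p_0(\mon)$ form the set $O:=\pi[p_0(\mon)]$, a single $\Aut(\mon)$-orbit of size $m\ge 2$ (splitting hypothesis). The set $U$ of types $q$ with $\pi[q(\mon)]\subseteq O$ is clopen, being the preimage of the point $\tp(O/\emptyset)$ under the continuous map $S_P(\emptyset)\to S_{\mon/E}(\emptyset)$, and it contains $p_0$. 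The slick step is that for \emph{every} $q\in U$ the set $\pi[q(\mon)]$ is $\Aut(\mon)$-invariant (since $\pi$ is equivariant and $\Aut(\mon)$ acts transitively on $q(\mon)$) and nonempty, so, being contained in the single orbit $O$, it equals $O$; hence every $q\in U$ is split. Non-isolation of $p_0$ then furnishes a net $q_\nu\to p_0$ inside $U\setminus\{p_0\}$ of split types. Choosing $c_\nu\equiv d_\nu\models q_\nu$ with $\neg E(c_\nu,d_\nu)$, one has $c_\nu\not\models p_0$, so $E'(c_\nu,d_\nu)$; a convergent subnet of $\tp(c_\nu,d_\nu/\emptyset)$ has a limit $r^*=\tp(a^*,b^*/\emptyset)$ with $a^*,b^*\models p_0$ (continuity of restriction) and $\neg E(a^*,b^*)$ (as $\neg E$ is clopen, hence closed), whence $\neg E'(a^*,b^*)$. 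Thus $r^*\in\overline{E'}\setminus E'$, so $E'$ is not closed in $S_{P^2}(\emptyset)$ and, by Proposition~\ref{prop:eqcmp}, not type-definable.

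Finally, for the ``furthermore'' I would check orbitality type-by-type using the decomposition above. The relation $E'$ refines type. On each complete type other than $p_0$ it is total, hence trivially orbital (witnessed by $\Aut(\mon)$, which acts transitively on that type while preserving its single $E'$-class). On $p_0$ it coincides with $(E\cap{\equiv})\restr_{p_0}$, which is orbital by hypothesis. Therefore $E'$ restricted to each complete $\emptyset$-type is orbital, and since $E'$ refines type, $E'$ is orbital on types.
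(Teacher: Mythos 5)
Your proof is correct, and while the $F_\sigma$ argument, the gluing argument for smoothness, and the ``furthermore'' clause essentially coincide with the paper's (the paper merely uses the finitely many $E$-classes inside $p_0(\mon)$ as a discrete target where you use an abstract reduction $h$ of the closed relation $E^M\restr_{(p_0)_M}$, and your displayed map $p\mapsto(p\restr_{S_P(\emptyset)},h(p))$ should be read piecewise, with $h$ set to a constant off $(p_0)_M$ -- harmless, since the first coordinate already separates $(p_0)_M$ from its complement), your argument for non-type-definability takes a genuinely different route. The paper's key step is a Claim proved by contraposition: for every formula $\psi\in p_0$ there are $x\models p_0$ and $x'\not\models p_0$ with $x'\models\psi$ and $E(x,x')$, since otherwise the auxiliary definable relation $E''(x,y)=(E(x,y)\land\psi(x)\land\psi(y))\lor(\neg\psi(x)\land\neg\psi(y))$ would exhibit $p_0(\mon)$ as a finite union of its classes and hence force $p_0$ to be isolated; the paper then builds pairs $(x_n,y_n)$ with $x_n,y_n\not\models p_0$ linked by $E$ to a fixed $E$-inequivalent pair $x_0,y_0\models p_0$. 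You instead pass to the finite imaginary sort $P/E$ and observe that every complete type whose image-type agrees with that of $p_0$ is automatically $E$-split (a nonempty invariant subset of a single finite orbit is the whole orbit), that these types form a clopen neighbourhood of $p_0$, and that non-isolation therefore supplies split types $q_\nu\to p_0$ distinct from $p_0$; your witnessing pairs live inside a single type $q_\nu$ rather than straddling the boundary of $p_0(\mon)$. Both arguments use non-isolation essentially and conclude identically by exhibiting a limit of $E'$-pairs lying outside $E'$; yours buys a shorter, more conceptual accumulation lemma in place of the contrapositive Claim, while the paper's version makes explicit the dichotomy that a failure of the Claim would render $p_0$ isolated.
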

\begin{proof}
A definable and bounded equivalence relation has only finitely many classes, so $E'$ differs from ${\equiv}$ only in that one class of $\equiv$ (namely $p_0(\mon)$) is divided into finitely many pieces. Fix a countable model $M$ and a Borel reduction $f\colon S(M)\to X$ of ${\equiv}^M$ as an equivalence relation on $S(M)$ to $\Delta(X)$, equality on a Polish space $X$ (which exists because $\equiv$ is smooth, being type-definable).

Let $[p_0]_{\equiv}/E=\{ A_1,\ldots,A_n\}$. Then define $\tilde f\colon S(M)\to X\sqcup \{1,\ldots,n\}$ (where $\sqcup$ is the disjoint union and $\{1,\ldots,n\}$ has discrete topology) by
\[
\tilde f(\tp(x/M))=
\begin{cases}
f(\tp(x/M)) & \mbox{if}\;\, x\not\models p_0, \\
j & \mbox{if}\;\, x\in A_j.
\end{cases}
\]
Then clearly $\tilde f$ is Borel and witnesses that $E'$ is smooth.

$E'$ is easily seen to be $F_\sigma$, as it is the intersection of the open (and therefore $F_\sigma$, as the language is countable) set $(E(x,y)\lor \neg p_0(x))$ and the closed set $(x\equiv y)$.

It remains to show that $E'$ is not type-definable. For that, we need the following
\begin{clm}
For any formula (without parameters) $\psi\in p_0$, there is some $x\models p_0$ and $x'\not\models p_0$ such that $x'\models \psi$ and $E(x,x')$. In fact, we can find such $x'$ for any $x\models p_0$.
\end{clm}
\begin{clmproof}
The proof is by contraposition: we assume that there are no such $x,x'$ for $\psi$, and we will show that $p_0$ is isolated. Let 
\[
E''(x,y)= (E(x,y)\land \psi(x)\land \psi(y))\lor (\neg\psi(x)\land \neg \psi(y)).
\]
Then $E''$ is a definable equivalence relation which has finitely many classes (at most $1$ more than $E$) and (by the assumption), $p_0(\mon)$ is a union of $E''$-classes, of which there are only finitely many, so $p_0(\mon)$ is definable with some parameters. But since it is invariant, it implies that it is definable without parameters, and therefore $p_0$ is isolated.

Once we have some $x'$ for a single $x\models p_0$, we may obtain one for each of them simply by applying automorphisms.
\end{clmproof}
Now we choose a sequence $\varphi_n$ of formulas such that $\bigland_n\varphi_n\proves p_0$ and $\varphi_{n+1}\proves \varphi_n$. Let $x_0,y_0\models p_0$ be such that $\neg E(x_0,y_0)$ (which we can find because $p_0$ is not contained in a single $E$-class), and let $x_n$ be a sequence of elements satisfying $\varphi_n$ but not $p_0$, and simultaneously satisfying $E(x_n,x_0)$ (this sequence exists by the claim), and let $y_n$ be a sequence such that each $(x_0,x_n)$ is conjugate to $(y_0,y_n)$ (so that $x_n\equiv y_n$ and $y_n\models \varphi_n$ and $E(y_0,y_n)$).

Then any limit point of the sequence $\tp(x_n,y_n/\emptyset)$ in $S_2(\emptyset)$ is not in $E'$, even though each $\tp(x_n,y_n/\emptyset)$ is in $E'$, so $E'$ is not type-definable.

The ``furthermore'' part is obvious, since $E'$ agrees with $E\cap{\equiv}$ on $p_0$ and is total when restricted to any other type.
\end{proof}

\begin{ex}
\label{ex:ntsmt}
Consider $T=\operatorname{Th}(\Z,+)$ (the theory of additive group of integers) and the type $p_0=\tp(1/\emptyset)$ (the type of an element not divisible by any natural number).

The type $p_0$ is not isolated, and it is not contained in a single class of the definable relation $E$ of equivalence modulo $3$, while $E\cap {\equiv}$ has at most two classes in each complete type, so it is orbital on types due to Corollary~\ref{cor:2cl}.

In particular -- by the preceding proposition -- the relation $E'(x,y)$ which says that $x\equiv y$ and they either have the same residue modulo $3$ or else each of them is divisible by some natural number (i.e.\ they are not of the same type as $1$), is $F_\sigma$, orbital on types and smooth, but not type-definable.
\end{ex}

\begin{ex}
\label{ex:prbfin}
Suppose there is some $a\in \mon$ such that $\equiv_{\KP}$ has two classes on $[a]_{\equiv}$ (like $a=\sqrt 2$ for $\mon\models ACF_0$), so that ${\equiv_{\KP}}\restr_{[a]_{\equiv}}\sim_B \Delta(2)$. Consider the infinite disjoint union of copies of $\mon$, i.e.\ the multi-sorted structure $(\mon_n)_{n\in\N}$ where each $\mon_n$ is a distinct sort isomorphic to $\mon$ (without any relations between elements of $\mon_n$ and $\mon_m$ for $n\neq m$). Then consider $\overline{a}=(a_n)_{n\in \N}$ where $a_n$ is the element of $\mon_n$ corresponding to $a$. Then $[\overline a]_{\equiv}=\prod_n [a_n]_\equiv$ and similarly \[
(b_n)_n\equiv_{\KP} (c_n)_n\iff \bigland_n b_n\equiv_{\KP} c_n
\]
(by \autocite[Lemma 3.7(iii)]{CLPZ01}). Now consider the following relation $E$ on $[\overline a]_{\equiv}$:
\[
(b_n)_n \Er (c_n)_n\iff \{n\mid b_n \not\equiv_{\KP} c_n\} \textrm{ is finite}.
\]
Then $E$ is refined by $\equiv_{\KP}$, but on the other hand, 
\[
E\sim_B ({\equiv_{\KP}}\restr_{[a]_{\equiv}})^\N/\textrm{Fin}\sim_B \Delta(2)^\N/\textrm{Fin}={\EZ}.
\]
(This can be seen e.g.\ by considering the equivalence relation on $[\overline{a}]_{\equiv}/{\equiv_{\KP}}$ induced by $E$, which is easily seen to be bireducible with $E^M$ for a countable model $M$, using Lemma~\ref{lem:cons} and Proposition~\ref{prop:eqcmp}.)

In particular, $E$ is not smooth, it is easy to see that $E$ is $F_\sigma$ (because $\equiv_{\KP}$ is type-definable and there are countably many finite subsets of $\N$), and it is also orbital, as its classes are just the orbits of the group
\[
\left\{(\sigma_n)_n\in \prod_{n\in\N}\Aut(\mon_n) \mathrel{}\middle|\mathrel{} \textrm{for all but finitely many }n,\, \sigma_n\in \Autf{KP}(\mon_n)\right\}.
\]
Additionally, $E$ is only defined on a single type and is not type-definable, so by Corollary~\ref{cor:maincont2}, all its classes have infinite diameter.
\end{ex}

\begin{ex}
\label{ex:nreft}
Consider a saturated model $K$ of the theory $T=\Th (\R,+,\cdot,0,1,<)$ of real closed fields. For each $n\in {\bf N^+}$ we have a type-definable equivalence relation $\Phi_n(x,y)=\bigland_{k\geq n} (x<k\liff y<k)$. Consider the relation $E=\biglor_n \Phi_n$ (with $\Phi_0(x,y)=\{x=y\}$, as before):
\begin{itemize}
\item
$E$ is an $F_\sigma$ equivalence relation (and since $\Phi_n$ is an increasing sequence of equivalence relations, it is easy to see that $\biglor_n \Phi_n$ is its normal form).	 
\item
E has two classes: the class $C_{\textrm{fin}}$ of elements bounded from above by some natural number, and its complement $C_{\infty}$. Therefore, it is bounded and smooth.
\item
$C_{\textrm{fin}}$ is a class which is not pseudo-closed (otherwise, by compactness, it would intersect $\bigland_n x>n=C_{\infty}$).
\end{itemize}
This combination of features is possible because $E$ does not refine $\equiv$ (and therefore it is not orbital on types), so we cannot apply Theorem~\ref{thm:mainA} to it.
\end{ex}

\begin{ex}[{\autocite[Example 3.38]{KM14}}]
\label{ex:simon}
Let $T$ be the theory of an infinite dimensional vector space over ${\bf F}_2$ in the language $(+,0,U_n)_{n\in \N}$ (i.e.\ an infinite abelian group of exponent $2$), where $U_n$ are predicates for independent subspaces of codimension $1$ (i.e.\ subgroups of index $2$).

Consider $G=\mon\models T$ as a definable (additive) group, and let $H\leq G$ be the intersection of all $U_n$. Then $[G:H]=\mathfrak c$, and cosets of $H$ are exactly the types $X_\eta=\bigcap_nU_n^{\eta_n}$, where $\eta\colon \N\to \{0,1\}$, while $U_n^0=U_n$ and $U_n^1=\mon\setminus U_n$.

Consider the subspaces $W_\theta\leq G$ defined as $W_\theta=\pi^{-1}[\ker(\theta)]$, where $\pi\colon G\to G/H$ is the quotient map, and $\theta$ is a nonzero functional $G/H\to {\bf F}_2$. Each $\theta$ is uniquely determined by $W_\theta$ (since its value is $0$ on $\pi[W_\theta]$ and $1$ elsewhere and $\ker\pi$ is contained in all $W_\theta$), so there are $\lvert (G/H)^*\rvert=\mathfrak c >\aleph_0$ distinct $W_\theta$, in particular some $W=W_\theta$ is not definable.

On the other hand, $W$ is invariant, as it is the union of some $X_\eta$ which are type-definable, and $[G:W]=2$ (because $W$ has codimension $1$), so $W$ is not type-definable (if it was, its complement would also be type-definable, as it is invariant and a coset of $W$).

Let us expand $\mon$ to $(\mon,\mathfrak X,\cdot)$, where $\mathfrak X$ is a principal homogeneous space for $G$. By Proposition~\ref{prop:grrel}, $W$ induces an invariant equivalence relation $E_{W,\mathfrak X}$ on $\mathfrak X$ which has two classes, is orbital by Proposition~\ref{prop:grnrm} (or Corollary~\ref{cor:2cl}) and not type-definable by Proposition~\ref{prop:grres}.
\end{ex}

\begin{rem}
The equivalence relation in the previous example is not type-definable, and it is unlikely to even be Borel, as the subspace $W$ is the kernel of an almost arbitrary linear functional, which can be very ``wild''. It does show, however, that we need some ``definability'' hypotheses beyond invariance for the likes of Theorem~\ref{thm:twN}. 
\end{rem}

\subsection{Main characterisation theorem}

\begin{thm}
\label{thm:mainB}
Let $E$ be an $F_\sigma$, bounded, orbital on types equivalence relation on a type-definable set $X$, and $d$ be the invariant metric induced by a normal form of $E$. Then consider the four conditions:
\begin{enumerate}
\item
\label{it:bdd}
$E$ classes have uniformly bounded diameter with respect to $d$.
\item
\label{it:tdf}
$E$ is type-definable.
\item
\label{it:smt}
$E$ is smooth.
\item
\label{it:fnd}
$E$ classes have finite diameter (equivalently, by Corollary~\ref{cor:diam}, they are pseudo-closed).
\end{enumerate}
These conditions are related as follows:
\begin{itemize}
\item
\eqref{it:bdd} implies \eqref{it:tdf},\eqref{it:smt},\eqref{it:fnd},
\item
\eqref{it:tdf} implies \eqref{it:smt} and \eqref{it:fnd}, but not \eqref{it:bdd}
\item
\eqref{it:smt} is equivalent to \eqref{it:fnd}, but it does not imply \eqref{it:tdf} or \eqref{it:bdd}.
\end{itemize}

If we assume, in addition, that $E$ refines ${\equiv_{\KP}}$ (on $X$), then conditions \eqref{it:tdf},\eqref{it:smt},\eqref{it:fnd} are equivalent (and equivalent to simply $E={\equiv_{\KP}}\restr_X$) and are implied by, but do not imply \eqref{it:bdd}.

If we assume instead that $E$ is only defined on a single complete $\emptyset$-type, then all conditions are equivalent.
\end{thm}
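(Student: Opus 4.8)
The plan is to establish the whole web of implications by treating the ``easy'' directions as consequences of earlier results and concentrating the real work on the equivalence \eqref{it:smt}$\,\Leftrightarrow\,$\eqref{it:fnd}. Throughout I use that $E$ refines $\equiv$ (being orbital on types), so that Corollary~\ref{cor:diam}, Fact~\ref{fct:stdiam} and Theorem~\ref{thm:mainA} all apply. For \eqref{it:bdd}$\Rightarrow$\eqref{it:tdf}: if every class has diameter $\le N$, then $a\Er b$ forces $d(a,b)\le N$, so $E$ coincides as a set of pairs with the partial type $\Phi_N(x,y)$ and is thus type-definable. Then \eqref{it:tdf}$\Rightarrow$\eqref{it:smt} is exactly Fact~\ref{fct:tdsmt}, \eqref{it:tdf}$\Rightarrow$\eqref{it:fnd} is the remark following Corollary~\ref{cor:diam} (type-definable classes are pseudo-closed, hence of finite diameter), and \eqref{it:bdd}$\Rightarrow$\eqref{it:fnd} is trivial; \eqref{it:bdd}$\Rightarrow$\eqref{it:smt} follows by composing.

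For \eqref{it:smt}$\Rightarrow$\eqref{it:fnd} I argue by contraposition: if some class $C$ has infinite diameter, I put $Y=p(\mon)$ for the complete type $p\supseteq C$ (this $Y$ is $E$-saturated since $E$ refines $\equiv$, and pseudo-closed since it is type-definable) and apply Theorem~\ref{thm:mainA} to conclude that $E\restr_Y$ is non-smooth. Since $Y_M$ is closed in $S_X(M)$ (by Proposition~\ref{prop:eqcmp} together with Corollary~\ref{cor:satinv}), non-smoothness of $E\restr_Y$ forces non-smoothness of $E$, as required.

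The substantive direction is \eqref{it:fnd}$\Rightarrow$\eqref{it:smt}, and this is where I expect the main obstacle. By Fact~\ref{fct:stdiam} the $E$-classes inside a single complete $\emptyset$-type $p$ all share one diameter $N_p$, finite by hypothesis, so $E\restr_{p(\mon)}$ equals $\Phi_{N_p}\restr_{p(\mon)}$ and is type-definable; the difficulty is to patch these fibrewise type-definable pieces into one global Borel reduction, since $N_p$ need not be bounded as $p$ varies. The plan is to slice $S_X(\emptyset)$ by the value of $N_p$, setting $S_n=\{p\mid N_p=n\}$, and the key point is that each $S_n$ is Borel: $\{p\mid N_p>n\}$ is the $\pi_1$-image of $(E\setminus\Phi_n)\cap D$, where $D=\{q\in S_{X^2}(\emptyset)\mid \pi_1(q)=\pi_2(q)\}$ is closed and $E\setminus\Phi_n$ is $F_\sigma$, so this image is $F_\sigma$ in the compact metric space $S_X(\emptyset)$. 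On the $E^M$-invariant Borel piece $g^{-1}[S_n]$ (where $g\colon S_X(M)\to S_X(\emptyset)$ is restriction) the relation $E^M$ agrees with the closed relation $\Phi_n^M$, hence is smooth; assembling the countably many fibrewise reductions over this Borel partition, tagging outputs by $n$, yields a Borel reduction of $E^M$ to equality on a Polish space. The delicate points to verify carefully are precisely the Borel-ness of the slices $S_n$ and the compatibility of the tagged reductions across the partition.

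Finally, the non-implications and the two addenda are short. Example~\ref{ex:acfdiam} provides a type-definable $E$ admitting a normal form of unbounded diameter, so \eqref{it:tdf}$\not\Rightarrow$\eqref{it:bdd}; Example~\ref{ex:ntsmt} provides a smooth, non-type-definable $E$, so \eqref{it:smt}$\not\Rightarrow$\eqref{it:tdf}, and \eqref{it:smt}$\not\Rightarrow$\eqref{it:bdd} then follows formally since \eqref{it:bdd}$\Rightarrow$\eqref{it:tdf}. If $E$ refines $\equiv_{\KP}$ and \eqref{it:fnd} holds, the ``in addition'' clause of Corollary~\ref{cor:diam} gives $\equiv_{\KP}\restr_X\subseteq E$, whence $E=\equiv_{\KP}\restr_X$ is type-definable; combined with the general part this makes \eqref{it:tdf},\eqref{it:smt},\eqref{it:fnd} equivalent to $E=\equiv_{\KP}\restr_X$, still without forcing \eqref{it:bdd} by Example~\ref{ex:acfdiam}. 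If $E$ lives on a single $\emptyset$-type, Fact~\ref{fct:stdiam} makes ``finite diameter'' coincide with ``uniformly bounded diameter,'' yielding \eqref{it:fnd}$\Rightarrow$\eqref{it:bdd} and collapsing all four conditions.
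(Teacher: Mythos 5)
Your proposal is correct, and for most of the theorem it follows the same route as the paper: the chain \eqref{it:bdd}$\Rightarrow$\eqref{it:tdf}$\Rightarrow$\eqref{it:smt}, the contrapositive use of Theorem~\ref{thm:mainA} for \eqref{it:smt}$\Rightarrow$\eqref{it:fnd}, Examples~\ref{ex:acfdiam} and~\ref{ex:ntsmt} for the non-implications, Corollary~\ref{cor:diam} for the $\equiv_{\KP}$ addendum, and Fact~\ref{fct:stdiam} for the single-type addendum all match. The one place where you genuinely diverge is \eqref{it:fnd}$\Rightarrow$\eqref{it:smt}. The paper's argument (due to Itay Kaplan) is a two-line appeal to the Harrington--Kechris--Louveau dichotomy: finite diameter makes every class of $E^M$ closed via Corollary~\ref{cor:diam}, a continuous reduction of $\EZ$ to $E^M$ would make every $\EZ$-class closed by pulling back, and $\EZ$-classes are not closed. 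Your argument instead stratifies $S_X(\emptyset)$ by the common class-diameter $N_p$ (using Fact~\ref{fct:stdiam}), checks that each stratum is Borel because $\{p\mid N_p>n\}$ is a continuous image of the $F_\sigma$ set $E\setminus\Phi_n$, and glues fibrewise reductions over this countable invariant Borel partition. This is a valid alternative: it trades the dichotomy theorem for elementary (but fussier) descriptive set theory, and the ``delicate points'' you flag are real but standard --- the Borelness of the slices goes through as you say, and to conclude that $E^M$ restricted to the Borel piece $g^{-1}[S_n]$ is smooth from the fact that it is \emph{relatively} closed there, one should pass to a finer Polish topology making the piece clopen (or argue similarly), since $g^{-1}[S_n]$ need not be Polish in the subspace topology. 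The paper's route is shorter and avoids this bookkeeping; yours is more self-contained in that it does not invoke the dichotomy.
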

\begin{proof}
For the first part:
\begin{itemize}
\item
\eqref{it:bdd} trivially implies \eqref{it:tdf}
\item
That \eqref{it:tdf} implies \eqref{it:smt} follows from Fact~\ref{fct:tdsmt}.
\item
That \eqref{it:smt} implies \eqref{it:fnd} follows from (the contraposition of) Theorem~\ref{thm:mainA} with $Y=X$.
\item
That \eqref{it:fnd} implies \eqref{it:smt} follows from the fact that in this case, classes of $E^M$ are closed. Indeed, if $E^M$ was not smooth, then the Harrington-Kechris-Louveau dichotomy would imply that there exists a continuous reduction of $\EZ$ to $E^M$, which would contradict the fact that classes of $\EZ$ are not closed.  (This was pointed out to the authors by Itay Kaplan after circulation of a preprint of this paper.)
\item
That \eqref{it:tdf} does not imply \eqref{it:bdd} follows from Example~\ref{ex:acfdiam}.
\item
That \eqref{it:smt} does not imply \eqref{it:tdf} is demonstrated by Example~\ref{ex:ntsmt}.
\item
Other listed implications (or lack thereof) are logical consequences of the ones above.
\end{itemize}

To show that the last three conditions are equivalent if $E$ refines $KP$-type, it is enough to show that \eqref{it:fnd} implies \eqref{it:tdf}. But it follows easily from Corollary~\ref{cor:diam}. That it does not imply \eqref{it:bdd} can be seen in Example~\ref{ex:acfdiam}.

To show that all four are equivalent if $E$ is defined on a single type, it is enough to notice that \eqref{it:fnd} implies \eqref{it:bdd}. But this is immediate from the fact that all classes have the same diameter (by Fact~\ref{fct:stdiam}).
\end{proof}

\begin{rems} $\,$
\begin{itemize}
\item
We have in particular, for bounded $E$ which are $F_\sigma$, orbital on types and either defined on a single complete $\emptyset$-type, or refining ${\equiv_{\KP}}$, that $E$ is type-definable if and only if it is smooth.
\item
The property that $E$ has only countably many classes implies \eqref{it:tdf},\eqref{it:smt},\eqref{it:fnd}, but not \eqref{it:bdd} (and is not implied by any of the conditions). \eqref{it:tdf} follows immediately from Theorem~\ref{thm:twN}, while the others follow as a consequence of Theorem~\ref{thm:mainB}. That having countably many classes does not imply \eqref{it:bdd}, we have seen in Example~\ref{ex:acfdiam}. That none of the conditions imply that there is only a countable number of classes can be seen by examining $\equiv$ in a non-small theory.
\item
Example~\ref{ex:prbfin} along with Corollary~\ref{cor:diam} show that the condition that $E$ is refined by $\equiv_{\KP}$ is strictly weaker than all the conditions in Theorem~\ref{thm:mainB}, even with the added assumption that $E$ is only defined on a single type. (Of course, it is not \emph{strictly} weaker if we assume that $E$ refines $\equiv_{\KP}$, although trivially so.)
\end{itemize}
\end{rems}

\subsection{Possible extensions of the characterisation theorem}
If, in the first part of Theorem~\ref{thm:mainB}, we drop the assumption that $E$ is orbital on types (so we allow $E$ to not refine type), then \eqref{it:tdf} does not imply 
\eqref{it:fnd} -- as witnessed by Example~\ref{ex:nonf} (though \eqref{it:bdd} certainly still implies the other conditions and \eqref{it:tdf} implies \eqref{it:smt} and that the classes are pseudo-closed).

We can, however, replace the assumption that $E$ is {\it orbital on types} by the assumption that it {\it refines type}.
(Note that by Example~\ref{ex:finperm} and Example~\ref{ex:trace}, this is not a trivial replacement.)
In this case, we do not know whether \eqref{it:smt} implies \eqref{it:fnd} (even if we assume that $E$ refines $\equiv_{\KP}$ or is defined on a single type),
whereas the other implications hold as before.

If we drop the requirement that $E$ is $F_\sigma$, points \eqref{it:bdd} and \eqref{it:fnd} do not make sense. However, the other two do, so another question that arises naturally is the following.

\begin{qu}
Suppose that $E$ is a Borel, bounded equivalence relation which is defined on a single complete $\emptyset$-type or which refines $\equiv_{\KP}$. 
Is it true that smoothness of $E$ implies that $E$ is type-definable?
\end{qu}

In light of Theorem~\ref{thm:mainB},  the assumption that $E$ is defined on a single complete $\emptyset$-type or that $E$ refines $\equiv_{\KP}$ in the above question is natural -- as we have seen in Theorem~\ref{thm:mainB}, without this assumption, even for $F_\sigma$-relations, smoothness does not imply type-definability.
Note also that we need at least some weak ``definability'' assumptions, as illustrated by Example~\ref{ex:simon}. Borelness seems the most natural such an assumption, but it is possible that one should assume something stronger. On the other hand, it is conceivable that some weaker assumptions than Borelness would suffice. Maybe one should also add the assumption that the relation in question is orbital on types, which is present in Theorem~\ref{thm:mainB}.

The methods used in this paper are not suitable to deal with the above question (in the case when the relation is not $F_\sigma$), because in this context, we do not have any natural notion of a distance or diameter.

\section{Applications to definable group extensions}
\subsection{Introduction to extensions by abelian groups}

This section will show an important application of Corollary~\ref{cor:mainGu} to definable extensions by abelian groups. More specifically, we deal with short exact sequences of groups of the form
\begin{equation}
\label{eq:shexs}\tag{$\dagger$}
0\to A\to \wt G\to G\to 0,
\end{equation}
where $A$ is an abelian group. In this case, there is a full algebraic description of $\wt G$ in terms of an action of $G$ on $A$ by automorphisms (induced by conjugation in $\wt G$) and a (2-)cocycle $h\colon G^2\to A$ (to be defined shortly). We define multiplication on $A\times G$ by the formula
\begin{equation}
\label{eq:mext}
\tag{$\dagger\dagger$}
(a_1,g_1)\cdot (a_2,g_2)=(a_1+g_1\cdot a_2+h(g_1,g_2),g_1g_2).
\end{equation}
And a cocycle is defined as follows.
\begin{dfn}
Let $G$ be a group acting on an abelian group $A$. A function $h\colon G^2\to A$ is a \emph{2-cocycle} if it satisfies, for all $g,g_1,g_2,g_3\in G$, the following equations:
\begin{align*}
h(g_1,g_2)+h(g_1g_2,g_3)&=h(g_1,g_2g_3)+g_1\cdot h(g_2,g_3),\\
h(g,e)=h(e,g)&=e.
\end{align*}
\end{dfn}

The equation \eqref{eq:mext} endows $A\times G$ with group structure -- with inverse $(a,g)^{-1}=(-g^{-1}\cdot a-h(g^{-1},g),g^{-1})$ -- which is compatible with the exact sequence \eqref{eq:shexs}, and any $\wt G$ in such a short exact sequence has this form. In this language, the properties of the action and of $h$ reflect the properties of the extension, e.g.\ central extensions correspond to trivial actions of $G$ on $A$. More information about this subject (in abstract algebraic terms) can be found in e.g.\ \autocite{Rotman} (section 10.3., and in particular the part up to and including Theorem 10.14).

\begin{dfn}
We work in an arbitrary given structure.
A \emph{definable extension of a definable group $G$ by a definable abelian group $A$} is a tuple $(G,A,*,h)$, where $*$ is a definable action of $G$ on $A$ by automorphisms and $h\colon G^2\to H$ is a definable 2-cocycle. We will also call that the group $\wt G=A\times G$ with multiplication defined as in \eqref{eq:mext}.
\end{dfn}

\begin{rem}
The group $\wt G$ introduced above is definable.
\end{rem}

In \autocite{GK13}, the authors have shown that such extensions can, under some additional assumptions, give new examples of definable groups with $G^{00}\neq G^{000}$, building upon and extending the intuitions from the first known example with this property, found in \autocite{CP12}, namely the universal cover of $SL_2(\R)$. They also pose some questions and conjectures, one of which will be proved at the end of this section. To state their main result, we need the following definition.

\begin{dfn}
A 2-cocycle $h\colon G^2\to A$ is \emph{split via $f\colon G\to A$} if for all $g_1,g_2\in G$ we have
\begin{align*}
h(g_1,g_2)&=df(g_1,g_2):=f(g_1)+g_1\cdot f(g_2)-f(g_1g_2),\\
f(e)&=0.
\end{align*}
\end{dfn}

Now, we recall the main theorem from \autocite{GK13} (namely, Theorem 2.2 from there). Later in this section, we will recall and prove two equivalent conjectures from \autocite{GK13} which imply that the main technical assumption in this theorem (i.e.\ assumption (i) below) is not only a sufficient (together with (ii)) but also a necessary condition in a rather general situation. 
\begin{thm}
\label{thm:twrgk}
Let $G$ be a group acting by automorphisms on an abelian group $A$, where $G$, $A$ and the action of $G$ on $A$ are $\emptyset$-definable in a (saturated) structure $\G$, and let $h\colon G \times G \to A$ be a 2-cocycle which is $B$-definable in $\G$ and with finite range $\rng(h)$ contained in $\dcl(B)$ (the definable closure of $B$) for some finite parameter set $B\subset \G$. By $A_0$ we denote the subgroup of $A$ generated by $\rng(h)$. Additionally, let $A_1$ be a bounded index subgroup of $A$ which is type-definable over $B$ and which is invariant under the action of $G$.
Finally, suppose that:
\begin{enumerate} 
\item[(i)] the induced 2-cocycle $\overline{h}_{|{G}^{00}_B \times {G}^{00}_B}\colon {G}^{00}_B \times {G}^{00}_B \to A_0/\left({A}_{1} \cap A_0\right)$ is non-split via $B$-invariant functions (i.e.\ there is no $B$-invariant function $f\colon G^{00}_B\to A_0/(A_1\cap A_0)$ such that $\overline{h}\restr_{{G}^{00}_B}$ is split via $f$),
\item[(ii)] $A_0/\left( {A}_{1} \cap A_0 \right)$ is torsion free (and so isomorphic with $\Z^n$ for some natural $n$).
\end{enumerate}
Then $\wt{G}^{000}_B \ne \wt{G}^{00}_B$ (where $\wt G$ is $G\times A$ with the group structure defined by \eqref{eq:mext}).
\end{thm}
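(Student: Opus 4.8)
The plan is to prove the inequality by exhibiting the difference after two successive reductions and then reading it off from the cohomology class of the induced cocycle. Recall that it suffices to produce a $B$-invariant subgroup of $\wt G$ of bounded index which does \emph{not} contain $\wt G^{00}_B$, since every such subgroup contains $\wt G^{000}_B$. I would first observe that $A_1$, being $G$-invariant, is normal in $\wt G$: conjugation of $(a,e)$ by any $(b,g)$ equals $(g\cdot a,e)$, the cocycle and $b$-contributions cancelling by the identity $h(g,g^{-1})=g\cdot h(g^{-1},g)$. Hence we may pass to the quotient $\wt G'=\wt G/A_1$, a definable extension of $G$ by the bounded group $A'=A/A_1$; because a surjective definable homomorphism maps $\wt G^{00}_B$ and $\wt G^{000}_B$ onto $(\wt G')^{00}_B$ and $(\wt G')^{000}_B$ respectively, it is enough to prove $(\wt G')^{000}_B\neq(\wt G')^{00}_B$. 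In $\wt G'$ the cocycle takes values in $\overline{A_0}:=A_0/(A_1\cap A_0)$, which is finitely generated (by the finite set $\rng(h)\subseteq\dcl(B)$) and, by hypothesis (ii), torsion free, so $\overline{A_0}\cong\Z^n$ with $n\geq 1$ (otherwise $\overline h\restr_{G^{00}_B}$ would split via the zero function, against (i)).

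Next I would localise over $G^{00}_B$. The action $G\to GL_n(\Z)$ on $\overline{A_0}$ is definable into a countable discrete group, so all orbits are finite and all stabilisers of finite index; therefore the bounded-index subgroup $G^{00}_B$ acts trivially, and $\overline h\restr_{G^{00}_B}$ is a \emph{central} $\Z^n$-valued cocycle. I would then compare $\wt G'$ over $G^{00}_B$ with the abstract central extension
\[
0\to \overline{A_0}\to \hat G\to G^{00}_B\to 0
\]
built from $\overline h\restr_{G^{00}_B}$, via a $B$-invariant correspondence on the preimage of $G^{00}_B$; both $\hat G^{00}$ and $\hat G^{000}$ surject onto $G^{00}_B$ (as $\Z^n$ is bounded), so the whole of $\hat G^{00}/\hat G^{000}$ is concentrated in the failure of the $B$-invariant set-section $g\mapsto(0,g)$ to be a homomorphism, i.e.\ in the cohomology class of $\overline h\restr_{G^{00}_B}$.

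The heart of the matter, and the step I expect to be the main obstacle, is to turn condition (i) into the strict inequality. In one direction, a $B$-invariant splitting $f\colon G^{00}_B\to\Z^n$ of $\overline h\restr_{G^{00}_B}$ has graph a $B$-invariant bounded-index complement, collapsing the two components; so non-splitting is necessary. For the converse I would realise $\hat G^{000}$ as the intersection of the $B$-invariant finite-index subgroups coming from the reductions $\overline h\bmod m$ into $(\Z/m)^n$, which assemble into a coherent $B$-invariant ``section'' $\varphi\colon G^{00}_B\to\widehat{\Z^n}$ of the profinite completion. Here torsion freeness (ii) is decisive: it gives the embedding $\Z^n\hookrightarrow\widehat{\Z^n}$ and ensures that $\varphi$ descends to a genuine $\Z^n$-valued $B$-invariant splitting exactly when it takes values in $\Z^n$ — whereas for a finite coefficient group the analogous obstruction would vanish. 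Since (i) forbids any integral splitting, $\varphi$ must escape $\Z^n$, so the corresponding intersection of invariant bounded-index subgroups does not surject onto $G^{00}_B$ and hence cannot contain $\hat G^{00}$. Transporting this witness back through the correspondence and the quotient $\wt G\to\wt G'$ yields a $B$-invariant bounded-index subgroup of $\wt G$ not containing $\wt G^{00}_B$, giving $\wt G^{000}_B\neq\wt G^{00}_B$. The delicate points are the precise identification of $\hat G^{00}$ and $\hat G^{000}$ and the calibration between integral, profinite and finite-level splittings, which is exactly where finiteness of $\rng(h)$, $B$-invariance, and torsion freeness combine.
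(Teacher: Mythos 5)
First, a point of order: the paper does not prove Theorem~\ref{thm:twrgk} at all --- it is explicitly \emph{recalled} from \autocite{GK13} (Theorem 2.2 there), so there is no in-paper proof to compare yours against. Judged on its own terms, your preliminary reductions are sound and match the standard setup: $A_1$ is normal (your cancellation $h(g,g^{-1})=g\cdot h(g^{-1},g)$ is the right cocycle identity), the cocycle identity also shows $G$ preserves $A_0$, the stabiliser argument shows $G^{00}_B$ acts trivially on $A_0/(A_1\cap A_0)\cong\Z^n$ (this is exactly the Fact quoted later in Section 5), and $n\geq 1$ follows from (i). These steps are fine, modulo some care about $\wt G/A_1$ being only hyperdefinable.

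The step you yourself flag as the main obstacle is, however, a genuine gap rather than a deferrable technicality. Your mechanism rests on three unjustified (and in places false) claims. (1) You need each reduction $\overline h\bmod m$ to split via a $B$-invariant function $f_m\colon G^{00}_B\to(\Z/m)^n$ in order to assemble $\varphi\colon G^{00}_B\to\widehat{\Z^n}$; nothing forces this, and the underlying heuristic ``for a finite coefficient group the obstruction would vanish'' is false --- central extensions by finite groups need not split (e.g.\ $\Z/4\to\Z/2$). (2) Realising $\hat G^{000}$ as an intersection of \emph{finite-index} invariant subgroups cannot work: $\wt G^{000}_B$ is the smallest $B$-invariant subgroup of \emph{bounded} (up to $2^{|T|+|B|}$) index, and in the motivating example (the universal cover of $SL_2(\R)$) the relevant group has no proper invariant finite-index subgroups at all, yet the theorem applies. (3) Your claim that $\hat G^{000}$ surjects onto $G^{00}_B$ is unjustified (the theorem does not assume $G^{00}_B=G^{000}_B$) and then contradicts your later assertion that the intersection ``does not surject onto $G^{00}_B$''. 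The actual argument in \autocite{GK13} runs in the opposite, more elementary direction: assuming $\wt G^{000}_B=\wt G^{00}_B$, one uses the finite range of $h$, the injection of Lemma~\ref{lem:pmap} (which is Proposition 2.14 of \autocite{GK13}) and torsion-freeness (ii) to \emph{construct} a $B$-invariant splitting $f\colon G^{00}_B\to A_0/(A_1\cap A_0)$ directly from the choice $g\mapsto a_g$ with $(a_g,g)\in\wt G^{000}_B$, contradicting (i). If you want a workable proof, that contrapositive construction --- not a profinite approximation --- is the missing idea.
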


\subsection{Main theorem for definable group extensions}
In this subsection, we will fix some (arbitrary) definable extension of a definable group $G$ by an abelian definable group $A$, the group $\wt G=A\times G$ with cocycle $h$ and the corresponding short exact sequence
\[
0\to A\to \wt G\overset{\pi} \to G\to 0.
\]

We intend to prove the following theorem, and to that end, we will use Corollary~\ref{cor:mainGu} and two other results which we will obtain soon.

\begin{thm}
\label{thm:mainext}
Suppose $\wt H\unlhd \wt G$ is invariant and of bounded index, generated by a countable family of type-definable sets, contained in a type-definable group $\overline H\leq \wt G$. Put $A_H:=\overline H\cap A$ and $H=\pi[\overline H]$ (both of these are naturally type-definable groups).

Assume that $H=\pi[\wt H]$ and that it acts trivially on $A_H/\wt H\cap A$, while $\wt H\cap A$ is type-definable. Then $\wt H$ is type definable.
\end{thm}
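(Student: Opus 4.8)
The plan is to argue by contraposition, using Corollary~\ref{cor:mainGu} to produce a rich family of $\widetilde H$-cosets and then deriving a contradiction from the extra algebraic hypotheses. First I would record the algebraic picture. Since $\pi[\widetilde H]=H=\pi[\overline H]$ and $\ker(\pi\restr_{\overline H})=\overline H\cap A=A_H$, every element of $\overline H$ differs from an element of $\widetilde H$ by an element of $A_H$; together with $\widetilde H\cap A_H=\widetilde H\cap A=:A_H^0$ this gives $\overline H=A_H\cdot\widetilde H$ and (using $\widetilde H\unlhd\overline H$) a group identification $\overline H/\widetilde H\cong A_H/A_H^0$. Here $A_H=\overline H\cap A$ is type-definable and $A_H^0$ is type-definable by hypothesis, so $A_H/A_H^0$ is a type-definable abelian group of bounded index; moreover the assumption that $H$ acts trivially on $A_H/A_H^0$ makes $A_H/A_H^0$ \emph{central} in $\overline H/A_H^0$ (conjugation in $\widetilde G$ on the normal subgroup $A$ factors through the $G$-action). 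The point of centrality is that it turns the coset space $\overline H/\widetilde H$ into the group $A_H/A_H^0$, whose logic topology is compact Hausdorff; this compact Hausdorff space is where the contradiction will live.

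Now suppose, for contradiction, that $\widetilde H$ is not type-definable. As $\widetilde G$ is a definable group, $\widetilde H\unlhd\widetilde G$ is invariant, normal, of bounded index and $F_\sigma$, and $K:=\overline H$ is pseudo-closed with $\widetilde H\le\overline H$, so Corollary~\ref{cor:mainGu} applies and yields a small model $M$ and a homeomorphic embedding $\phi\colon 2^{\N}\to\mathcal P(\overline H_M)$ (with the sub-Vietoris topology) such that each $\phi(\eta)$ is a nonempty closed set, $\eta\EZ\eta'$ implies $[\phi(\eta)]_{E_{\widetilde H}^M}=[\phi(\eta')]_{E_{\widetilde H}^M}$, and $\eta,\eta'$ being $\EZ$-inequivalent implies $[\phi(\eta)]_{E_{\widetilde H}^M}\cap[\phi(\eta')]_{E_{\widetilde H}^M}=\emptyset$. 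Thus the assignment $\eta\mapsto\mathrm{Sat}(\eta):=[\phi(\eta)]_{E_{\widetilde H}^M}$ factors through $2^{\N}/\EZ$ and is injective on $\EZ$-classes, and the family $\{\mathrm{Sat}(\eta)\}$ is pairwise disjoint across distinct $\EZ$-classes.

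The heart of the argument --- and the step I expect to be the main obstacle --- is to upgrade this abstract injectivity to a topological statement: that, thanks to centrality and the type-definability of $A_H^0$, $A_H$ and $\overline H$, the saturations $\mathrm{Sat}(\eta)$ can be taken to be closed subsets of $\overline H_M$ and the map $\eta\mapsto\mathrm{Sat}(\eta)$ is continuous for the sub-Vietoris topology. This is exactly where the hypotheses are indispensable: without the trivial-action assumption the coset space $\overline H/\widetilde H$ need not be governed by a type-definable group and the saturations need not be closed (cf.\ Example~\ref{ex:simon}), so one cannot expect continuity. Concretely, I would use the definable right action of the type-definable group $A_H$ on $\overline H$ --- which is transitive on $\overline H/\widetilde H$ with point-stabiliser $A_H^0$ --- to express each saturation through the central type-definable quotient $A_H/A_H^0$, and combine this with the sub-Vietoris continuity of $\phi$ and the compactness of $\overline H_M$; I anticipate that the two auxiliary results promised before the theorem supply precisely (i) the closedness of the relevant saturations and (ii) the continuity of the saturation map. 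The circularity to be avoided is that the quotient map $\overline H\to\overline H/\widetilde H$ is not itself known to be type-definable --- that is what we are trying to prove --- so continuity must be extracted from the central type-definable structure of $A_H/A_H^0$, not from $\widetilde H$ directly.

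Finally I would close by a density observation. Since the saturations attached to distinct $\EZ$-classes are pairwise disjoint closed sets in the compact, hence normal, Hausdorff space $\overline H_M$, Proposition~\ref{prop:taut2} shows that the range of the saturation map is Hausdorff in the sub-Vietoris topology. Thus $\eta\mapsto\mathrm{Sat}(\eta)$ is a continuous map from $2^{\N}$ into a Hausdorff space which is constant on each $\EZ$-class and takes distinct values on distinct $\EZ$-classes. But every $\EZ$-class is dense in $2^{\N}$, and a continuous map into a Hausdorff space that is constant on a dense set is globally constant; hence the map is constant, contradicting the fact that it separates at least two $\EZ$-classes. Therefore $\widetilde H$ must be type-definable.
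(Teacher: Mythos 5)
Your overall architecture is the same as the paper's: argue by contraposition, invoke Corollary~\ref{cor:mainGu} for $K=\overline H$, transport the resulting family of sets into the compact Hausdorff group $A_H/(\wt H\cap A)$ via the identification $\overline H/\wt H\cong A_H/(\wt H\cap A)$, and derive a contradiction from Hausdorffness (your density argument for why a continuous $\EZ$-invariant, class-separating map into a Hausdorff space cannot exist is a perfectly good, slightly more self-contained substitute for the paper's appeal to non-smoothness of $\EZ$). The algebraic identification $\overline H=A_H\cdot\wt H$ and the role of the trivial-action hypothesis are also correctly diagnosed.

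However, the step you yourself flag as ``the main obstacle'' is a genuine gap, and it is where all the work in the paper's proof lives. What is needed is a \emph{continuous} map $f\colon S_{\overline H}(M)\to A_H/(\wt H\cap A)$ (logic topology on the target) with $p\mathrel{E_{\wt H}^M}q\iff f(p)=f(q)$; you only ``anticipate'' that such a map exists. Two concrete ingredients are missing. First, the well-definedness and injectivity of the coset map $(a,g)\wt H\mapsto a-a_g+(\wt H\cap A)$ (with $(a_g,g)\in\wt H$) is not automatic from the second isomorphism theorem in the form you need it: one must verify by a cocycle computation that $(a_1,g_1)(a_2,g_2)^{-1}\in\wt H$ is equivalent to $(a_1-a_{g_1})-(a_2-a_{g_2})\in\wt H\cap A$, and it is exactly here that the trivial action of $H$ on $A_H/(\wt H\cap A)$ is used to cancel the twist by $g_1g_2^{-1}$ (Lemma~\ref{lem:pmap}). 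Second, and more seriously, continuity of $f$ requires choosing the representatives $a_g$ \emph{type-definably}: if $a_g$ is merely required to satisfy $(a_g,g)\in\wt H$, then $\wt H$ is only $F_\sigma$ and the preimage of a closed coset of $\wt H\cap A$ is only a countable union of closed sets, so continuity fails. The paper fixes this by writing $\wt H=\bigcup_i D_i$ with $D_i$ type-definable, $D_i^2\subseteq D_{i+1}$, and invoking Corollary~\ref{cor:sumt} (a consequence of Newelski's theorem) to conclude from $\pi[\wt H]=H$ that a \emph{single} piece $D_n$ already projects onto all of $H$; then $a_g$ can be taken with $(a_g,g)\in D_n$, and the preimage of a closed set becomes type-definable. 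This ``one piece suffices'' step is the one non-formal idea in the argument, it is precisely where the hypothesis $H=\pi[\wt H]$ is consumed, and it does not appear anywhere in your proposal. Without it (or some replacement), the continuity of your saturation map $\eta\mapsto[\phi(\eta)]_{E_{\wt H}^M}$ in the sub-Vietoris topology cannot be established, and the final density argument has nothing to apply to.
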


Until the end of the proof of Theorem~\ref{thm:mainext}, we are working the in the context (and with the notation) of this theorem.

\begin{lem}[generalization of a part of the proof of {\autocite[Proposition 2.14]{GK13}}]
\label{lem:pmap}$\,$\\
Given an assignment $g\mapsto a_g$ such that for $g\in H$ we have $(a_g,g)\in \wt H$, the formula $\Phi((a,g)\cdot \wt H)=a-a_g+(\wt H\cap A)$ yields a well-defined, injective function $\overline H/\wt H\to A_H/(\wt H\cap A)$, which does not depend on the choice of $a_g$.
\end{lem}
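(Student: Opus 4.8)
The plan is to recognise $\Phi$ as essentially the second-isomorphism-theorem map, the point being that, because of the cocycle relations, right-multiplying $(a,g)$ by $(a_g,g)^{-1}$ washes out the twisting and leaves the ``honest'' difference $(a-a_g,e)$ lying inside $A$. Once this is established, well-definedness, independence of the choice of $a_g$, and injectivity all reduce to elementary arithmetic in the abelian group $A$ modulo the subgroup $\wt H\cap A$.

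The first and essentially only computational step is to evaluate, for $(a,g)\in\overline H$ and the chosen $(a_g,g)\in\wt H$, the product $(a,g)\cdot(a_g,g)^{-1}$. Using the inverse formula $(a_g,g)^{-1}=(-g^{-1}\cdot a_g-h(g^{-1},g),g^{-1})$ together with the multiplication rule \eqref{eq:mext}, I get
\[
(a,g)\cdot(a_g,g)^{-1}=\bigl(a-a_g+h(g,g^{-1})-g\cdot h(g^{-1},g),\ e\bigr).
\]
Specialising the cocycle equation to $(g_1,g_2,g_3)=(g,g^{-1},g)$ and using $h(e,g)=h(g,e)=e$ yields $h(g,g^{-1})=g\cdot h(g^{-1},g)$, so the correction term cancels and the product is exactly $(a-a_g,e)$. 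Since $(a,g)\in\overline H$ and $(a_g,g)\in\wt H\subseteq\overline H$, this product lies in $\overline H\cap A=A_H$; hence $a-a_g\in A_H$ and the target group $A_H/(\wt H\cap A)$ is the correct one. Moreover $(a_g,g)^{-1}\in\wt H$, so $(a-a_g,e)\,\wt H=(a,g)\,\wt H$, i.e.\ $(a-a_g,e)$ is an $A_H$-representative of the coset $(a,g)\wt H$.

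With this reduction the remaining assertions are routine. For independence of $a_g$: two admissible choices satisfy $(a_g,g)(a_g',g)^{-1}=(a_g-a_g',e)\in\wt H\cap A$ by the same computation, so $a-a_g$ and $a-a_g'$ agree modulo $\wt H\cap A$. For well-definedness: if $(a,g)\wt H=(b,g')\wt H$ then the two $A_H$-representatives $(a-a_g,e)$ and $(b-a_{g'},e)$ lie in a single $\wt H$-coset, and since both are in $A$ their ratio $(a-a_g,e)^{-1}(b-a_{g'},e)=(b-a_{g'}-(a-a_g),e)$ lies in $\wt H\cap A$, giving $a-a_g\equiv b-a_{g'}$. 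Injectivity is precisely the converse implication: from $a-a_g\equiv b-a_{g'}$ one reads off $(a-a_g,e)\wt H=(b-a_{g'},e)\wt H$, hence $(a,g)\wt H=(b,g')\wt H$. (The same bookkeeping, together with $\overline H=A_H\cdot\wt H$ coming from the standing hypothesis $H=\pi[\wt H]$, shows that $\Phi$ is in fact the isomorphism $\overline H/\wt H\cong A_H/(A_H\cap\wt H)$, but only the weaker assertions are claimed.)

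The only genuine obstacle I anticipate is the cocycle bookkeeping in the displayed product: one must track the terms $h(g,g^{-1})$ and $g\cdot h(g^{-1},g)$ produced by the twisted multiplication and verify, via the cocycle identity together with the normalisation $h(e,g)=h(g,e)=e$, that they cancel. Everything after this cancellation is formal manipulation of cosets in an abelian group, so I expect no further difficulty.
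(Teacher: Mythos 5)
Your proof is correct, and it takes a genuinely different route from the paper's. The paper works with the two products $(a_1,g_1)(a_2,g_2)^{-1}$ and $(a_{g_1},g_1)(a_{g_2},g_2)^{-1}$, left-multiplies one by the inverse of the other, and arrives at the condition $(g_1g_2^{-1})^{-1}\cdot\left((a_1-a_{g_1})-g_1g_2^{-1}\cdot(a_2-a_{g_2})\right)\in \wt H\cap A$; to strip off the residual action of $g_1g_2^{-1}$ it then invokes the standing hypothesis of Theorem~\ref{thm:mainext} that $H$ acts trivially on $A_H/(\wt H\cap A)$. You instead form the single product $(a,g)(a_g,g)^{-1}$, where the cocycle identity $h(g,g^{-1})=g\cdot h(g^{-1},g)$ cancels the twisting outright and leaves the untwisted representative $(a-a_g,e)\in A_H$ of the coset $(a,g)\wt H$; after that, well-definedness, independence of $a_g$, and injectivity are indeed pure coset arithmetic inside the abelian subgroup $A$. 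The notable payoff of your version is that the trivial-action hypothesis is never used in the proof of this lemma (it is what lets the paper cancel $g_1g_2^{-1}$, and your arrangement of the computation makes that cancellation unnecessary), so your argument is both shorter and slightly more general. One small point worth making explicit: you pass freely between ``$(a-a_g,e)$ and $(b-a_{g'},e)$ lie in the same coset of $\wt H$'' and membership of their difference in $\wt H$, and you identify $(a,g)\wt H$ with the coset tested by $x^{-1}y\in\wt H$ while the paper's computation tests $xy^{-1}\in\wt H$; this is harmless only because $\wt H\unlhd\wt G$ is part of the standing hypotheses, and it would be worth saying so.
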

\begin{proof}
Consider any $(a_1,g_1),(a_2,g_2)\in \overline H$.

Since $(a_{g_1},g_1),(a_{g_2},g_2)\in \wt H$, we have
\begin{multline}
\label{eq:map}
(a_{g_1}-g_1g_2^{-1}\cdot a_{g_2}-g_1\cdot h(g_2^{-1},g_2)+h(g_1,g_2^{-1}),g_1g_2^{-1})=\\
=(a_{g_1},g_1)(-g_2^{-1}a_{g_2}-h(g_2^{-1},g_2),g_2^{-1})=(a_{g_1},g_1)(a_{g_2},g_2)^{-1}\in \wt H.
\end{multline}
We also have, by simple calculation (for arbitrary $g\in G$ and $a,a'\in A$)
\begin{equation}
\label{eq:remc}
(a,g)^{-1}(a',g)=(0,g)^{-1}(-a,e)(a',e)(0,g)=(g^{-1}\cdot(a'-a),e).
\end{equation}

We want to show that $(a_1,g_1)(a_2,g_2)^{-1}\in \wt H$ if and only if $(a_1-a_{g_1})-(a_2-a_{g_2})\in \wt H\cap A$. The first condition says that
\[
(a_1-g_1g_2^{-1}\cdot a_2-g_1\cdot h(g_2^{-1},g_2)+h(g_1,g_2^{-1}),g_1g_2^{-1})=(a_1,g_1)(a_2,g_2)^{-1}\in \wt H.
\]
Multiplying it on the left by the inverse of the LHS of \eqref{eq:map}, and applying \eqref{eq:remc}, we infer that it is equivalent to
\[
(g_1g_2^{-1})^{-1}\left((a_1-a_{g_1})-g_1g_2^{-1}\cdot(a_2-a_{g_2})\right)\in \wt H\cap A.
\]
On the other hand, since the action of $H$ on the cosets of $\wt H\cap A$ is trivial, and $g_1,g_2\in H$, we can cancel both $g_1g_2^{-1}$, and in conclusion the first condition is equivalent to
\[
(a_1-a_{g_1})-(a_2-a_{g_2})\in \wt H\cap A,
\]
which is just the second condition. Independence of the choice of $a_g$ easily follows: for any $a_1,a_2$ such that $(a_1,g),(a_2,g)\in \wt H$ we have, by the above, 
\[
a_1-a_2=(a_1-a_g)-(a_2-a_g)\in \wt H\cap A. \qedhere
\]
\end{proof}

\begin{prop}
Let $M$ be a small model. There exists a continuous function $f\colon S_{\overline H}(M)\to A_H/(A\cap \wt H)$ such that for $p,q\in S_{\overline H}(M)$ we have $p\Er_{\wt H}^M q\iff f(p)=f(q)$.
\end{prop}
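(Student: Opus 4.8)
The plan is to define $f$ as the map induced by the injection $\Phi$ from Lemma~\ref{lem:pmap}, check that it is a well-defined reduction, and then prove continuity, which is where the real content lies. Fix a section $g\mapsto a_g$ as in Lemma~\ref{lem:pmap}, and for $p=\tp((a,g)/M)\in S_{\overline H}(M)$ put $f(p):=\Phi\big((a,g)\wt H\big)=(a-a_g)+(\wt H\cap A)$. First I would note that $f$ is well defined: since $\wt H$ is invariant, normal and of bounded index, Corollary~\ref{cor:satinvg} shows that every coset of $\wt H$ is $M$-invariant, so $p$ determines the coset $(a,g)\wt H$, and by Lemma~\ref{lem:pmap} the value $\Phi((a,g)\wt H)$ depends only on that coset (in particular not on the choice of $a_g$).

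The reduction property is then immediate: by Proposition~\ref{prop:eqcmp}, $p\Er_{\wt H}^M q$ holds iff realisations of $p$ and $q$ lie in a common coset of $\wt H$, and since $\Phi$ is injective this is equivalent to $f(p)=f(q)$. Before attacking continuity I would record, by a direct computation with \eqref{eq:mext} and \eqref{eq:remc} (and using that $(\gamma,e)^{-1}=(-\gamma,e)$), the coset-free description: for $\gamma\in A_H$ one has $f(\tp((a,g)/M))=\gamma+(A\cap\wt H)$ iff $(a-\gamma,g)=(\gamma,e)^{-1}(a,g)\in\wt H$. This eliminates the non-canonical section $a_g$ and exhibits each fibre of $f$ as the image in $S_{\overline H}(M)$ of a single coset of $\wt H$.

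For continuity, note that the codomain $A_H/(A\cap\wt H)$ is a quotient of the type-definable group $A_H$ by the type-definable, bounded-index subgroup $A\cap\wt H$ (type-definability of $\wt H\cap A$ is one of the hypotheses), so with the logic topology it is compact and Hausdorff; as $S_{\overline H}(M)$ is compact, it suffices by the closed-graph theorem to show that the graph of $f$ is closed. Pulling the graph back through the canonical continuous closed surjections $S_{\overline H\times A_H}(M)\to S_{\overline H}(M)\times S_{A_H}(M)\to S_{\overline H}(M)\times\big(A_H/(A\cap\wt H)\big)$ and applying the coset-free description, closedness of the graph is seen to be equivalent to type-definability over $M$ of $\mu^{-1}[\wt H]$, where $\mu((a,g),\gamma)=(\gamma,e)^{-1}(a,g)$ is definable and onto $\overline H$; this in turn is equivalent to type-definability of $\wt H$ itself. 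This is exactly the point where the problem is genuinely hard: $\wt H$ is only assumed $F_\sigma$, so a priori the graph is merely $F_\sigma$, and establishing its closedness is tantamount to the conclusion of the whole theorem.

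I therefore expect this last step to be the crux, and it is where the remaining hypotheses (triviality of the $H$-action on $A_H/(\wt H\cap A)$, already used in Lemma~\ref{lem:pmap}) and Corollary~\ref{cor:mainGu} must enter. Concretely, I would argue by contradiction: were $\wt H$ not type-definable, Corollary~\ref{cor:mainGu} (applied with $K=\overline H$) would furnish a sub-Vietoris homeomorphic embedding $\phi\colon 2^\N\to\mathcal P(K_M)$ whose values have $E_{\wt H}^M$-saturations equal along each $\EZ$-class and pairwise disjoint across distinct $\EZ$-classes. Feeding these through the reduction $f$, whose values lie in the Hausdorff space $A_H/(A\cap\wt H)$ (Proposition~\ref{prop:taut2}), would produce a map that is constant on each $\EZ$-class but separates distinct $\EZ$-classes; since the $\EZ$-classes are dense in $2^\N$, no such map into a Hausdorff space can exist. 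Hence $\wt H$ is type-definable, so by Lemma~\ref{lem:compgrp} and Corollary~\ref{cor:bcomp} every coset of $\wt H$ is type-definable over $M$, the graph of $f$ is closed, and $f$ is continuous.
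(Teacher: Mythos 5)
Your setup---defining $f$ via $\Phi$ from Lemma~\ref{lem:pmap}, checking well-definedness on types via Corollary~\ref{cor:satinvg}, and getting the reduction property from injectivity of $\Phi$---is fine and matches the paper, and your ``coset-free'' description of the fibres is correct. You also rightly identify continuity as the crux and rightly observe that closedness of the graph is equivalent to type-definability of $\wt H$. But your resolution of that crux is circular. To rule out the case that $\wt H$ is not type-definable, you push the embedding $\phi$ from Corollary~\ref{cor:mainGu} forward through $f$ and conclude that no map with the resulting properties ``into a Hausdorff space can exist''. That conclusion is only available for a \emph{continuous} map (a discontinuous map constant on $\EZ$-classes and separating distinct classes exists trivially, e.g.\ the quotient map), and continuity of the induced map $f''$ on the sub-Vietoris hyperspace requires exactly the continuity of $f$ you are trying to establish: $(f'')^{-1}$ of a subbasic open set $\{B \mid B\cap K=\emptyset\}$ is $\{F \mid F\cap f^{-1}[K]=\emptyset\}$, which is sub-Vietoris open only when $f^{-1}[K]$ is closed. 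In effect you have transplanted the paper's proof of Theorem~\ref{thm:mainext}---which \emph{uses} this proposition---into the proof of the proposition itself.

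The idea you are missing is how to choose the section $g\mapsto a_g$. Write $\wt H=\bigcup_i D_i$ with $D_i$ type-definable, symmetric, containing $e$ and satisfying $D_i^2\subseteq D_{i+1}$, and set $C_i=\pi[D_i]$. These are type-definable, symmetric, contain $e$, satisfy $C_i^2\subseteq C_{i+1}$, and their union is $H=\pi[\wt H]$, which is type-definable; hence Corollary~\ref{cor:sumt} yields a single $n$ with $H=C_n$. Choosing $a_g$ with $(a_g,g)\in D_n$ (rather than merely $(a_g,g)\in\wt H$) turns the preimage of a basic closed set $\Phi(x,M)+(\wt H\cap A)$ into
$\{\tp(a,g/M)\mid \exists a'\,((a',g)\in D_n\land a-a'\in \Phi(x,M)+(\wt H\cap A))\}$,
the projection of a type-definable set and hence closed (using that $\wt H\cap A$ is type-definable). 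This gives continuity directly, with no appeal to Corollary~\ref{cor:mainGu}.
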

\begin{proof}
$\wt H$ is $F_\sigma$, so we can write it as a union $\bigcup_i D_i$ of type-definable subsets of $\overline H$. Without loss of generality, we can assume that $D_i$ are all symmetric, contain $e$ and satisfy $D_i^2\subseteq D_{i+1}$. Now, consider $C_i:=\{g\in H\mid \exists a\,(a,g)\in D_i\}$. 
Then $C_i$ are also type-definable, symmetric, contain $e$ and satisfy $C_i^2\subseteq C_{i+1}$, and since $\pi[\wt H]=H$, there is some $n$ such that $H=C_n$ (by Corollary~\ref{cor:sumt}).

Consider the map $f_1\colon \overline H\to A_H/(A\cap \wt H)$ defined as $f_1(a,g)=a-a_g+(A\cap \wt H)$, where $a_g$ is such that $(a_g,g)\in D_n$ (which exists because $C_n=H$). This is well-defined and does not depend on the choice of $a_g$ by Lemma~\ref{lem:pmap}, which also says that $f_1(a,g)=f_1(a',g')$ if and only if $(a,g)\Er_{\wt H} (a',g')$.

Furthermore, since cosets of $A\cap \wt H$ are $M$-invariant (by Corollary~\ref{cor:satinvg}, because $A\cap \wt H$ is invariant of bounded index), this map factors through $S_{\overline H}(M)$, yielding a map $f\colon  S_{\overline H}(M)\to A_H/(\wt H\cap A)$ which has the desired properties:
\begin{itemize}
\item
$p\Er_{\wt H}^M q\iff f(p)=f(q)$, because $f_1(a,g)=f_1(a',g')\iff (a,g)\Er_{\wt H} (a',g')$,
\item
$f$ is continuous: a basic closed set in $A_H/(\wt H\cap A)$ is the quotient of an $M$-invariant pseudo-closed set, so it is of the form $\Phi(x,M)+(\wt H\cap A)$ for some type $\Phi$. But $f^{-1}[\Phi(x,M)+(A\cap \wt H)]$ is just 
\[
\{\tp(a,g/M)\mid \exists a'\,((a',g)\in D_n\land a-a'\in \Phi(x,M)+(\wt H\cap A)) \},
\]
which is clearly a closed set in $S_{\overline H}(M)$ (owing to the fact that $A\cap \wt H$ is type-definable). \qedhere
\end{itemize}
\end{proof}

Having proved the above results, we have all but finished the proof of the main theorem.
\begin{proof}[Proof of Theorem~\ref{thm:mainext}]
Suppose for a contradiction that $\wt H$ is not type-definable. Then let us choose a small model $M$ as in Corollary~\ref{cor:mainGu} for $H=\wt H$ and $K=\overline H$, so that we have a homeomorphic embedding 
$\phi\colon 2^\N\to \mathcal P(S_{\overline{H}}(M))$ (the latter with sub-Vietoris topology) such that if $\eta \EZ \eta'$,
then the saturations $[\phi(\eta)]_{E_{\wt H}^M},[\phi(\eta)]_{E_{\wt H}^M}$ are equal, and otherwise they are disjoint.

The continuous map $f$ from the previous proposition induces a continuous (in sub-Vietoris topology) mapping 
$f''\colon \mathcal P(S_{\overline{H}}(M))\to \mathcal P(A_H/(A\cap \wt H))$, and it is easy to see that $f''\circ \phi$ maps $\EZ$-related points to the same sets, while it maps unrelated points to disjoint closed subsets (because the value of $f$ depends only on the 
$E_{\wt H}^M$-class of the argument and any two points from different $E_{\wt H}^M$-classes are mapped by $f$ to distinct elements).

Consequently, the range of $f''\circ \phi$ is a compact Polish space: this is because it is Hausdorff by the compactness of $A_H/(A\cap \wt H)$ and Proposition~\ref{prop:taut2}, and the image by a continuous function of a compact Polish space in a Hausdorff space is Polish (as one can easily show that such an image is compact and second-countable). 
But then $f''\circ \phi$ is a continuous reduction of $\EZ$ to equality on $\rng(f''\circ\phi)$, which is a contradiction.
\end{proof}

\subsection{Application of the main theorem in group extensions}

In \autocite{GK13}, the authors state the following two equivalent conjectures, which we will prove using Theorem~\ref{thm:mainext}.

\begin{con}[{\autocite[Conjecture 2.11]{GK13}}]
\label{con:gk211}
Suppose we have a definable extension of a definable group $G$ by a definable abelian group $A$, corresponding to the short exact sequence
\[
0\to A\to \wt G\to G\to 0
\]
with the $2$-cocycle $h\colon G^2\to A$. Assume that $h$ has finite range contained in $dcl(\emptyset)$ and that $G^{00}=G^{000}$.

Then, the conjecture is that for any invariant $\wt H\leq\wt {G}$ of bounded index, and such that $\wt H\cap A$ is type-definable, we have
\[
\wt{G}^{00}\cap A\subseteq \wt H\cap A.
\]
\end{con}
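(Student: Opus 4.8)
The plan is to deduce the conjecture from Theorem~\ref{thm:mainext}. I will feed the theorem a carefully chosen normal, $F_\sigma$ subgroup $\wt H'\unlhd\wt G$ with $\wt H'\cap A$ type-definable and $\pi[\wt H']=G^{00}$; once the theorem certifies that $\wt H'$ is type-definable, the minimality of $\wt G^{00}$ among type-definable subgroups of bounded index forces $\wt G^{00}\subseteq\wt H'$, and intersecting with $A$ yields $\wt G^{00}\cap A\subseteq\wt H'\cap A\subseteq\wt H\cap A$. The whole difficulty lies in arranging the hypotheses of Theorem~\ref{thm:mainext}, and above all the triviality of the action of $H$ on $A_H/(\wt H'\cap A)$; this is exactly where the assumption $G^{00}=G^{000}$ will be used.

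To build $\wt H'$, first recall that $\wt H$, being invariant of bounded index, contains $\wt G^{000}$, and that $\pi$ is a definable surjection, so $\pi[\wt G^{000}]=G^{000}=G^{00}$. I would set
\[
B:=\bigcap_{g\in\wt G} g(\wt H\cap A)g^{-1}, \qquad \overline H:=\pi^{-1}[G^{00}], \qquad \wt H':=B\cdot\wt G^{000}.
\]
Since $\wt H$ has bounded index there are only boundedly many conjugates $g\wt Hg^{-1}$, hence only boundedly many sets $g(\wt H\cap A)g^{-1}=g\wt Hg^{-1}\cap A$, so $B$ is a small intersection of type-definable sets and is itself type-definable; it is normal by construction. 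Then $\overline H$ is a type-definable group of bounded index containing $A$, and (as $\wt G^{000}$ is normal and $B\subseteq A=\ker\pi$) the set $\wt H'$ is an invariant normal subgroup of bounded index with $\pi[\wt H']=\pi[\wt G^{000}]=G^{00}=\pi[\overline H]$, so $\wt H'\subseteq\overline H$ and $A_H:=\overline H\cap A=A$. It is $F_\sigma$, being generated by the type-definable set $B$ together with a type-definable generator of the ($F_\sigma$) group $\wt G^{000}$. Finally $\wt G^{000}\cap A$ is normal in $\wt G$ and contained in $\wt H\cap A$, hence in every conjugate $g(\wt H\cap A)g^{-1}$, so $\wt G^{000}\cap A\subseteq B$ and therefore $\wt H'\cap A=B\cdot(\wt G^{000}\cap A)=B$ is type-definable. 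This secures all the structural hypotheses of Theorem~\ref{thm:mainext}.

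The crux is the action condition: I must check that $H=G^{00}$ acts trivially on $A_H/(\wt H'\cap A)=A/B$. This is where $G^{00}=G^{000}$ does the work. Any $\bar g\in G^{00}=G^{000}$ lifts to some $g\in\wt G^{000}$ with $\pi(g)=\bar g$, and then for $a\in A$ the commutator $[g,a]=gag^{-1}a^{-1}$ lies in $\wt G^{000}$ (normality of $\wt G^{000}$) and in $A$ (normality and commutativity of $A$), so $\bar g\cdot a-a=[g,a]\in\wt G^{000}\cap A\subseteq B$; thus $\bar g$ acts trivially on $A/B$. With this, Theorem~\ref{thm:mainext} applies to $\wt H'$ (with $\overline H$, $H=G^{00}$, $A_H=A$) and yields that $\wt H'$ is type-definable. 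As a type-definable subgroup of bounded index it contains $\wt G^{00}$, and intersecting with $A$ gives $\wt G^{00}\cap A\subseteq\wt H'\cap A=B\subseteq\wt H\cap A$, which is the assertion of the conjecture.

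The main obstacle, then, is verifying the action-triviality hypothesis — the one genuinely nontrivial assumption of Theorem~\ref{thm:mainext} — and the construction $\wt H'=B\cdot\wt G^{000}$ is engineered precisely so that it holds. Replacing $\wt H\cap A$ by its $\wt G$-core $B$ repairs the possible non-normality of $\wt H$; building $\wt H'$ from the type-definable $B$ and the $F_\sigma$ group $\wt G^{000}$ repairs the fact that $\wt H$ itself need not be $F_\sigma$; and the inclusion $\wt G^{000}\subseteq\wt H$ (which yields $\wt G^{000}\cap A\subseteq B$) together with $G^{00}=G^{000}$ delivers the triviality of the action. It is worth noting that this argument appears to use only $G^{00}=G^{000}$ and the type-definability of $\wt H\cap A$; the hypotheses on the cocycle $h$ seem to play no role in this particular direction.
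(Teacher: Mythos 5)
Your proof is correct and follows essentially the same route as the paper: both arguments manufacture from $\wt H$ a normal, $F_\sigma$, bounded-index subgroup with type-definable intersection with $A$ (via a core construction and multiplication by $\wt G^{000}$), feed it to Theorem~\ref{thm:mainext}, and conclude by minimality of $\wt G^{00}$. The paper takes $\Core(\wt H)\cap\wt G^{00}$ and then $(\wt H\cap A)\cdot\wt G^{000}$ with ambient group $\overline H=\wt G^{00}$, whereas you take the core of $\wt H\cap A$ directly and use $\overline H=\pi^{-1}[G^{00}]$; these are interchangeable. The one genuinely different ingredient is your verification of the action-triviality hypothesis: the paper invokes the fact that $G^{00}$ acts trivially on $A/A_1$ for any type-definable, bounded-index, $G$-invariant $A_1$ (via the type-definable bounded-index kernel $\{g\mid\forall a,\ g\cdot a-a\in A_1\}$), while you lift $\bar g\in G^{00}=G^{000}$ into $\wt G^{000}$ and observe that the commutator $[g,a]$ lands in $\wt G^{000}\cap A\subseteq B$. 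Your argument is more elementary and does not need $B$ to be type-definable for this step, but it uses $G^{00}=G^{000}$ at this point, whereas the paper's fact only needs type-definability of the subgroup; in the present setting both hypotheses are available, so the two verifications are equally effective. Your closing remark that the finiteness assumptions on $h$ are not needed matches the parenthetical in the paper's Corollary~\ref{cor:conpr}.
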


\begin{con}[{\autocite[Conjecture 2.10]{GK13}}]
\label{con:gk210}
Assume we have $G,A,h$ as in the first paragraph of the previous conjecture. Additionally, let $A_1$ be a type-definable subgroup of $A$ of bounded index and invariant under the action of $G$.

Then, the conjecture is that
\[
\wt G^{00}\cap A\subseteq A_1 \iff \wt G^{000}\cap A\subseteq A_1.
\]
\end{con}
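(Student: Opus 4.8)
The plan is to establish the non-trivial implication of Conjecture~\ref{con:gk210}: assuming $\wt G^{000}\cap A\subseteq A_1$, I will show $\wt G^{00}\cap A\subseteq A_1$ (the converse is immediate, since $\wt G^{000}\subseteq \wt G^{00}$). The strategy is to exhibit a \emph{type-definable}, bounded-index, normal subgroup $\wt H\unlhd \wt G$ with $\wt H\cap A=A_1$; granting this, the minimality of $\wt G^{00}$ among type-definable bounded-index subgroups gives $\wt G^{00}\subseteq \wt H$, whence $\wt G^{00}\cap A\subseteq \wt H\cap A=A_1$. The natural candidate is $\wt H:=\wt G^{000}\cdot A_1$, and the whole proof reduces to checking that it satisfies the hypotheses of Theorem~\ref{thm:mainext}, which then supplies the type-definability.

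First I would record the elementary properties of $\wt H$. As $A_1$ is $G$-invariant and $A$ is abelian and normal in $\wt G$, one has $A_1\unlhd \wt G$; together with $\wt G^{000}\unlhd \wt G$ this makes $\wt H$ a normal, invariant subgroup of bounded index. It is generated by a countable family of type-definable sets, because $\wt G^{000}$ is $F_\sigma$ (it is generated by the $F_\sigma$ set $\{ab^{-1}\mid a\equiv_L b\}$, since $\equiv_L=\bigcup_n\{d_L\le n\}$; cf.\ \autocite{Gis11}), so $\wt H=\bigcup_n (Z_n\cdot A_1)$ for type-definable $Z_n$ exhausting $\wt G^{000}$. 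Using the hypothesis, $\wt H\cap A=A_1$: if $w a_1\in A$ with $w\in\wt G^{000}$ and $a_1\in A_1$, then $w=(wa_1)a_1^{-1}\in \wt G^{000}\cap A\subseteq A_1$, so $wa_1\in A_1$ (the reverse inclusion being clear). Finally $\pi[\wt H]=\pi[\wt G^{000}]=G^{000}=G^{00}$, the last equality being the standing assumption.

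To invoke Theorem~\ref{thm:mainext} I would take the type-definable overgroup $\overline H:=\pi^{-1}[G^{00}]$, which contains $\wt H$ and satisfies $\overline H\cap A=A$ and $\pi[\overline H]=G^{00}=\pi[\wt H]=:H$. The one substantial hypothesis left is that $H=G^{00}$ acts trivially on $A_H/(\wt H\cap A)=A/A_1$, and this is where the argument really turns. The key observation is that $A_1$ has \emph{bounded} index in $A$, so $A/A_1$ is a bounded $G$-module; for each coset $\bar a$ the stabiliser $\Stab_G(\bar a)=\{g\in G\mid g\cdot a-a\in A_1\}$ is type-definable and of index at most $\lvert A/A_1\rvert$, hence of bounded index, so $\Stab_G(\bar a)\supseteq G^{00}$. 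Thus $G^{00}$ fixes every coset, i.e.\ it acts trivially on $A/A_1$, as required. Theorem~\ref{thm:mainext} then yields that $\wt H$ is type-definable, and the reduction above finishes the proof.

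The main obstacle is precisely the verification of the trivial-action hypothesis of Theorem~\ref{thm:mainext}; everything else is bookkeeping. Its resolution rests on combining the boundedness of $A/A_1$ with the connectedness of $G^{00}$ (it has no proper type-definable bounded-index subgroup), which forces the induced $G^{00}$-action on the bounded quotient to be trivial. I note that this implication uses only $G^{00}=G^{000}$ and the hypotheses on $A_1$, and not the finiteness of $\rng(h)$; the latter, together with torsion-freeness, is what powers the converse direction in Theorem~\ref{thm:twrgk}. Conjecture~\ref{con:gk211} then follows either from its equivalence with Conjecture~\ref{con:gk210} established in \autocite{GK13}, or by a parallel application of Theorem~\ref{thm:mainext} after replacing a given $\wt H$ by $\wt G^{000}\cdot(\wt H\cap A)$.
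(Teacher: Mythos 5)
Your proof is correct and follows essentially the same route as the paper: the paper establishes the equivalent Conjecture~\ref{con:gk211} in Corollary~\ref{cor:conpr} by passing to the subgroup $(\wt H\cap A)\cdot\wt G^{000}$ and feeding it to Theorem~\ref{thm:mainext}, with the trivial action of $G^{00}$ on $A/A_1$ obtained from the $\emptyset$-type-definable, bounded-index kernel of $G\to S(A/A_1)$ --- which is exactly your construction $\wt H=\wt G^{000}\cdot A_1$ and your stabiliser argument, applied directly to Conjecture~\ref{con:gk210} rather than via the stated equivalence of the two conjectures. One small point of hygiene: each individual $\Stab_G(\bar a)$ is only type-definable over the parameter $a$, so a priori it only contains $G^{00}_a$; to invoke minimality of $G^{00}=G^{00}_\emptyset$ you should first intersect the stabilisers into the $\emptyset$-type-definable kernel (as the paper does), which changes nothing of substance.
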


\begin{rem}
The above conjectures are important mostly for two reasons:
\begin{enumerate}
\item
They imply that Corollary 2.8 in \autocite{GK13} holds in general (i.e.\ also when the language is uncountable), that is: in Theorem~\ref{thm:twrgk}, if $G^{00}_B=G^{000}_B$ and $A_1^* \subseteq \wt{G}^{000}_B \cap A$, then the assumption (i) (about the non-splitting of the modified $2$-cocycle) not only implies (under the assumption (ii)), but is also necessary for $\wt G^{00}_B\neq \wt G^{000}_B$. This is explained in detail in \autocite{GK13}.
\item
They imply that that, in a rather general context, the quotient $\wt G^{00}/\wt G^{000}$ is (algebraically) isomorphic  to the quotient of a compact group by a finitely generated dense subgroup (this will be revisited at the end of this section).
\end{enumerate}
\end{rem}

\begin{rem}
The authors of \autocite{GK13} actually allow a finite parameter set $B$ over which the cocycle $h$ is definable, they calculate the connected components over this set, and they assume that $\wt H$ is $B$-invariant in Conjecture~\ref{con:gk211} and $A_1$ is type-definable over $B$ in Conjecture~\ref{con:gk210}. But we may add constants for elements of $B$ to the language, and it changes none of the properties relevant to the previous conjectures, so we assume without loss of generality that $B=\emptyset$.

We will also drop the requirement that $h$ has finite range contained in $dcl(\emptyset)$, as it is not needed for the subsequent discussion, which leaves us in the general context of Theorem~\ref{thm:mainext}, only with the additional assumption that $G^{00}=G^{000}$.
\end{rem}

So, we are working with a definable extensions $\wt G$ of a definable group $G$ by a definable abelian group $A$.
The assumption that $G^{00}=G^{000}$ allows us to make use of the following observation.

\begin{fct}[{\autocite[Remark 2.1(iii)]{GK13}}]
If $A_1$ is a type-definable, bounded index subgroup of $A$, invariant under the action of $G$ (i.e.\ normal in $\wt G$), then $G^{00}$ acts trivially on $A/A_1$.
\end{fct}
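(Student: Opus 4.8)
The plan is to realise the conclusion as the statement that $G^{00}$ is contained in the kernel of the action of $G$ on the bounded quotient $A/A_1$. Concretely, I would set
\[
S=\{g\in G\mid (\forall a\in A)\; g\cdot a-a\in A_1\},
\]
which is exactly the pointwise stabiliser of $A/A_1$, equivalently the kernel of the homomorphism from $G$ into the permutation group of $A/A_1$ induced by the action (this makes sense precisely because $A_1$ is $G$-invariant, so the action descends to $A/A_1$). It is visibly a subgroup of $G$, and the whole point is to verify that it is $\emptyset$-type-definable and of bounded index: once we know this, $S\supseteq G^{00}$ by the very definition of $G^{00}$ as the smallest $\emptyset$-type-definable subgroup of bounded index, which is precisely the desired conclusion.

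For type-definability I would use that $A_1$ is $\emptyset$-type-definable, say $A_1=\bigcap_{j}\psi_j(\mon)$ for formulas $\psi_j$ over $\emptyset$. Since $G$, $A$, the action, and the addition on $A$ are all $\emptyset$-definable, for each $j$ the condition $(\forall a\in A)\,\psi_j(g\cdot a-a)$ is expressed by a single first-order formula $\theta_j(g)$ over $\emptyset$ (the universal quantifier, ranging over the definable set $A$, stays inside a formula). Then $S=G\cap\bigcap_j\theta_j(\mon)$ is $\emptyset$-type-definable. For bounded index, note that $G/S$ acts faithfully on $A/A_1$, so $gS\mapsto(x\mapsto g\cdot x)$ embeds it into the collection of self-maps of $A/A_1$; as $A_1$ has bounded index, $A/A_1$ is bounded, hence so is the number of such self-maps, and therefore $[G:S]$ is bounded.

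The only genuinely delicate points are these two verifications, and in particular the absorption of the quantifier ``$\forall a\in A$'' into first-order formulas, which is what makes $S$ type-definable rather than merely invariant; the bounded-index claim is then routine cardinal arithmetic (using that bounded cardinals are closed under exponentiation in our set-up). I would also remark that one can run the argument one notch weaker: observing only that $S$ is $\emptyset$-invariant of bounded index already gives $S\supseteq G^{000}$, and the standing hypothesis $G^{00}=G^{000}$ upgrades this to $S\supseteq G^{00}$, recovering the statement; this is presumably the sense in which the assumption $G^{00}=G^{000}$ is invoked here. Either way, $G^{00}\subseteq S$ says exactly that $G^{00}$ fixes every coset of $A_1$, i.e.\ acts trivially on $A/A_1$.
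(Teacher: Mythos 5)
Your proof is correct and is essentially the paper's own argument: the paper likewise observes that the kernel $\{g\in G\mid(\forall a)\,g\cdot a-a\in A_1\}$ of the induced homomorphism $G\to S(A/A_1)$ is a type-definable subgroup of bounded index (since $A/A_1$ is small), hence contains $G^{00}$; you merely spell out the type-definability in more detail. Your closing aside is unnecessary: the statement does not require $G^{00}=G^{000}$, and neither your main argument nor the paper's uses it.
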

\begin{proof}
$A_1$ is $G$-invariant, so $G$ acts naturally on $A/A_1$, yielding an abstract homomorphism $f\colon G\to S(A/A_1)$ (where $S(A/A_1)$ is the abstract permutation group). Notice that
\[
\ker(f)=\{ g\in G\mid (\forall a)\,g\cdot a-a\in A_1\} 
\]
is a type-definable subgroup of $G$, and it has bounded index, because $S(A/A_1)$ is small. Therefore, it contains $G^{00}$.
\end{proof}

This leads us to the next corollary.

\begin{cor}
Suppose $\wt H\unlhd \wt G$ is an invariant subgroup of bounded index, contained in $\wt G^{00}$ and $F_\sigma$ (i.e.\ generated by a countable family of type-definable sets), while $G^{00}=G^{000}$.

Then $\wt H$ is type-definable (and therefore equal to $\wt G^{00}$) if and only if $\wt H\cap A$ is type-definable. 
\end{cor}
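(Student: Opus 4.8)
The plan is to derive the nontrivial implication from Theorem~\ref{thm:mainext}, the converse being immediate. Indeed, if $\wt H$ is type-definable then $\wt H\cap A$ is type-definable, being the intersection of the type-definable $\wt H$ with the definable group $A$; moreover a type-definable subgroup of bounded index always contains $\wt G^{00}$, so the hypothesis $\wt H\subseteq \wt G^{00}$ forces $\wt H=\wt G^{00}$. For the forward direction I assume that $\wt H\cap A$ is type-definable and aim to apply Theorem~\ref{thm:mainext} with the type-definable overgroup taken to be $\overline H:=\wt G^{00}$ (legitimate since $\wt H\subseteq \wt G^{00}$ by hypothesis). Writing $A_H=\overline H\cap A=\wt G^{00}\cap A$ and $H=\pi[\overline H]=\pi[\wt G^{00}]$, the hypotheses of that theorem still to be checked are: (a) $H=\pi[\wt H]$, (b) $H$ acts trivially on $A_H/(\wt H\cap A)$, and (c) $\wt H\cap A$ is type-definable. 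Condition (c) is exactly the standing assumption, so only (a) and (b) need work; the $F_\sigma$ and bounded-index hypotheses on $\wt H$ transfer directly.

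For (a) I will use the standard behaviour of connected components under the definable surjection $\pi$, namely $\pi[\wt G^{00}]=G^{00}$ and $\pi[\wt G^{000}]=G^{000}$; the quickest justification is that $\pi[\wt G^{00}]$ is type-definable (the image of a type-definable set under $\pi$, a map which is closed on the relevant type spaces by compactness) of bounded index and hence contains $G^{00}$, while $\pi^{-1}[G^{00}]$ is type-definable of bounded index and hence contains $\wt G^{00}$, giving the reverse inclusion; the invariant version (for $000$) is identical. Now $\wt H$ is invariant of bounded index, so $\pi[\wt H]$ is an invariant subgroup of $G$ of bounded index and therefore contains $G^{000}$; on the other hand $\wt H\subseteq \wt G^{00}$ gives $\pi[\wt H]\subseteq \pi[\wt G^{00}]=G^{00}$. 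This is precisely where the hypothesis $G^{00}=G^{000}$ is used: the two inclusions squeeze $\pi[\wt H]$ between $G^{000}$ and $G^{00}$, which coincide, forcing $\pi[\wt H]=G^{00}=\pi[\wt G^{00}]=H$. I expect this identification to be the main (if modest) obstacle, as it is the only point at which the full strength of $G^{00}=G^{000}$ is genuinely needed.

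For (b) I will invoke the Fact quoted above from \autocite{GK13} with $A_1:=\wt H\cap A$. This requires $A_1$ to be a type-definable, bounded-index, $G$-invariant (equivalently, normal in $\wt G$) subgroup of $A$: type-definability is (c); bounded index follows since the injection $A/(A\cap \wt H)\hookrightarrow \wt G/\wt H$ shows $[A:\wt H\cap A]\le[\wt G:\wt H]$ is bounded; and $\wt H\cap A$ is normal in $\wt G$ as the intersection of the two normal subgroups $\wt H$ and $A=\ker\pi$ (and it is $\Aut$-invariant as the intersection of an invariant set with a definable one). The Fact then yields that $G^{00}$ acts trivially on $A/(\wt H\cap A)$, hence a fortiori on the $G$-invariant subquotient $A_H/(\wt H\cap A)=(\wt G^{00}\cap A)/(\wt H\cap A)$; since $H=G^{00}$ by (a), this is exactly (b).

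With (a), (b) and (c) in hand, Theorem~\ref{thm:mainext} gives that $\wt H$ is type-definable. Finally, being type-definable of bounded index it contains $\wt G^{00}$, and together with $\wt H\subseteq \wt G^{00}$ this yields $\wt H=\wt G^{00}$, which completes the proof.
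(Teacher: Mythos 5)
Your proposal is correct and follows essentially the same route as the paper: the easy direction is immediate, and for the converse the paper likewise squeezes $\pi[\wt H]$ between $G^{000}$ and $G^{00}$ (using $G^{00}=G^{000}$), invokes the quoted Fact with $A_1=\wt H\cap A$ to get the trivial action on $A/(\wt H\cap A)$, and applies Theorem~\ref{thm:mainext} with $\overline H:=\wt G^{00}$. Your write-up just spells out in more detail the verifications (e.g.\ $\pi[\wt G^{00}]=G^{00}$ and the hypotheses of the Fact) that the paper leaves implicit.
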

\begin{proof}
$\Rightarrow$ is clear. 
For $\Leftarrow$ notice that $\pi[\wt H]$ is contained in $G^{00}$, and contains $G^{000}$, so it is, in fact, equal to $G^{00}$, and, by the previous fact, it acts trivially on $A/(\wt H\cap A)$, so the result follows immediately from Theorem~\ref{thm:mainext} with $\overline H:=\wt G^{00}$.
\end{proof}

And finally, we can prove Conjecture~\ref{con:gk211}.
\begin{cor}
\label{cor:conpr}
Suppose $\wt H\leq \wt G$ is an invariant subgroup of bounded index, and that $G^{00}=G^{000}$. Suppose in addition that $\wt H\cap A\cap \wt G^{00}$ is type-definable.
Then $\wt H\cap A\supseteq \wt G^{00}\cap A$. (In particular, Conjecture~\ref{con:gk211} holds, without any assumptions on $h$ beyond definability.)
\end{cor}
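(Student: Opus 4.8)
The plan is to reduce the statement to a claim about a single, well-chosen \emph{normal} $F_\sigma$ subgroup $\wt H'$ of $\wt G$ lying between $\wt G^{000}$ and $\wt G^{00}$, to which Theorem~\ref{thm:mainext} (with $\overline H=\wt G^{00}$), equivalently the preceding corollary, applies. First I would observe that the conclusion $\wt H\cap A\supseteq \wt G^{00}\cap A$ is equivalent to $\wt G^{00}\cap A\subseteq \wt H$, and since $\wt G^{00}\cap A\subseteq A\cap \wt G^{00}$, it is in turn equivalent to
\[
\wt G^{00}\cap A=\wt H\cap A\cap\wt G^{00}=:B.
\]
The inclusion $B\subseteq\wt G^{00}\cap A$ is trivial, so only $\wt G^{00}\cap A\subseteq B$ remains. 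Note also that, being invariant of bounded index, $\wt H$ contains $\wt G^{000}$, whence $\wt G^{000}\cap A\subseteq B$; and $B$ is type-definable by hypothesis.

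Next, to repair the lack of normality of $\wt H$ (and hence of $B$), I pass to the normal core
\[
B':=\bigcap_{g\in G} g*B=\{a\in A\mid \forall g\in G,\ g*a\in B\}.
\]
This $B'$ is type-definable: writing $B$ as a small conjunction of formulas $\psi_i$, each condition ``$\forall g\in G\ \psi_i(g*a)$'' is a single first-order formula in $a$ (a universal quantifier over the definable group $G$ applied to the definable action $*$), so $B'$ is the conjunction of these formulas intersected with $A$. It is manifestly $G$-invariant, hence normal in $\wt G$ (conjugation on $A$ factors through the $G$-action), and satisfies $\wt G^{000}\cap A\subseteq B'\subseteq B$, using that $\wt G^{000}\cap A$ is $G$-invariant and contained in $B$. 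I then set $\wt H':=\wt G^{000}\cdot B'$.

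I would then check that $\wt H'$ meets every hypothesis required. It is a normal subgroup (a product of two normal subgroups), invariant, of bounded index (it contains $\wt G^{000}$), and contained in $\wt G^{00}$ (as $\wt G^{000}\subseteq\wt G^{00}$ and $B'\subseteq\wt G^{00}\cap A$). It is generated by countably many type-definable sets, because $\wt G^{000}$ is (its orbit equivalence relation on a principal homogeneous space is the Lascar strong type, which is $F_\sigma$, so by Proposition~\ref{prop:grres} and Lemma~\ref{lem:compgrp} the group $\wt G^{000}$ is $F_\sigma$) and $B'$ is a single type-definable set. Crucially, since $\wt G^{000}\cap A\subseteq B'$, one computes $\wt H'\cap A=(\wt G^{000}\cap A)\cdot B'=B'$, which is type-definable. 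Finally $\pi[\wt H']=G^{000}=G^{00}$, and by the Fact preceding this corollary $G^{00}$ acts trivially on $A/B'$ (as $B'$ is type-definable, $G$-invariant and of bounded index in $A$), so Theorem~\ref{thm:mainext} yields that $\wt H'$ is type-definable; being type-definable of bounded index and contained in $\wt G^{00}$, it must equal $\wt G^{00}$. Hence $\wt G^{00}\cap A=\wt H'\cap A=B'\subseteq B$, which is exactly the reduced goal.

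The main obstacle is precisely the passage from the merely \emph{invariant} $\wt H$ of the statement to an honestly \emph{normal} subgroup to which the earlier machinery applies, while simultaneously keeping that subgroup's intersection with $A$ type-definable. Both difficulties are resolved by the normal core $B'$: its universal-quantifier description is what guarantees type-definability (avoiding any need to count conjugates), and the inclusion $\wt G^{000}\cap A\subseteq B'$ is what collapses $\wt H'\cap A$ down to the single type-definable group $B'$. A secondary point to get right is the $F_\sigma$-ness of $\wt G^{000}$, which is where the identification of $\wt G^{000}$-orbits with Lascar strong types (Theorem~\ref{thm:phd}) enters.
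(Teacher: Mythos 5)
Your proof is correct and follows essentially the same route as the paper's: both reduce to the preceding corollary by replacing $\wt H$ with a normal, $F_\sigma$ subgroup of the form $(\text{normal core of } \wt H\cap A\cap \wt G^{00})\cdot \wt G^{000}$ sandwiched between $\wt G^{000}$ and $\wt G^{00}$, and then invoking minimality of $\wt G^{00}$. The only cosmetic difference is that you core $B=\wt H\cap A\cap \wt G^{00}$ under the $G$-action directly (with an explicit first-order justification of type-definability via a universal quantifier over the definable group $G$), whereas the paper first passes to $\Core(\wt H)=\bigcap_{g}g\wt Hg^{-1}$ and uses the identity $\Core(\wt H)\cap A\cap \wt G^{00}=\Core(\wt H\cap A\cap \wt G^{00})$ --- the resulting groups coincide.
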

\begin{proof}
We intend to apply the preceding corollary, so we need to modify $\wt H$ to satisfy its assumptions.
\begin{enumerate}
\item
We can assume without loss of generality that $\wt H$ is normal, because we can replace it with $\Core(\wt H):=\bigcap_{g\in \wt G} g\wt Hg^{-1}$: the latter is obviously normal and invariant, it has bounded index, because it contains 
${\wt G}^{000}$, and finally, since $A,\wt G^{00}$ are normal, we have the identity
\[
\Core(\wt H) \cap A\cap \wt G^{00}=\Core(\wt H \cap A \cap \wt G^{00}),
\]
so in particular, the left hand side is type-definable, and hence $\Core(\wt H)$ satisfies the assumptions of Corollary~\ref{cor:conpr} and it is contained in $\widetilde H$.
\item
Secondly, can assume without loss of generality that $\wt H\leq \wt G^{00}$, by replacing $\wt H$ with $\wt H\cap \wt G^{00}$.
\item
Thirdly, we can also assume that $\wt H$ is generated by a countable family of type-definable sets by replacing it with $(\wt H\cap A)\cdot \wt G^{000}$. 
This is a normal subgroup, because $\wt G^{000}$ is normal, and it is generated by $\wt H\cap A$ and a type-definable set generating $\wt G^{000}$ (which exists by e.g.\ \autocite[Proposition 3.4]{GN08}). Moreover, $\wt G^{000}\leq \wt H$, so $(\wt H\cap A)\cdot \wt G^{000}\leq \wt H$.
\item
Then we can apply the previous corollary to deduce that ${\wt H}=\wt G^{00}$, so trivially ${\wt H}\cap A\supseteq \wt G^{00}\cap A$.\qedhere
\end{enumerate}
\end{proof}

As was suggested in \autocite[Remark 2.16]{GK13}, Conjecture~\ref{con:gk211} implies the next corollary (formulated in a slightly more general form here). By the discussion right after \autocite[Proposition 2.14]{GK13}, this implies that whenever we are in the context of Theorem~\ref{thm:twrgk}, and additionally $G^{000}_B=G^{00}_B$ and $A_1 \subseteq {\wt G}^{000}_B \cap A$, then ${\wt G}^{00}_B/{\wt G}^{000}_B$ is  (abstractly) isomorphic to the quotient of a compact abelian group by a dense finitely generated subgroup -- which is analogous to the fact from \autocite{CP12}) that for $G$ definable in o-minimal expansions of a real closed field, $G^{00}/G^{000}$ is (abstractly) isomorphic to the quotient of a compact, abelian Lie group by a dense, finitely generated subgroup. This furthers the analogy between \autocite{GK13} and \autocite{CP12}.

\begin{cor}
Suppose that $\wt G$ is a definable extension of a definable group $G$ by an abelian group $A$, and that $G^{00}=G^{000}$.

Let $A_1\leq \wt G^{000}\cap A$ be a $G$-invariant, type-definable subgroup of $A$ of bounded index. 
Then $\left( \wt G^{000}\cap A\right)/A_1$ is dense in $\left( \wt G^{00}\cap A\right)/A_1$ (with the logic topology).
\end{cor}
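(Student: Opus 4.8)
The goal is to show that $\left(\wt G^{000}\cap A\right)/A_1$ is dense in $\left(\wt G^{00}\cap A\right)/A_1$ in the logic topology. Since $A_1$ is type-definable, $G$-invariant and of bounded index, while $\wt G^{00}\cap A$ and $\wt G^{000}\cap A$ are both bounded-index subgroups of $A$ (with $A_1\leq \wt G^{000}\cap A$), the quotients live inside the compact Hausdorff space $A/A_1$ with its logic topology. The plan is to argue by contraposition against Corollary~\ref{cor:conpr} (Conjecture~\ref{con:gk211}): I will show that if the inclusion of the quotients were \emph{not} dense, then the topological closure of $\wt G^{000}\cap A$ modulo $A_1$ would give a proper type-definable subgroup lying strictly between $A_1$ and $\wt G^{00}\cap A$, and this can be used to manufacture an invariant bounded-index subgroup of $\wt G$ violating the conclusion of Corollary~\ref{cor:conpr}.

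**The key steps.**
First I would let $D$ denote the closure of $\left(\wt G^{000}\cap A\right)/A_1$ in $\left(\wt G^{00}\cap A\right)/A_1$. Since the logic topology on $A/A_1$ makes it a compact Hausdorff topological group and the closure of a subgroup is a subgroup, $D$ is a closed subgroup; by the correspondence of the logic topology, its preimage $A_2\leq A$ under the quotient map $A\to A/A_1$ is a type-definable, $G$-invariant subgroup of $A$ with $A_1\leq A_2$ and $\wt G^{000}\cap A\subseteq A_2\subseteq \wt G^{00}\cap A$. Second, suppose toward a contradiction that the density fails, i.e.\ $D\neq\left(\wt G^{00}\cap A\right)/A_1$; then $A_2\subsetneq \wt G^{00}\cap A$. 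Third, I would apply Conjecture~\ref{con:gk210} (equivalently the $A$-part of Corollary~\ref{cor:conpr}) to the type-definable bounded-index $G$-invariant subgroup $A_2$: because $\wt G^{000}\cap A\subseteq A_2$, we have $\wt G^{000}\cap A\subseteq A_2$, and the conjecture's equivalence then forces $\wt G^{00}\cap A\subseteq A_2$ as well. This contradicts $A_2\subsetneq \wt G^{00}\cap A$, so the density must hold.

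**Supplying the equivalence.**
Concretely, the tool I lean on is the statement that for a type-definable bounded-index $G$-invariant $A'\leq A$ one has $\wt G^{00}\cap A\subseteq A'\iff \wt G^{000}\cap A\subseteq A'$, which is exactly Conjecture~\ref{con:gk210} and which follows from Corollary~\ref{cor:conpr} by taking $\wt H$ with $\wt H\cap A=A'$ (for instance $\wt H=A'\cdot\wt G^{000}$, whose intersection with $A\cap\wt G^{00}$ is type-definable precisely because $A'$ is). Applying this with $A'=A_2$ and using $\wt G^{000}\cap A\subseteq A_2$ yields $\wt G^{00}\cap A\subseteq A_2$, closing the argument.

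**The main obstacle.**
The delicate point is not the topological bookkeeping but verifying that $A_2$ genuinely satisfies the hypotheses needed to invoke the conjecture — namely that the closure $A_2$ is honestly type-definable (which is the content of the logic-topology correspondence recalled in the Preliminaries, since $A_2$ is a \emph{closed} set in $A/A_1$ whose preimage is type-definable with parameters), that it remains $G$-invariant (the $G$-action is by homeomorphisms of $A/A_1$, so it preserves closures of $G$-invariant sets), and that it still has bounded index (it contains $A_1$, which already has bounded index). Once these three properties of the closure are checked, the reduction to Conjecture~\ref{con:gk210} is immediate; I expect essentially all the genuine work to reside in confirming that taking the logic-topology closure preserves type-definability and $G$-invariance.
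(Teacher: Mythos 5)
Your argument is correct and follows essentially the same route as the paper: both take $A_2$ to be the (type-definable) preimage of the logic-topology closure of $(\wt G^{000}\cap A)/A_1$, form $\wt H = A_2\cdot \wt G^{000}$, and invoke Corollary~\ref{cor:conpr} to conclude $\wt G^{00}\cap A\subseteq A_2$. The only cosmetic difference is that you phrase the last step as a contradiction via the equivalence of Conjecture~\ref{con:gk210}, whereas the paper concludes directly.
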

\begin{proof}
Let $A_2$ be the preimage of the closure of $(\wt G^{000}\cap A)/A_1$ by the quotient map $\pi\colon A\to A/A_1$. 
Then $A_2$ is a type-definable (over $\emptyset$)
subgroup of $A$, which contains $\wt G^{000}\cap A$. 

Then, $\wt H:=A_2\cdot \wt G^{000}$ is an invariant subgroup of $\wt G$ of bounded index, and $\wt H\cap A=A_2$ is type-definable, as is $\wt H\cap A \cap \wt G^{00}$, so by Corollary~\ref{cor:conpr}, $A_2=\wt H\cap A\supseteq \wt G^{00}\cap A$, and therefore $A_2/A_1$  (the closure of $(\wt G^{000}\cap A)/A_1$ in $A/A_1$)  contains $(\wt G^{00}\cap A)/A_1$, which was to be shown.
\end{proof}

\section*{Acknowledgements}
We would like to thank Itay Kaplan for providing a short argument for a missing implication in Theorem~\ref{thm:mainB}.

\printbibliography
\end{document}